\newcommand{\inc}{\hookrightarrow}
\newcommand{\la}{\langle}
\newcommand{\ra}{\rangle}
\newcommand{\x}{\times}
\newcommand{\too}{\longrightarrow}
\newcommand{\bd}{\partial}
\newcommand{\SD}{{\mathcal{D}}}
\newcommand{\ZZ}{\mathbb{Z}}
\newcommand{\CC}{\mathbb{C}}
\newcommand{\CP}{\mathbb{C}P}
\newcommand{\RR}{\mathbb{R}}
\newcommand{\TT}{\mathbb{T}}
\newcommand{\QQ}{\mathbb{Q}}
\DeclareMathOperator{\lcm}{lcm}
\DeclareMathOperator{\Id}{Id}
\newcommand{\orb}{\scriptsize \mathrm{orb}}
\renewcommand{\a}{\alpha}
\renewcommand{\b}{\beta}
\newcommand{\g}{\gamma}
\newcommand{\e}{\epsilon}
\newcommand{\f}{\varphi}
\renewcommand{\l}{\lambda}
\newcommand{\s}{\sigma}
\renewcommand{\o}{\omega}
\newcommand{\p}{\phi}
\newcommand{\G}{\Gamma}
\renewcommand{\S}{\Sigma}
\newcommand{\D}{\Delta}
\newcommand{\cO}{\mathcal O}
\newcommand{\PP}{\mathbb P}
\newcommand{\set}[1]{\left\{#1\right\}}
\newtheorem{theorem}{Theorem}
\newtheorem{proposition}[theorem]{Proposition}
\newtheorem{lemma}[theorem]{Lemma}
\newtheorem{definition}[theorem]{Definition}
\newtheorem{corollary}[theorem]{Corollary}
\newtheorem{remark}[theorem]{Remark}
\newtheorem{conjecture}[theorem]{Conjecture}
\title[K-contact and not Sasakian simply connected $5$-manifold]{A K-contact 
simply connected $5$-manifold with no semi-regular Sasakian structure}
\author[A. Ca\~{n}as]{Alejandro Ca\~{n}as}
\address{Departamento de \'Algebra, Geometr\'{\i}a y Topolog\'{\i}a, Universidad de M\'alaga, 
Campus de Teatinos, s/n, 29071 M\'alaga, Spain}
\email{alejandro.cm.95@uma.es}
\author[V. Mu\~{n}oz]{Vicente Mu\~{n}oz}
\address{Departamento de \'Algebra, Geometr\'{\i}a y Topolog\'{\i}a, Universidad de M\'alaga, 
Campus de Teatinos, s/n, 29071 M\'alaga, Spain}
\email{vicente.munoz@uma.es}
\author[J. Rojo]{Juan Rojo}
\address{ETSI Ingenieros Inform\'aticos, Universidad Polit\'ecnica de Madrid,
Campus de Montegancedo, 28660, Madrid, Spain}
\email{juan.rojo.carulli@upm.es}
\author[A. Viruel]{Antonio Viruel}
\address{Departamento de \'Algebra, Geometr\'{\i}a y Topolog\'{\i}a, Universidad de M\'alaga, 
Campus de Teatinos, s/n, 29071 M\'alaga, Spain}
\email{viruel@uma.es}
\subjclass[2010]{53C25, 53D35, 57R17, 14J25}
\keywords{Sasakian, K-contact, Seifert bundle, Smale-Barden manifold, algebraic surface}
\begin{document}

\begin{abstract}
  We construct the first example of a $5$-dimensional simply connected compact manifold 
  that admits a K-contact structure but does not admit any semi-regular Sasakian structure.
  For this, we need two ingredients: (a) to construct a suitable simply connected symplectic 
  $4$-manifold with disjoint symplectic surfaces spanning the homology,
  all of them of genus $1$ except for one
  of genus $g>1$ ; (b) to prove a bound on the 
  second Betti number $b_2$ of an algebraic surface with $b_1=0$ and having 
  disjoint complex curves spanning the homology, all of them of genus $1$ except for one
  of genus $g>1$. 
\end{abstract}

\maketitle

%%%%%%%%%%%%%%%%
\section{Introduction}\label{sec:1}
%%%%%%%%%%%%%%%%

In geometry, it is a central question to determine when a given manifold admits an specific geometric structure. 
Complex geometry provides numerous examples of compact manifolds with rich topology, and there is a number
of topological properties that have to be satisfied by complex manifolds. For instance, compact K\"ahler manifolds
satisfy strong topologial properties like the hard Lefschetz property, the formality of its rational homotopy type \cite{DGMS}, or
restrictions on the fundamental group \cite{ABCKT}. A natural approach is to weaken the given structure and to ask
to what extent a manifold having the weaker structure may admit the stronger one. In the case of K\"ahler manifolds,
if we forget about the integrability of the complex structure, then we are dealing with symplectic manifolds. There has
been enormous interest in the construction of (compact) symplectic manifolds that do not admit K\"ahler structures,
and in determining its topological properties \cite{OT}. In dimension $4$, when we deal with complex
surfaces, we have the Enriques-Kodaira classification \cite{BPV} that helps in the understanding of this question.

In odd dimension, Sasakian and K-contact manifolds are analogues of K\"ahler and symplectic manifolds, respectively. 
Sasakian geometry has become an important and active subject, especially after the appearance of
the fundamental treatise of Boyer and Galicki \cite{BG}. Chapter 7 of this book contains an extended discussion of 
topological problems of Sasakian and K-contact manifolds. 

The precise definition of the structures that we are dealing with in this paper is as follows.
Let $M$ be a smooth manifold. A {\em K-contact structure} on $M$ consists of
tensors $(\eta,J)$ such that $\eta$ is a contact form $\eta\in \Omega^1(M)$, i.e. $\eta\wedge (d\eta)^n>0$ everywhere,
and $J$ is an endomorphism of $TM$ such that: 
 \begin{itemize}
\item $J^2=-\Id + \xi\otimes\eta$, where $\xi$ is the Reeb
vector field of $\eta$, $i_\xi \eta=1$, $i_\xi (d\eta)=0$,
\item $d\eta (J X,J Y)\,=\,d\eta (X,Y)$, for all vector fields $X,Y$, and
$d\eta (J X,X)>0$ for all nonzero $X\in \ker \eta$, 
\item the Reeb field $\xi$ is Killing with respect to the
Riemannian metric $g(X,Y)\,=\,d\eta (J X,Y)+\eta (X)\eta(Y)$.
\end{itemize}
We may denote $(\eta,J)$ or $(\eta,J,g,\xi)$ a K-contact structure on $M$,
since $g$ and $\xi$ are in fact determined by $\eta$ and $J$.
Note that the endomorphism $J$ defines a complex structure on $\SD=\ker \eta$ compatible with $d\eta$, 
hence $J$ is orthogonal with respect to the metric $g|_\SD$. By definition, the Reeb vector
field $\xi$ is orthogonal to $\SD$.
Finally, a \emph{K-contact manifold} is $(M,\eta, J ,g, \xi)$, a manifold $M$ endowed with a K-contact structure.
For a K-contact manifold $M$, the condition that the Reeb vector field be Killing with respect to the metric $g$
is very rigid and it imposes strong constraints on the topology.
In particular, it is not difficult to find manifolds that admit contact but do not admit K-contact structures
in any odd dimension, for instance the odd dimensional tori, see \cite[Corollary 7.4.2]{BG}.
For simply-connected $5$-manifolds (i.e.\ Smale-Barden manifolds), 
one can also find infinitely many of them admitting contact but not K-contact
structures, see \cite[Theorem 10.2.9 and Corolary 10.2.11]{BG}.

Just as for almost complex structures, there is the notion of
integrability of a K-contact structure. More precisely, a K-contact
structure $(\eta,J,g,\xi)$ is called \emph{normal} if the Nijenhuis
tensor $N_{J}$ associated to the tensor field $J$, defined by
 \begin{equation*}\label{NPhi}
 N_{J} (X, Y) = {J}^2 [X, Y] + [J X, J Y] -  J [ J X, Y] - J [X, J Y]\, ,
 \end{equation*}
satisfies the equation $N_{J} \,=\,-{d}\eta\otimes \xi$.
A {\it Sasakian structure} on $M$ is a normal K-contact structure $(\eta, J ,g, \xi)$, 
and we call $(M ,\eta, J ,g, \xi)$ a {\em Sasakian manifold}

Let $(M,\eta,J,g)$ be a K-contact manifold. Consider the cone as the Riemannian manifold
$C(M)=(M\times\mathbb{R}_{+},t^2g+dt^2)$.
One defines an almost complex structure $I$ on $C(M)$ by:
 \begin{itemize}
\item $I(X)=J(X)$ on $\ker\eta$,
\item $I(\xi)=t{\partial\over\partial t},\,I \left(t{\partial\over\partial t}\right)=-\xi$, 
for the Reeb vector field $\xi$ of $\eta$.
\end{itemize}
Then $(M,\eta,J,g)$ is Sasakian if and only if $I$ is integrable, that is, if $(C(M),I,t^2g + dt^2)$ is a K\"ahler manifold,
see \cite[Definition 6.5.15]{BG}.
%Accordingly, $(M,\eta,J,g)$ is K-contact if and only if $(C(M),I,t^2g + dt^2)$ is almost-K\"ahler.
%{\color{red} pero esto no lo veo en el libro enunciado tal cual; de todos modos, esto lo podemos quitar porque nos da igual.}

Slightly abusing notation, if we are given a smooth manifold $M$ with no specified contact structure, 
we will say that $M$ is K-contact (Sasakian) if it admits some K-contact (Sasakian) structure.
In this paper we will mainly be concerned with geography questions, i.e.\ which smooth manifolds admit K-contact or Sasakian structures.

In dimension $3$, every K-contact manifold admits a Sasakian structure \cite{JK}.
For dimension greater than $3$, there is much interest on constructing K-contact manifolds 
which do not admit Sasakian structures.
The odd Betti numbers up to degree $n$ of Sasakian $(2n+1)$-manifolds must be even. 
The parity of $b_1$ was used to produce the first examples of K-contact manifolds with no Sasakian structure
\cite[example 7.4.16]{BG}. In the case of even Betti numbers, more refined tools are needed 
to distinguish K-contact from Sasakian manifolds. 
The  cohomology algebra of a Sasakian manifold satisfies a hard Lefschetz property \cite{CNY}. 
Using it examples of K-contact non-Sasakian manifolds are produced in \cite{CNMY} in dimensions $5$ and $7$. 
These examples are nilmanifolds with even Betti numbers, so in particular they are not simply connected.

When one moves to simply connected manifolds, K-contact non-Sasakian 
examples of any dimension $\geq 9$ were constructed in \cite{HT} using the evenness of $b_3$
of a compact Sasakian manifold. Alternatively, using the hard Lefschetz property
for Sasakian manifolds there are examples \cite{Lin} of simply connected K-contact non-Sasakian
manifolds of any dimension $\geq 9$.
In \cite{BFMT,Tievsky} the rational homotopy type of Sasakian manifolds is studied.
All higher order Massey products for simply connected Sasakian manifolds vanish,
although there are Sasakian manifolds with non-vanishing triple Massey products \cite{BFMT}.
This yields examples of simply connected K-contact non-Sasakian manifolds in dimensions $\geq 17$. 
However, Massey products are not suitable for the analysis of lower dimensional manifolds.

The problem of the existence of simply connected K-contact non-Sasakian compact manifolds 
(open problem 7.4.1 in \cite{BG}) is still open in dimension $5$. It was solved for dimensions $\geq 9$  
in \cite{CNY,CNMY,HT} and for dimension $7$ in \cite{MT} by a combination of various techniques 
based on the homotopy theory and symplectic geometry. In the least possible dimension the problem appears to 
be much more difficult. 
A simply connected compact oriented $5$-manifold is called a {\it Smale-Barden manifold}. 
These manifolds are classified topologically by $H_2(M,\mathbb{Z})$ and the second Stiefel-Whitney class, 
see \cite{Barden, Smale} for the classification by Smale and Barden.
Chapter 10 of the book \cite{BG} by Boyer and Galicki is devoted to a description of some Smale-Barden manifolds which 
carry Sasakian structures. The following problem is still open \cite[open problem 10.2.1]{BG}. 

{\it Do there exist Smale-Barden manifolds which carry K-contact but do not carry Sasakian structures?}

In the pioneering work \cite{MRT}, it is done a first step towards a positive answer to the question.
A homology Smale-Barden manifold is a compact $5$-dimensional manifold with $H_1(M,\mathbb{Z})=0$. 
A Sasakian structure is \emph{regular} if the leaves of the Reeb flow are a folation by circles with the
structure of a circle bundle over a smooth manifold. The Sasakian structure is \emph{quasi-regular} if the foliation
is a Seifert circle bundle over a (cyclic) orbifold, and it is \emph{semi-regular} if the base orbifold has only locus of non-trivial isotropy of codimension $2$, i.e.\ its underlying space is a topological manifold. 
Recall that the isotropy locus of an orbifold is the subset of points with non-trivial isotropy group.
It is a remarkable
result, although not difficult, that any manifold admitting a Sasakian structure has also a quasi-regular Sasakian structure (in any odd dimension).
Therefore, a Sasakian manifold is a Seifert bundle over a cyclic K\"ahler orbifold \cite{MRT}.
 
Correspondingly, for K-contact manifolds we also define regular, quasi-regular and semi-regular K-contact structures
with the same conditions. Any K-contact manifold admits a quasi-regular K-contact structure by \cite[Theorem 7.1.10]{BG} and
\cite{Ruk}.
%{\color{red}, aquí antes estaba mal puesta la referencia \cite{MT}, he puesto las del libro de Boyer-Galicki pero no sé si en \cite{MT2} también está probado}.
Hence, a K-contact manifold is a Seifert bundle over a cyclic symplectic orbifold. 
Such orbifold has a isotropy locus which is a (stratified) collection of symplectic sub-orbifolds. 
The K-contact structure is semi-regular if the
symplectic orbifold has isotropy locus of codimension $2$. The main result of \cite{MRT} is: 

\begin{theorem}[\cite{MRT}] \label{thm:11}
There exists a homology Smale-Barden manifold which admits a semi-regular K-contact structure but which 
does not carry any semi-regular Sasakian structure.
\end{theorem}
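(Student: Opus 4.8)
The plan is to realize the required manifold as the total space of a Seifert $S^1$-bundle over a symplectic $4$-orbifold, using the dictionary recalled above: a semi-regular K-contact $5$-manifold is a Seifert bundle over a cyclic symplectic $4$-orbifold whose isotropy locus is a disjoint union of codimension-$2$ symplectic surfaces, whereas a semi-regular Sasakian structure would force the base to be a cyclic K\"ahler orbifold, that is, an algebraic surface carrying the corresponding configuration of disjoint complex curves. The whole argument then splits into an existence statement on the symplectic side and a non-existence statement on the algebraic side, the two being separated by the value of $b_2$ of the base orbifold.

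First I would carry out ingredient (a): construct a simply connected symplectic $4$-manifold $X$ together with disjoint embedded symplectic surfaces $D_1,\dots,D_k$ whose classes span $H_2(X;\QQ)$, all of genus $1$ except one of genus $g>1$, and with $b_2(X)$ as large as we please. A natural source is a symplectic sum or blow-up construction (for instance starting from an elliptic surface) producing many disjoint tori that give a spanning set, together with a single surface of higher genus accounting for the remaining homology; one then arranges the symplectic form so that all the $D_i$ are symplectic and $\pi_1(X)=0$.

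Next I would build the Seifert bundle itself. Prescribing multiplicities $m_i$ along the $D_i$ and an orbifold Euler class produces a $5$-manifold $M\to X$ carrying a semi-regular K-contact structure by the Seifert-bundle construction over a symplectic orbifold. The homology of $M$ is governed by the orbifold Gysin sequence: choosing the Euler class to have nonzero rational part, and the local data compatible in the sense of Koll\'ar, forces $H_1(M;\ZZ)=0$, so that $M$ is a homology Smale-Barden manifold, while rationally $b_2(M)=b_2(X)-1$ remains large.

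The non-existence half is ingredient (b), and it is the main obstacle. If $M$ carried a semi-regular Sasakian structure, it would be a Seifert bundle over an algebraic surface $S$ with $b_1(S)=0$ containing disjoint smooth complex curves $C_1,\dots,C_k$ of the prescribed genera spanning $H_2(S;\QQ)$, and the same Gysin computation would give $b_2(S)=b_2(M)+1$. I would bound $b_2(S)$ by an explicit function of $g$ as follows. Since the algebraic classes $[C_i]$ lie in $H^{1,1}$ and span $H_2(S;\QQ)$, the Lefschetz $(1,1)$-theorem forces the geometric genus $p_g$ to vanish; together with the vanishing of the irregularity $q$ (as $b_1(S)=0$) this gives $\chi(\cO_S)=1$, and Noether's formula $12\chi(\cO_S)=K_S^2+c_2(S)$ then yields $K_S^2=10-b_2(S)$. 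Hence bounding $b_2(S)$ is the same as bounding $K_S^2$ from below. The configuration enters through the adjunction formula $2g(C_i)-2=C_i^2+K_S\cdot C_i$ (which for the genus-$1$ curves reads $C_i^2=-K_S\cdot C_i$) and through the Hodge index theorem, which, applied to the mutually orthogonal classes of the disjoint curves, permits at most one of positive self-intersection and forces the rest to be non-positive. Running these constraints through the Enriques-Kodaira classification to control $K_S$ according to the Kodaira dimension of $S$ bounds the number $k$ of spanning curves, hence $b_2(S)$. Choosing $b_2(X)$ in step (a) strictly larger than this bound finally contradicts $b_2(S)=b_2(X)$, so the resulting $M$ admits a semi-regular K-contact structure but no semi-regular Sasakian one.
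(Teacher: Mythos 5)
Your overall skeleton --- existence via a Seifert bundle over a symplectic $4$-orbifold, reduction of a hypothetical semi-regular Sasakian structure to a configuration of disjoint complex curves spanning the homology of an algebraic surface with $b_1=0$, and a numerical clash between $b_2$ and a genus bound --- is exactly the architecture of the paper. The problem is that both load-bearing ingredients are asserted with sketches that would fail, and they are precisely where the entire difficulty of the theorem lies. For ingredient (a), you propose getting ``many disjoint tori that give a spanning set'' from an elliptic surface via symplectic sums and blow-ups. This cannot work as stated: disjoint tori in an elliptic fibration are fibers or multiple fibers, hence rationally proportional in homology, so they never span; more generally, disjoint surfaces spanning $H_2(X,\QQ)$ diagonalize the intersection form, forcing all but one to have negative self-intersection while remaining symplectic and of positive genus. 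The paper emphasizes that the configurations here and in \cite{MRT} are the \emph{first} examples of this phenomenon (Conjecture \ref{conj:Mu} asserts it is impossible in the K\"ahler category), and producing them is the bulk of both papers: in \cite{MRT} a Gompf-sum manifold with $b_2=36$ and genera in $\{1,2,3\}$, and here the symplectic plumbing realization of eleven ``symplectic cubics'' through ten of eleven points plus a degree-$10$ genus-$3$ curve in $\CP^2\#11\overline{\CP}{}^2$. Relatedly, your claim that the hypothetical Sasakian base carries curves ``of the prescribed genera'' silently uses the torsion bookkeeping: one must choose pairwise non-coprime multiplicities (e.g.\ $m_i=p^i$) so that, by Theorem \ref{thm:12}, $H_2(M,\ZZ)=\ZZ^k\oplus\bigoplus(\ZZ/m_i)^{2g_i}$ determines --- from the topology of $M$ alone, via Corollary \ref{cor:18} --- the number and genera of the isotropy surfaces of \emph{any} semi-regular Seifert structure on $M$; with coprime multiplicities the cyclic summands merge and the genera are no longer pinned down.

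For ingredient (b), your opening moves ($p_g=0$, $q=0$, $\chi(\cO_S)=1$, Noether's formula, adjunction, Hodge index) coincide with the start of Section \ref{sec:5}, but ``running these constraints through the Enriques--Kodaira classification'' is not an argument, and no bound on $b_2$ follows from those formulas alone. The actual proof must first show that the unique curve of positive self-intersection is the higher-genus one; the paper does this by showing that a genus-$1$ curve $D$ with $D^2>0$ satisfies $h^0(D)\geq 2$, hence moves in a pencil giving an elliptic fibration in which all the other disjoint curves would have to lie in fibers, contradicting the presence of a curve of genus $g>1$. It then runs a delicate analysis of the linear system $|K_S+D_1|$ --- its fixed and free parts, forced base points on the genus-$1$ curves of self-intersection $-1$, and Clifford's theorem --- to reach the explicit bound. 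Note also a hypothesis mismatch you cannot cite your way around: the bound of \cite{MRT} quoted as Theorem \ref{thm:MRT1} requires at least two curves of genus $>1$ (and all genera $\leq 3$), so it does not apply to your configuration with a single higher-genus curve; the bound your strategy needs is exactly the new Theorem \ref{thm:conj2}, whose proof is the content of Section \ref{sec:5} and is absent from your proposal.
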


The construction of \cite{MRT} relies upon subtle obstructions to admit Sasakian structures in 
dimension $5$ found by Koll\'ar \cite{Kollar}. If a $5$-dimensional manifold $M$  has a Sasakian structure, then 
it is a Seifert bundle over a K\"ahler orbifold $X$ with isotropy locus a collection of complex curves $D_i$
with isotropy (multiplicity) $m_i$. We
have the following topological characterization of the homology of $M$ in terms of that of $X$.

\begin{theorem}[{\cite[Theorem 16]{MRT}}] \label{thm:12}
Suppose that $\pi:M\to X$ is a semi-regular Seifert bundle with isotropy surfaces $D_i$ with multiplicities $m_i$. 
Then $H_1(M,\ZZ)=0$ if and only if
 \begin{enumerate}
 \item $H_1(X,\ZZ)=0$;
 \item the map $H^2(X,\ZZ)\to \oplus_i H^2(D_i,\ZZ/m_i)$  induced by the inclusions $D_i \subset X$, is surjective;
 \item the Chern class $c_1(M/e^{2\pi i/\mu}) \in H^2(X,\ZZ)$ 
 of the circle bundle $M/e^{2\pi i/\mu}$ is a primitive element, where $\mu$ is the lcm of all $m_i$.
 \end{enumerate}
 Moreover, $H_2(M,\ZZ)=\ZZ^k\oplus \bigoplus (\ZZ/m_i)^{2g_i}$, $g_i=$genus of $D_i$, $k+1=b_2(X)$.
\end{theorem}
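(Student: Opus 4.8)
The plan is to read off $H_1(M)$ and $H_2(M)$ from the $S^1$-Seifert structure of $\pi\colon M\to X$ by a Mayer--Vietoris argument, organized around the genuine principal circle bundle $W=M/e^{2\pi i/\mu}\to X$ and the local contributions of the multiple fibres over the surfaces $D_i$. First I would choose disjoint tubular neighborhoods $\nu_i$ of the $D_i$ in the underlying manifold $X$, put $X^0=X\setminus\bigcup_i\nu_i$, and pull these back to $M^0=\pi^{-1}(X^0)$ and $M_i=\pi^{-1}(\bar\nu_i)$. Over $X^0$ the projection is an honest principal $S^1$-bundle; each $M_i$ deformation retracts onto the circle bundle $P_i=\pi^{-1}(D_i)\to D_i$ whose fibre is the short (multiple) fibre; and the overlaps $M^0\cap M_i$ retract onto the $2$-torus bundles $\pi^{-1}(\partial\nu_i)\to D_i$ spanned by the Seifert fibre and the normal circle of $D_i$. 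The crucial local input is the standard model $(\CC\times S^1)/(\ZZ/m_i)$ of a neighborhood of a multiplicity-$m_i$ fibre, in which the regular fibre is homologous to $m_i$ times the central short fibre; this is the geometric origin of every $m_i$-torsion class.

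Next I would run the ordinary Gysin sequence of $W\to X$. Writing $c=c_1(W)=c_1(M/e^{2\pi i/\mu})\in H^2(X;\ZZ)$, it yields, once $H_1(X;\ZZ)=0$ (so that $H^2(X;\ZZ)$ is torsion-free), the identifications $H^1(W)=\ker(\cup c\colon H^0(X)\to H^2(X))$ and $H^2(W)=H^2(X;\ZZ)/\la c\ra$. By the universal coefficient theorem this already shows that $H_1(W)=0$ is equivalent to condition \emph{(1)} together with primitivity of $c$: the free part of $H_1(W)$ dies exactly when $c$ is non-torsion, and its torsion part, being the torsion of $H^2(X;\ZZ)/\la c\ra$, vanishes exactly when $c$ spans a direct summand, i.e.\ condition \emph{(3)}. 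Since $M\to W$ is a quotient by the finite group $\ZZ/\mu\subset S^1$ acting through the connected group $S^1$, it induces an isomorphism on rational cohomology, so $b_2(M)=b_2(W)=b_2(X)-1=k$, which will account for the free summand $\ZZ^k$ of $H_2(M)$.

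It then remains to compare $M$ with $W$ and to extract condition \emph{(2)} together with the torsion of $H_2(M)$ from the local models. Feeding this into the Mayer--Vietoris sequence of $M=M^0\cup\bigcup_iM_i$, a loop $\delta$ representing a generator of a summand of $H_1(D_i)=\ZZ^{2g_i}$, swept along the short fibre, gives a $2$-torus $T_\delta\subset P_i\subset M$. Using that the regular fibre is homologous to $m_i$ times the short fibre, and that by \emph{(1)} a push-off $\delta'\subset X^0$ of $\delta$ bounds a surface $\Sigma\subset X$ whose preimage realizes $\delta'\times(\text{regular fibre})$ as a boundary, one obtains $m_iT_\delta=0$ in $H_2(M)$ while no smaller multiple vanishes; this produces the summand $\bigoplus_i(\ZZ/m_i)^{2g_i}$ and hence the asserted form of $H_2(M)$. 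Dually, the relations carried by the normal circles of the $D_i$ identify the torsion subgroup of $H_1(M)$ with the cokernel of the restriction map $H^2(X;\ZZ)\to\bigoplus_iH^2(D_i;\ZZ/m_i)$, whose target summands are each $\ZZ/m_i$; thus $H_1(M)$ is torsion-free exactly when this map is onto, which is condition \emph{(2)}. Assembling the three vanishing statements yields $H_1(M)=0$ if and only if \emph{(1)}, \emph{(2)} and \emph{(3)} hold.

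The main obstacle is the bookkeeping inside the Mayer--Vietoris step: one must simultaneously control the images of the fibre class, the normal circles, and the lifted $1$-cycles of each $D_i$ in both $H_*(M^0)$ and $H_*(M_i)$, and prove that the torsion they create has order exactly $m_i$ and is detected precisely by $H^2(D_i;\ZZ/m_i)$. This is where semi-regularity is indispensable: it forces the isotropy locus to be a disjoint union of smooth surfaces with product normal neighborhoods, so that the local lens-space models glue cleanly, with no higher-codimension strata, and the contributions of the different $D_i$ remain independent.
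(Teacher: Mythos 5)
First, a point of comparison: the paper does not prove Theorem \ref{thm:12} at all; it is quoted verbatim from \cite[Theorem 16]{MRT}, where it is established sheaf-theoretically (via the Leray spectral sequence of $\pi$ and the computation of $R^1\pi_*\ZZ$ as the subsheaf of $\ZZ_X$ of sections divisible by $m_i$ along $D_i$, in the style of Koll\'ar \cite{Kollar}). So your Mayer--Vietoris/Gysin route is in any case different from the source's argument, and it must be judged on its own terms.

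On its own terms it has genuine gaps, and they sit exactly where the content of the theorem lies. What you actually carry out --- the Gysin computation for the honest circle bundle $W=M/e^{2\pi i/\mu}$, giving $H_1(W)=0$ if and only if (1) and (3), and the local model fact that a regular fibre is homologous to $m_i$ times the short fibre --- is the routine part. The crucial claims are asserted, not proved: that $[T_\delta]$ has order \emph{exactly} $m_i$; that these tori account for \emph{all} the torsion of $H_2(M)$ and that the resulting extension splits as $\ZZ^k\oplus\bigoplus(\ZZ/m_i)^{2g_i}$ rather than being a nontrivial extension; and that the torsion of $H_1(M)$ is precisely $\coker\bigl(H^2(X,\ZZ)\to\bigoplus_i H^2(D_i,\ZZ/m_i)\bigr)$. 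Your final paragraph explicitly defers this ``bookkeeping,'' but that bookkeeping \emph{is} the theorem. Moreover, the bridge between your two halves is missing: you characterize $H_1(W)=0$, but the statement concerns $H_1(M)$, and passing from one to the other is not formal --- a quotient of a manifold with $H_1=0$ by a finite subgroup of $S^1$ can perfectly well acquire torsion in $H_1$ (lens spaces), so the assembly ``$H_1(M)=0$ iff $H_1(W)=0$ and (2)'' is itself a claim that needs proof and is essentially equivalent to the theorem. Finally, one step fails as written: the surface $\Sigma\subset X$ bounded by the push-off $\delta'$ will in general meet the $D_j$, so $\pi^{-1}(\Sigma)$ is not a manifold with boundary the single torus $\pi^{-1}(\delta')$; one must first express $[\delta']\in H_1(X^0)$ as a combination of meridian circles of the $D_j$ and then use that the tori over meridians bound the local solid-torus pieces. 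This is fixable, but it illustrates that the geometric argument requires considerably more care than the sketch provides.
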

Recall that an element $x$ of a $\ZZ$-module is called \emph{primitive} if it
is not of the form $x=ny$ for some integer $n>1$.

\begin{corollary}[{\cite[Corollary 18]{MRT}}] \label{cor:18}
Suppose that $M$ is a $5$-manifold with $H_1(M,\ZZ)=0$ and 
$H_2(M,\ZZ)=\ZZ^k\oplus \bigoplus_{i=1}^{k+1} (\ZZ/p^i)^{2g_i}$, $k\geq 0$, $p$ a prime, and $g_i\geq 1$. 
If $M\to X$ is a semi-regular Seifert bundle, then $H_1(X,\ZZ)=0$, $H_2(X,\ZZ)=\ZZ^{k+1}$, and 
the ramification locus has $k+1$ disjoint surfaces $D_i$ linearly independent in rational homology, and
of genus $g(D_i)= g_i$.
\end{corollary}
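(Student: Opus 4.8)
The plan is to read off every conclusion from Theorem~\ref{thm:12} applied to the given Seifert bundle $\pi\colon M\to X$, by matching the computed homology of $M$ against the prescribed groups. Write $D_1,\dots,D_N$ for the isotropy surfaces, with multiplicities $m_j$ and genera $h_j=g(D_j)$. Condition (1) of Theorem~\ref{thm:12} immediately gives $H_1(X,\ZZ)=0$, and comparing the free ranks in $H_2(M,\ZZ)=\ZZ^{b_2(X)-1}\oplus\bigoplus_j(\ZZ/m_j)^{2h_j}$ with the hypothesis forces $b_2(X)=k+1$. To upgrade this to $H_2(X,\ZZ)=\ZZ^{k+1}$ I would use that $X$ is semi-regular: its isotropy sits in codimension $2$, so the underlying space is a closed oriented topological $4$-manifold. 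Poincar\'e duality together with the universal coefficient theorem then gives $\operatorname{Tors}H_2(X,\ZZ)\cong\operatorname{Tors}H^3(X,\ZZ)\cong\operatorname{Tors}H_1(X,\ZZ)=0$, so $H_2(X,\ZZ)$ is free of rank $k+1$.

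For the ramification locus I would match the torsion subgroups. By Theorem~\ref{thm:12} the torsion of $H_2(M,\ZZ)$ is $\bigoplus_j(\ZZ/m_j)^{2h_j}$, and by hypothesis it equals the purely $p$-primary group $\bigoplus_{i=1}^{k+1}(\ZZ/p^i)^{2g_i}$ with all $g_i\ge 1$. Uniqueness of the primary decomposition of a finite abelian group forces every surface with $h_j\ge 1$ to have $m_j$ a power of $p$, say $m_j=p^{f_j}$ (a factor of $m_j$ prime to $p$ would contribute non-$p$ torsion), and, counting the cyclic summands of each order $p^i$, gives $\sum_{j:f_j=i}h_j=g_i\ge 1$ for every $i=1,\dots,k+1$. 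Hence there are at least $k+1$ surfaces carrying $p$-torsion. On the other hand, condition (2) says $H^2(X,\ZZ)\cong\ZZ^{k+1}$ surjects onto $\bigoplus_j\ZZ/m_j$; reducing modulo $p$ shows $\#\{j:p\mid m_j\}=\dim_{\mathbb{F}_p}\bigl((\bigoplus_j\ZZ/m_j)\ox\mathbb{F}_p\bigr)\le k+1$. The two inequalities collapse to equality: there are exactly $k+1$ surfaces with $p\mid m_j$, exactly one of each multiplicity $p,p^2,\dots,p^{k+1}$, of genus $g_1,\dots,g_{k+1}$ respectively.

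It remains to prove that these $k+1$ surfaces are linearly independent in $H_2(X,\QQ)$, and this I expect to be the crux. The plan is to exploit the surjectivity in condition (2) at the level of $\mathbb{F}_p$ coefficients. The map $H^2(X,\ZZ)\to\bigoplus_i H^2(D_i,\ZZ/p^i)=\bigoplus_i\ZZ/p^i$ is, on each factor, $\alpha\mapsto(\alpha\cdot[D_i]\bmod p^i)$. A surjection onto this $p$-group is already surjective after reduction modulo $p$ (Nakayama), so the $(k+1)\times(k+1)$ integer matrix of intersection numbers $(\alpha_s\cdot[D_i])$, for a basis $\alpha_s$ of $H^2(X,\ZZ)$, has full rank over $\mathbb{F}_p$, hence nonzero determinant over $\QQ$. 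Any rational relation $\sum c_i[D_i]=0$ would make the columns of this matrix dependent; full rank rules this out, giving linear independence, and since $b_2(X)=k+1$ these surfaces span $H_2(X,\QQ)$.

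The main obstacle, and the step I would treat most carefully, is this last rational-independence argument: one must verify that the restriction map appearing in condition (2) is genuinely the mod-$m_i$ reduction of intersection with $[D_i]$, and that a possible extra stratum of genus-$0$ isotropy surfaces of multiplicity prime to $p$—which the homological data alone does not exclude—does not disturb the count of the essential surfaces. The Poincar\'e-duality step for the orbifold's underlying manifold is the other place where semi-regularity enters essentially and should be justified rather than assumed.
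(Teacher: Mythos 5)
Your proof is correct: every step (reading $H_1(X,\ZZ)=0$ and $b_2(X)=k+1$ off Theorem \ref{thm:12}, killing torsion in $H_2(X,\ZZ)$ via Poincar\'e duality for the underlying topological $4$-manifold, matching $p$-primary decompositions to pin down the multiplicities $p^i$ and genera $g_i$, and using surjectivity in condition (2) mod $p$ to get an invertible intersection matrix, hence rational independence) is sound, and the caveat you flag about possible extra genus-$0$ isotropy surfaces of multiplicity prime to $p$ is handled correctly, since the statement only asserts that the ramification locus \emph{contains} the $k+1$ surfaces. The paper itself quotes this corollary from \cite{MRT} without reproducing a proof, and your derivation from Theorem \ref{thm:12} is exactly the intended route, so there is nothing further to compare.
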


In \cite[Theorem 23]{MRT}, the authors construct a symplectic $4$-dimensional orbifold with 
disjoint symplectic surfaces spanning the second homology. This is the first example of such phenomenon and has $b_2=36$.
The genera of the isotropy surfaces satisfy $1\leq g_i\leq 3$, with several of them with genus $3$.
Using this symplectic orbifold $X$, we obtain a semi-regular K-contact $5$-manifold $M$ with 
 \begin{equation}\label{eqn:homol}
H_1(M,\ZZ)=0, \qquad H_2(M,\ZZ)=\ZZ^{35}\oplus \bigoplus_{i=1}^{36} (\ZZ/p^i)^{2g_i}.
 \end{equation}

For understanding the Sasakian side, in \cite{MRT} it is proved the following result

\begin{theorem}[{\cite[Theorem 32]{MRT}}] \label{thm:MRT1}
Let $S$ be a smooth K\"ahler surface with $H_1(S,\QQ)=0$ and 
containing $D_1,\ldots, D_b$, $b=b_2(S)$, smooth disjoint complex curves with $g(D_i)=g_i>0$, and spanning $H_2(S,\QQ)$. Assume that: 
 \begin{itemize}
 \item at least two $g_i$ are $>1$,
 \item $1\leq g_i\leq 3$.
 \end{itemize}
Then $b\leq 2\max\{g_i\}+3$.
\end{theorem}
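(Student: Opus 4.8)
The plan is first to squeeze the topology as hard as possible and then to feed the result into surface classification. Since the $D_i$ are disjoint, $D_i\cdot D_j=0$ for $i\ne j$, so the intersection form is diagonal in the basis $[D_1],\dots,[D_b]$ with entries $D_i^2$; as the cup product is nondegenerate, every $D_i^2\ne 0$. Each class $[D_i]$ is of type $(1,1)$ and together they span $H^2(S,\QQ)$, so $H^{2,0}(S)=0$, i.e.\ $p_g=0$; with $H_1(S,\QQ)=0$ this gives $q=0$ and $\chi(\cO_S)=1$. The Hodge index theorem then forces $b^+=2p_g+1=1$, so exactly one curve, say $D_1$, has $D_1^2>0$ while $D_2,\dots,D_b$ have negative self-intersection. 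Using $e(S)=2+b$, Noether's formula $12\chi(\cO_S)=K^2+e$ yields
\begin{equation*}
K^2=10-b,
\end{equation*}
so the assertion is equivalent to the lower bound $K^2\ge 7-2\max_i g_i$.

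Next I would let adjunction record everything numerically. Writing $d_i=D_i^2$ and $k_i=K\cdot D_i=2g_i-2-d_i$, diagonality gives $K=\sum_i(k_i/d_i)[D_i]$ and $K^2=\sum_i k_i^2/d_i$. A pleasant feature is that the coefficient of a genus-$1$ curve in $K$ is exactly $-1$, so $K+\sum_{g_i=1}D_i$ lies in the span of the genus $\ge 2$ curves and is orthogonal to all genus-$1$ curves, while $\bigl(K+\sum_j D_j\bigr)\cdot D_i=2g_i-2\le 2\max_i g_i-2$ for every $i$. I would combine these identities with the index inequality $\alpha^2\,D_1^2\le(\alpha\cdot D_1)^2$ (valid because $b^+=1$ and $D_1^2>0$), applied to $\alpha=K$ and to $\alpha=K+\sum_j D_j$, to convert the bound $g_i\le 3$ into control on the self-intersections $d_i$ and hence on $K^2$.

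The essential point, however, cannot be purely numerical: a single genus-$1$ curve of self-intersection $-m$ contributes $-m$ to $K^2=\sum_i k_i^2/d_i$ and $+1$ to $b$, so arbitrarily large $b$ is numerically consistent with $K^2=10-b$ (this is exactly why a symplectic $4$-manifold can realize $b_2=36$ while a K\"ahler surface cannot). I would therefore exploit that $D_1$, being irreducible with $D_1^2>0$, is nef and big, and that its orthogonal complement $\langle D_2,\dots,D_b\rangle$ is negative definite. The disjoint smooth curves $D_2,\dots,D_b$ of negative self-intersection can be contracted (Grauert) to a normal projective surface $f\colon S\to S'$ with Picard number $\rho(S')=1$ and $b-1$ singular points, the $j$-th obtained by collapsing the genus-$g_j$ curve $D_j$. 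Computing $R^1 f_*\cO_S$ gives $q(S')=0$ and
\begin{equation*}
p_g(S')=\sum_{j\ge 2}p_g(\mathrm{sing}_j)\ge\sum_{j\ge 2}g_j\ge b-1,
\end{equation*}
since collapsing a smooth genus-$g$ curve produces a singularity of geometric genus at least $g$. I would then play this lower bound for $p_g(S')$ against the rigidity of surfaces of Picard number one (whose ample generator $f_*D_1$ has arithmetic genus bounded by $g_1\le 3$) to force $b$ down to $2\max_i g_i+3$.

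The step I expect to be the real obstacle is precisely this last one, and within it the genus-$1$ curves specifically. They have discrepancy exactly $-1$, produce the mildest (simple-elliptic/cusp-type) singularities, and are invisible to the adjunction class $K+\sum_j D_j$; bounding how many such disjoint curves can coexist while $S'$ keeps Picard number one is exactly what the numerical identities fail to detect. Controlling them seems to require the Kodaira--Enriques classification together with effective/nef-cone positivity on $S$ (equivalently, a geography bound for the singular surface $S'$), and it is here that the hypotheses $g_i\le 3$ and ``at least two $g_i>1$'' must enter in an essential way.
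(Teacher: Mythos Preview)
This theorem is cited from \cite{MRT} and is not proved in the present paper; there is therefore no proof here to compare against. The only hint the paper gives about the original argument is the remark, in the alternative proof at the end of Section~\ref{sec:5}, that once the positive curve satisfies $D_1^2\ge 2g+1$, the method of \cite[Theorem~32]{MRT} gives $b\le 2g+3$ directly.

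The paper does, however, prove the closely related Theorem~\ref{thm:a-g} (all curves of genus $1$ except one of genus $g$), and its method is rather different from what you propose. After the same numerical set-up you carry out correctly in your first paragraph ($p_g=q=0$, $\chi(\cO_S)=1$, $K_S^2=10-b$, exactly one $D_i^2>0$), the paper studies the linear system $|K_S+D_1|$: the adjunction sequence gives $h^0(K_S+D_1)=g$, the fixed part $Z$ is shown to be supported on the negative curves $D_2,\dots,D_b$, and then the inequality $(K_S+D_1-Z)^2\ge 0$ for the movable part, sharpened by a count of base points forced on the genus-$1$ curves with $D_j^2=-1$, yields the bound. No contraction and no singular model is used.

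Your proposed route---contract $D_2,\dots,D_b$ via Grauert to a normal surface $S'$ with $\rho(S')=1$ and bound $b$ through $p_g(S')$---is a genuinely different idea, but as you yourself concede it is incomplete, and the gap is substantive. The step you label ``the real obstacle'' is not a technicality: there is no general upper bound on $p_g$ (or on the number of singular points) of a normal projective surface with Picard number one in terms of the arithmetic genus of an ample generator, so the plan to ``play this lower bound for $p_g(S')$ against the rigidity of surfaces of Picard number one'' has no content as stated. Moreover, neither of the two hypotheses---that at least two $g_i$ exceed $1$, and that all $g_i\le 3$---is actually used anywhere in your outline; they appear only in the final sentence as things that ``must enter''. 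As written, the proposal is an honest sketch of the easy preliminary reductions followed by a strategy whose decisive step is missing.
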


As a Corollary \cite[Proposition 31]{MRT}, there is no Sasakian semi-regular $5$-dimensional manifold with 
homology given by (\ref{eqn:homol}). So $M$ is $K$-contact, but does not admit any semi-regular Sasakian structure,
proving Theorem \ref{thm:11}.

Theorem \ref{thm:MRT1} is a result in accordance with the following conjecture
from \cite{MRT}:

\begin{conjecture} \label{conj:Mu}
  There does not exist a K\"ahler manifold or a K\"ahler orbifold $X$ with $b_1=0$ and with
  $b_2 \geq 2$ having disjoint complex curves spanning $H_2(X,\QQ)$, all of genus $g\geq 1$.
\end{conjecture}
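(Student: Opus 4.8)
\emph{The plan is to argue by contradiction}, converting the hypotheses into rigid numerical data through Hodge theory and the Enriques--Kodaira classification; the orbifold case is treated with the corresponding orbifold invariants, so I describe the smooth surface case. Suppose $X$ is a K\"ahler surface with $b_1=0$ carrying pairwise disjoint smooth curves $D_1,\dots,D_b$ of genus $g_i\ge 1$ spanning $H_2(X,\QQ)$, with $b=b_2(X)\ge 2$. First I would record the two consequences of the hypotheses that pin down the analytic type of $X$. Since $b_1=0$ we have $q=h^{0,1}=0$. Since the classes $[D_i]$ are of type $(1,1)$ and span $H^2(X,\QQ)$, all of $H^2(X,\CC)$ is of type $(1,1)$, whence $p_g=h^{2,0}=0$ and $X$ is projective. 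Thus $\chi(\mathcal O_X)=1$, and Noether's formula together with $c_2(X)=2+b$ yields the identity $K_X^2=10-b$.

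Next I would extract the intersection-theoretic skeleton. Disjointness forces $D_i\cdot D_j=0$ for $i\ne j$, so the $[D_i]$ form an orthogonal $\QQ$-basis with Gram matrix $\mathrm{diag}(D_1^2,\dots,D_b^2)$, and nondegeneracy of the cup product gives $D_i^2\ne 0$. By the Hodge index theorem the form has signature $(1,b-1)$, so exactly one curve, say $D_1$, has $D_1^2>0$ while $D_i^2<0$ for $i\ge 2$. Writing $K_X=\sum_i a_i D_i$ and combining adjunction $K_X\cdot D_i=2g_i-2-D_i^2$ with orthogonality gives $a_i=(2g_i-2-D_i^2)/D_i^2$; for $i\ge2$ the numerator is positive and the denominator negative, so $a_i<0$ and $K_X\cdot D_i>0$. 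This already disposes of Kodaira dimension $0$: there $K_X$ is numerically trivial, so adjunction would force $2g_i-2=D_i^2<0$ for $i\ge2$, contradicting $g_i\ge1$.

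It then remains to exclude $\kappa(X)\in\{-\infty,1,2\}$. For general type I would pass to the minimal model, where $K$ is nef and big and $1\le K^2\le 9$, handling the blow-downs via the fact that the exceptional $(-1)$-curves have genus $0$ and so are distinct from the $D_i$; one then combines the reverse Cauchy--Schwarz inequality $(K_X\cdot D_1)^2\ge K_X^2\,D_1^2$ in the positive cone with $K_X^2=10-b$ and the adjunction values to constrain $(g_1,D_1^2,b)$ to a short list shown to be empty, the negative summand $-\sum_{i\ge2}a_iD_i$ of $K_X$ moreover being forced to be a Zariski negative part. A useful preliminary reduction is that if $g_1=1$ then $K_X\cdot D_1=-D_1^2<0$ with $D_1^2>0$, so by Riemann--Roch $|D_1|$ moves in a covering family of $K$-negative curves and $X$ is uniruled, hence rational; thus whenever the distinguished positive curve has genus one the problem collapses to the rational case.

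\emph{The main obstacle is precisely the low Kodaira dimension cases} $\kappa=-\infty$ (rational) and $\kappa=1$ (properly elliptic), where $K_X$ is not positive and a single inequality no longer closes; here the plan is to exploit the global geometry. In the elliptic case $K_X\equiv\lambda F$ with $F$ the fiber class, $F^2=0$, $\lambda>0$, and every vertical curve of positive genus is proportional to $F$; since $F^2=0$ but each $D_i^2\ne0$, all $b=10$ basis curves must be pairwise disjoint horizontal multisections of positive genus, and I would derive a contradiction from the Shioda--Tate description of $\mathrm{NS}(X)$, which cannot accommodate so many disjoint, orthogonal, positive-genus multisections. In the rational case I would work inside the unimodular lattice $H^2(X,\ZZ)$ of signature $(1,b-1)$, showing that an orthogonal system of effective positive-genus classes, with $-K_X$ represented by $a_1D_1+\sum_{i\ge2}(-a_i)D_i$, is incompatible with the adjunction constraints and the structure of the extremal rays of the Mori cone. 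This is exactly the content the conjecture predicts: such a configuration does exist \emph{symplectically}, as the constructions of \cite{MRT} behind Theorem~\ref{thm:MRT1} show, so any complete proof must use genuinely holomorphic input (Hodge type, adjunction, classification) with no symplectic analogue, and making the rational and elliptic cases fully rigorous is where the real difficulty lies.
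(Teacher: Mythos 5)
You were asked to prove Conjecture \ref{conj:Mu}, which is \emph{open}: the paper itself does not prove it. What the paper establishes is only the special instance Theorem \ref{thm:conj2} (= Theorem \ref{thm:a-g}), where all curves but one have genus $1$, and even there the conclusion is a bound $b_2\leq 2g^2-4g+3$ rather than outright non-existence. So there is no complete proof on either side to compare; the question is whether your plan could close the cases it leaves open, and as written it cannot.

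Your preliminary reductions are sound and agree with the paper's setup in Theorem \ref{thm:a-g}: $q=p_g=0$, $\chi(\cO_X)=1$, $K_X^2=10-b$ by Noether, signature $(1,b-1)$ so exactly one $D_i$ has positive square, and the coefficients of $K_X=\sum a_iD_i$ via adjunction. (Minor point: your disposal of $\kappa=0$ assumes $K_X$ numerically trivial, which requires $X$ minimal; blow-ups of K3 or Enriques surfaces are not excluded by that sentence.) The genuine gap is that the rational, elliptic, and general type cases --- which you yourself flag as ``where the real difficulty lies'' --- constitute essentially the whole problem, and, worse, your proposed route for the rational case cannot work in principle. You suggest deriving a contradiction ``inside the unimodular lattice'' from adjunction constraints and the structure of the Mori cone; but Section \ref{sec:3} of the paper exhibits, in $H_2(\CP^2\# 11\overline{\CP}{}^2)$ --- a rational surface --- an orthogonal basis $c_1,\ldots,c_{11},d$ with $c_k^2=-1$, $d^2=1$, whose classes have genus $1$ and $3$ by adjunction, and which is realized by disjoint \emph{symplectic} surfaces (Theorem \ref{thm:main1} plus blow-up). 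Hence the numerical data you wish to rule out is realizable, even by smooth disjoint representatives; no argument using only lattices, adjunction numbers, or intersection theory can reach a contradiction. (Note also that adjunction is not ``genuinely holomorphic input'': the symplectic adjunction formula is exactly what the paper uses to compute the genera of its symplectic surfaces.) Any successful argument must use effectivity and coherent-sheaf theory, which is precisely how the paper's partial result proceeds: it analyzes the linear system $|K_S+D_1|$, shows its fixed part $Z$ is supported on the negative curves, bounds $(K_S+D_1-Z)^2$ below by counting forced base points on the genus-$1$ curves of square $-1$, and, in the alternative proof for $b_2=12$, caps $h^0(3D_1)$ via Clifford's theorem against a lower bound obtained from the disjointness exact sequences. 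Your plan contains none of this effectivity analysis, and your elliptic-case (Shioda--Tate) and general-type (``short list shown to be empty'') steps are assertions rather than arguments; so the deferred cases are not merely unfinished --- the tools you propose for them are insufficient.
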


The present work enhances the main result from \cite{MRT} given in Theorem \ref{thm:11}, to 
achieve a $5$-manifold that it is furthermore simply connected.
Our main result is the following:

\begin{theorem} \label{thm:main}
There exists a (simply connected) Smale-Barden manifold which admits a semi-regular K-contact structure but which 
does not carry any semi-regular Sasakian structure.
\end{theorem}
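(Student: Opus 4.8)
The plan is to realize $M$ as a semi-regular Seifert bundle $\pi\colon M\to X$ over a symplectic $4$-orbifold $X$ whose underlying space is a smooth simply connected $4$-manifold and whose isotropy locus is a disjoint union of symplectic surfaces $D_1,\ldots,D_b$ (with $b=b_2(X)$) carrying multiplicities $m_i$. Following the strategy behind Theorem \ref{thm:11}, the two geometries are played against each other: on the symplectic side we want such a configuration to exist, so that $M$ is K-contact, while on the K\"ahler side a numerical bound will forbid it, so that $M$ carries no semi-regular Sasakian structure. The novelty with respect to Theorem \ref{thm:11} is to force $M$ to be genuinely simply connected rather than merely a homology Smale-Barden manifold, and this dictates the precise genus configuration, namely all the $D_i$ of genus $1$ except a single one of genus $g>1$.

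First I would carry out ingredient (a): construct a simply connected symplectic $4$-manifold $X$ containing disjoint symplectic surfaces $D_1,\ldots,D_b$ that span $H_2(X,\QQ)$, with $g(D_i)=1$ for $i\geq 2$ and $g(D_1)=g>1$, and with $b=b_2(X)$ as large as needed. I would assemble $X$ by symplectic surgery, starting from an elliptic-type model rich in disjoint symplectic tori and introducing a single higher-genus piece via symplectic normal connected sums and blow-ups, resolving the resulting orbifold points; the adjunction data and the disjointness of the surfaces must be monitored throughout so that the $D_i$ stay smooth, mutually disjoint, and homologically independent.

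Next I would endow $X$ with the orbifold structure of multiplicity $m_i$ along $D_i$ and build the Seifert bundle $\pi\colon M\to X$ with an Euler class chosen so that the hypotheses of Theorem \ref{thm:12} hold; by the Boyer--Galicki correspondence between symplectic orbifolds and K-contact Seifert bundles this makes $M$ K-contact with the homology prescribed by Theorem \ref{thm:12}. The delicate point is to arrange $\pi_1(M)=0$ rather than only $H_1(M,\ZZ)=0$: since the underlying space of $X$ is simply connected, $\pi_1^{\orb}(X)$ is generated by the meridians of the $D_i$ subject to $\mu_i^{m_i}=1$, while the fibre class contributes a cyclic subgroup of $\pi_1(M)$. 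I would choose the multiplicities $m_i$ (for instance as powers $p^i$ of a prime, so as to match Corollary \ref{cor:18}) and the Euler class so that these relations, together with the homological spanning of the $D_i$, kill both the orbifold fundamental group and the fibre, yielding $\pi_1(M)=0$.

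Finally I would rule out a semi-regular Sasakian structure. If $M$ admitted one, Corollary \ref{cor:18}, in the form adapted to the present genus configuration, would produce a smooth K\"ahler surface $S$ with $b_1=0$ carrying $b_2(S)=b$ disjoint complex curves spanning $H_2(S,\QQ)$, all of genus $1$ except one of genus $g$. Ingredient (b) is precisely the assertion that such a K\"ahler configuration forces a bound $b_2(S)\leq C(g)$, so having built $X$ with $b=b_2(X)>C(g)$ gives the desired contradiction, proving the theorem. I expect the main obstacle to be ingredient (a): producing a \emph{simply connected} symplectic $4$-manifold whose homology is spanned by disjoint symplectic surfaces of the prescribed genera is exactly the kind of configuration that the K\"ahler bound of ingredient (b) shows to be impossible for large $b_2$, so the construction lives on the boundary between the symplectic and K\"ahler worlds and must exploit genuinely non-K\"ahler surgery while keeping simple connectivity, disjointness, and homological spanning under tight simultaneous control.
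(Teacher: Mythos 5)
Your proposal reproduces the paper's global strategy, which is already laid out in the Introduction: build a simply connected symplectic $4$-manifold whose rational homology is spanned by disjoint symplectic surfaces, all of genus $1$ except one of genus $g>1$; pass to a Seifert bundle to obtain a K-contact $M$ with the homology of Theorem \ref{thm:12}; and exclude semi-regular Sasakian structures via a bound on $b_2$ for K\"ahler surfaces carrying such configurations. However, none of the three substantive steps is carried out, and the one step you do sketch in detail --- simple connectivity of $M$ --- is set up incorrectly. By \cite{HD}, $\pi_1^{\orb}(X')$ is the quotient of $\pi_1(X-\bigcup D_i)$ by the normal closure of the powers $\delta_i^{m_i}$ of the meridians. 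Simple connectivity of $X$ only tells you that the meridians \emph{normally generate} $\pi_1(X-\bigcup D_i)$; it does not give the presentation $\langle \mu_i \mid \mu_i^{m_i}=1\rangle$ that you invoke. Complements of curves in simply connected $4$-manifolds can have large, even non-abelian, fundamental groups (Zariski's examples in $\CP^2$), and no choice of multiplicities or Euler class can remove the unknown extra relations: since the $m_i$ enter only through $\delta_i^{m_i}$, triviality of $\pi_1^{\orb}(X')$ forces one to control $\pi_1(X-\bigcup D_i)$ itself. In the paper this is Theorem \ref{thm:main2}, proved by showing $\pi_1(X-(\tilde C_1\cup\ldots\cup\tilde C_{11}\cup\tilde G))=1$, and the proof depends on the explicit geometry of the construction: the genus-$1$ surfaces are sections of a line bundle over a cubic inside a plumbing $P_c(C\cup L)$, so the generators of their fundamental groups can be moved vertically off the configuration, pushed to the boundary, and contracted outside using the vanishing-cycle Lemma \ref{lem:loops-cubic}. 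Nothing of this sort is available for an unspecified surgery construction.

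The two remaining ingredients are named but not supplied. For (a), ``an elliptic-type model rich in disjoint symplectic tori'' plus normal connected sums and blow-ups does not address the actual crux, which is making the disjoint surfaces \emph{span} $H_2(X,\QQ)$ while keeping the prescribed genera; the paper achieves this with a very specific configuration (Theorem \ref{thm:main1}): eleven points in $\CP^2$, eleven symplectic cubics $C_i$, each passing through the ten points other than $P_i$, and a degree-$10$ genus-$3$ symplectic curve $G$ with ordinary triple points at all eleven points, built inside the plumbing of a cubic and a line precisely because such a configuration cannot exist holomorphically, and then blown up to give $X=\CP^2\#11\overline{\CP}{}^2$ with $12$ disjoint symplectic surfaces spanning $H_2$. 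Incidentally, this ambient manifold is K\"ahler: it is the isotopy class of the surface configuration, not the manifold, that is non-K\"ahler, so your expectation that one ``must exploit genuinely non-K\"ahler surgery'' is misdirected. For (b), the bound you postulate as $b_2(S)\leq C(g)$ is exactly Theorem \ref{thm:conj2}, $b\leq 2g_b^2-4g_b+3$, whose proof is a substantial algebro-geometric argument (adjunction, Riemann--Roch, and a delicate analysis of the fixed part of the linear system $|K_S+D_1|$), giving $b\leq 9<12$ for $g=3$; asserting it as ``precisely the assertion'' needed is circular in a blind proof. As it stands, your text is a correct road map of the paper's approach, but it proves nothing that the paper's Introduction does not already state.
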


On the one hand, we provide a new construction of a symplectic $4$-manifold $X$ with 
$b_1 = 0$ and $b_2 = b > 1$, having a collection of disjoint symplectic surfaces $C_1, \ldots, 
C_b$ spanning $H_2(X,\QQ)$, and all with genus $g_i \geq 1$. This is based on the following
phenomenom which can be performed in the symplectic setting but not in the algebro-geometric
situation. 

Start with the complex projective plane $\CP^2$ and two generic (smooth) complex cubic curves $C_1, C_2$. 
Note $C_1$ and $C_2$ have genus $1$ by the genus-degree formula,
and they intersect in nine points $P_1, \ldots, P_9$. A third complex cubic curve passing through $P_1,\ldots, P_8$ 
has to go necessarily through $P_9$. This is a purely algebraic phenomenon. However, it is possible to 
construct a third symplectic cubic $C_3$ going through $P_1, \ldots , P_8$, but intersecting $C_1$ at another point $P_{10}$, 
and $C_2$ at a different point $P_{11}$. Note that each $C_i$ misses exactly one of the eleven points $P_1,\ldots, P_{11}$. 
Looking at this more symmetrically, we aim to have a collection of $11$ points $\D = \{P_1, \ldots, P_{11}\}$ 
and $11$ cubic complex curves $C_1, \ldots, C_{11}$ such that $C_i$ passes through the points of $\D - \{P_i\}$, $i=1,\ldots, 11$.
In this way, the intersections are $C_i \cap  C_j = \D - \{P_i, P_j\}$ and no more points. Blowing up at all points of $\D$,
we get the (symplectic) $4$-manifold $X = \CP^2 \# 11 \overline{\CP}{}^2$, with $11$ complex curves of genus $1$ and disjoint. 
An extra (complex) curve can be obtained by taking a singular complex curve $G$ of degree $10$ 
with ordinary triple points at the points of $\D$.  Note that $G$ has genus $3$ by the Pl\"ucker formulas.
Moreover, as $G\cdot C_i=30$ equals the geometric intersection, that is, $3$ times
for each of the $10$ triple points in $G\cap C_i=\D-\{P_i\}$, we would not have more intersections. This curve is
of genus $g_G=3$, and it becomes a smooth genus $3$ curve in the blow-up, that is disjoint from the others.
This heuristic argument has to be carried out in a slightly different guise, by making a symplectic construction in a 
tubular neighbourhood of a cubic curve and a complex line and gluing it in symplectically (see Section \ref{sec:2}). 

\begin{theorem} \label{thm:main1}
Let $P_1,\ldots, P_{11}$ be $11$ points in $\CP^2$. 
Then there exist symplectic surfaces 
 $$
 C_1,C_2,\ldots, C_{11},G \subset \CP^2
 $$
such that: 
 \begin{itemize}
 \item $C_i$ is a genus $1$ smooth surface and $P_j \in C_i$ for $j\neq i$, $P_i \notin C_i$.
 \item The surfaces $C_i, C_j$, $i \neq j$ intersect exactly at 
 $\{P_1, \ldots , P_{11} \}-\{P_i,P_j\}$, positively and transversely.
 \item $G$ is a genus $3$ singular symplectic surface whose only singularities
 are $11$ triple points at $P_i$ (with different branches intersecting positively).
 Moreover $G$ intersects each $C_i$ only at the points $P_j$, $j\neq i$, 
 and all the intersections of $C_i$ with the branches of $G$ are positive and transverse.
 \end{itemize}
\end{theorem}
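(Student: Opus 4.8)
The plan is to treat the statement as a purely symplectic assertion and to reduce it, via transitivity of the symplectomorphism group, to a single convenient configuration of points. Since the group of Hamiltonian symplectomorphisms of $(\CP^2,\omega_{FS})$ acts transitively on ordered $11$-tuples of distinct points, and since a symplectomorphism carries symplectic surfaces to symplectic surfaces preserving genus, positivity and transversality of intersections, it suffices to produce the configuration $C_1,\dots,C_{11},G$ for one judiciously chosen $\D=\{P_1,\dots,P_{11}\}$ and then pull everything back. This reduction is essential: for generic points there is no complex cubic through $10$ of them (cubics form a $9$-dimensional linear system and $10$ points impose $10$ conditions), so the surfaces cannot be complex and the needed freedom must come from the symplectic category.

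The heart of the construction is a local symplectic modification that defeats the Cayley--Bacharach obstruction described in the Introduction. I would start from honest complex cubics: fix a generic pencil $\{sA+tB\}$ with nine transverse base points, so that every member passes through all nine and any two members meet there with $(3H)\cdot(3H)=9$, all intersections positive and transverse. The homological intersection numbers for the desired pattern are all consistent, since $C_i\cdot C_j=9=\#(\D-\{P_i,P_j\})$; nothing obstructs the counts, and only the forced incidence (a cubic through eight base points must pass through the ninth) needs to be broken. The key move is local: in a Darboux chart around a base point $Q$ choose complex coordinates with $A=\{y=0\}$, $B=\{x=0\}$, and a chosen member $C=\{y=\lambda x\}$; then replace the arc of $C$ inside the chart by an embedded symplectic arc that avoids the origin, crosses $\{y=0\}$ once and $\{x=0\}$ once, each transversally and positively at nearby points, and matches $C$ on the boundary. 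Because any $2$-plane $C^1$-close to a complex line is $\omega_{FS}$-symplectic, interpolating between the modified arc and the unchanged complex curve keeps $C$ a smooth symplectic torus; the net effect pushes $C$ off $Q$ while trading the lost intersection at $Q$ for one new intersection with each of $A$ and $B$, preserving $C\cdot A=C\cdot B=9$. This realizes a symplectic cubic through eight base points missing the ninth, exactly as in the Introduction, and is carried out inside a tubular neighbourhood of a cubic and a line.

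With this move in hand the construction of $C_1,\dots,C_{11}$ becomes a bookkeeping problem: organize the starting cubics and the choices of modified base points so that, after all local surgeries, each $C_i$ passes through $\D-\{P_i\}$ and misses $P_i$, with the eleven created points being precisely the remaining $P_j$, and then verify that every pairwise intersection is exactly $\D-\{P_i,P_j\}$, positive and transverse. The auxiliary curve $G$ is built in the same spirit: start from a complex decic realizing ordinary triple points at ten of the eleven points (a decic is a $65$-dimensional system and ten triple points impose $60$ conditions, so such curves exist), and then apply the analogous local modification near the eleventh point to create the missing triple point while breaking its forced incidence. Its invariants follow from adjunction: in class $10H$ with eleven ordinary triple points one gets $2p_a-2=(10H)^2-c_1(\CP^2)\cdot 10H=100-30$, so $p_a=36$ and $g=36-11\cdot 3=3$, while $G\cdot C_i=30=3\cdot\#(\D-\{P_i\})$ matches the requirement that $G$ meet $C_i$ only along the triple points it shares with it.

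The main obstacle I anticipate is twofold. First, the local-to-global gluing: one must perform all modifications simultaneously inside disjoint tubular neighbourhoods while keeping every surface embedded, keeping the prescribed singularity types ($C_i$ smooth and $G$ with only ordinary triple points), and keeping \emph{every} intersection positive and transverse with respect to $\omega_{FS}$; positivity is precisely what forces the geometric intersection count to equal the homological one and cannot be sacrificed. Second, the combinatorial orchestration: arranging one consistent global starting configuration whose finitely many local surgeries yield the fully symmetric incidence pattern for all eleven cubics and for $G$ at once, rather than the single instance of the Introduction. Controlling these two points is where the real work lies; the homological and genus computations, by contrast, are forced by adjunction and Bézout and require only routine verification.
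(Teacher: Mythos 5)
Your reduction via transitivity and your adjunction/dimension counts are fine, but they are the routine part; the actual construction is what you defer to ``bookkeeping'' and ``combinatorial orchestration,'' and the specific route you propose --- eleven distinct members of a complex pencil of cubics, corrected by surgeries supported in disjoint Darboux charts around base points --- is not merely delicate but provably obstructed. The obstruction is conservation of local intersection numbers. Write $C_i=A_i$ outside a union of disjoint surgery balls, where $A_1,\ldots,A_{11}$ are pencil members through the base points $Q_1,\ldots,Q_9$. For each pair $i\neq j$ and each ball $U$, the signed count of $C_i\cap C_j$ inside $U$ equals that of $A_i\cap A_j$ inside $U$; since you demand that all final intersections be positive and transverse, the set-theoretic count of $C_i\cap C_j$ in $U$ must equal the number of base points in $U$, and no intersections can appear or disappear outside the balls. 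Now at $Q_k$ all eleven pencil members meet pairwise transversally, so pushing $C_k$ off $Q_k$ forces $C_k$ to meet each of the ten other curves at some point of the ball around $Q_k$; these ten points are necessarily pairwise distinct (two unmodified members meet only at $Q_k$, which $C_k$ now misses), and each lies on only two curves, whereas every intersection point of the target configuration must be one of the eleven points of $\D$ and hence carry ten curves. This cannot be repaired by further local moves: if $U$ is any surgery ball containing $b\geq 1$ base points and $S=\D\cap U$, the requirement $\#\left((\D\setminus\{P_i,P_j\})\cap U\right)=b$ for all $55$ pairs forces (first testing pairs with $P_i,P_j\notin S$, then pairs with $P_i\in S$) that $|S|=11$ and $b=9$, i.e.\ a single surgery region containing all nine base points and all eleven target points. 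So surgery localized at the base points is impossible, and a one-region global construction is exactly what your proposal does not supply.

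That global construction is the content of the paper, and its geometry is genuinely different from what you envision: there are no eleven distinct cubics anywhere. All eleven genus-$1$ surfaces are graphs of sections of a single degree-$9$ line bundle $E\to C$ over one fixed cubic $C$, namely $\s_j(z)=\prod_{i\neq j}\left(1-\frac{z}{\l z_i}\right)$ on a disc, cut off to constant sections so as to extend over $C$ without creating new intersections (this is checked by explicit estimates); ten of the points $P_i$ sit on the zero section and the eleventh at $(0,\e)$, which is how Cayley--Bacharach is evaded. The genus-$3$ surface comes from three meromorphic sections of $E$ with simple poles, whose graphs are plumbed into a line $L$; the plumbing formalism and the symplectic tubular neighbourhood theorem of Section \ref{sec:2} are then what transplant this one local model into $\CP^2$ as a neighbourhood of a complex cubic union a line. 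In short: all curves are $C^1$-small perturbations of the \emph{same} cubic (plus the plumbed line), living in one connected model, which is precisely the structure your disjoint-local-surgery scheme cannot produce.
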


Using this, we construct our K-contact $5$-manifold. First we blow up $\CP^2$ at the $11$ points $P_1,\ldots, P_{11}$,
to obtain a symplectic manifold, which topologically is $X=\CP^2 \# 11 \overline{\CP}{}^2$. The proper transforms
of $C_1,\ldots, C_{11},G$ are symplectic surfaces in $X$, via the method in \cite[section 5.2]{MRT}. The proper transform of $G$
becomes a smooth genus $3$ symplectic surface. Therefore $b_2(X)=12$ and it has $12$ disjoint symplectic surfaces,
$11$ of them of genus $g_i=1$ and one of genus $g_{12}=3$.
Take numbers $m_i$. Using \cite[Proposition 7]{MRT}, we make $X$ into an orbifold $X'$ whose isotropy locus
is $C_i$ with multiplicity $m_i$ and $G$ with multiplicity $m_{12}$. 
Then we can take a Seifert bundle $M \to X'$ with primitive Chern class $c_1(M/e^{2\pi i/\mu})=[\omega]$ 
after a small perturbation of the symplectic form,
as in \cite[Lemma 20]{MRT}. The manifold $M$ is K-contact and has
 \begin{equation}\label{eqn:homol2}
 H_1(M,\ZZ)=0, \qquad H_2(M,\ZZ)=\ZZ^{11}\oplus \bigoplus_{i=1}^{12} (\ZZ/m_i)^{2g_i}.
 \end{equation}
We choose a prime $p$ and $m_i=p^i$, so that all $m_i$ are distinct and pairwise non-coprime.

Given a Seifert bundle $M\to X'$, the fundamental group of $M$ is directly related to the \emph{orbifold fundamental group} of $X'$ by
the long exact sequence
 $$
 \ldots \to \pi_1(S^1)=\ZZ \to \pi_1(M) \to \pi_1^{\orb}(X') \to 1
 $$
When $\pi_1^{\orb}(X')=1$, we have that $\pi_1(M)$ is abelian, and hence if $H_1(M,\ZZ)=0$ then $M$ is simply connected.
We prove in Section \ref{sec:4} the following

\begin{theorem} \label{thm:main2}
 For the orbifold $X'$ constructed above, $\pi_1^{\orb}(X')=1$. Hence $M$ is a Smale-Barden manifold.
 \end{theorem}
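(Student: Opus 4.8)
The plan is to present $\pi_1^{\orb}(X')$ as a quotient of the fundamental group of the complement of the isotropy locus, and then to kill every generator. Write $X_0=X\setminus(\tilde C_1\cup\cdots\cup\tilde C_{11}\cup\tilde G)$. Since the isotropy of $X'$ lies in codimension $2$ along these twelve pairwise disjoint surfaces, with multiplicities $m_1,\ldots,m_{11}$ and $m_{12}$, one has
\[
\pi_1^{\orb}(X')=\pi_1(X_0)\big/\langle\!\langle\,\gamma_1^{m_1},\ldots,\gamma_{11}^{m_{11}},\gamma_G^{m_{12}}\,\rangle\!\rangle ,
\]
where $\gamma_i$ is a meridian of $\tilde C_i$, $\gamma_G$ a meridian of $\tilde G$, and $\langle\!\langle\cdot\rangle\!\rangle$ denotes normal closure. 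As $X=\CP^2\#11\overline{\CP}{}^2$ is simply connected, every loop in $X_0$ bounds a disc in $X$ which, made transverse to the surfaces, exhibits the loop as a product of conjugates of the $\gamma_i$ and of $\gamma_G$; hence these twelve meridians generate $\pi_1(X_0)$, and it suffices to show that each of them dies in the quotient.

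The relations come from rational curves meeting the configuration transversally and positively: if an embedded sphere $S\subset X$ is transverse to $\tilde C_1\cup\cdots\cup\tilde C_{11}\cup\tilde G$, then $S$ minus its intersection points is a punctured sphere in $X_0$ along which the product of the puncture meridians is trivial. I would feed in the exceptional divisors $E_j$ and the proper transforms of lines. A local computation at $P_j$ gives $\tilde C_i\cdot E_j=1$ for $i\neq j$ and $\tilde C_j\cdot E_j=0$, while the ordinary triple point of $G$ gives $\tilde G\cdot E_j=3$; hence $E_j$ provides a relation abelianizing to $\sum_{i\neq j}\gamma_i+3\gamma_G=0$. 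A generic line $\ell$ (missing all $P_j$) meets each $\tilde C_i$ in $3$ points and $\tilde G$ in $10$ points, giving $3\sum_i\gamma_i+10\gamma_G=0$.

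The key numerical input is that the configuration is unimodular: the $\tilde C_i,\tilde G$ are pairwise disjoint with $\tilde C_i^2=-1$ and $\tilde G^2=+1$, so their Gram matrix is $\operatorname{diag}(-1,\ldots,-1,+1)$, which is precisely the intersection form of $X$. Equivalently the twelve surfaces form a $\ZZ$-basis of $H_2(X;\ZZ)$ and $H_1(X_0)=\coker(H_2(X;\ZZ)\to\ZZ^{12})=0$, the map being intersection with the twelve surfaces. Concretely, comparing the $E_j$-relations for different $j$ forces all $\gamma_i$ to coincide, say $\gamma_i=\gamma$; the $E_j$- and line-relations then read $10\gamma+3\gamma_G=0$ and $33\gamma+10\gamma_G=0$, a unimodular system, so $\gamma=\gamma_G=0$ already in $H_1(X_0)$. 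The substantive step, which I expect to be the main obstacle, is to upgrade this homological vanishing to vanishing in $\pi_1^{\orb}(X')$: one must control the basepoints and conjugating paths in the meridian relations, not merely their homology classes. Here I would use that near each $P_j$ the surfaces may be taken holomorphic and in standard position, so that the punctured spheres $E_j$ yield genuine Zariski--van Kampen relations with tractable conjugations; together with the line relations and the orbifold relations $\gamma_i^{m_i}=1$, this should force every meridian to be trivial, giving $\pi_1^{\orb}(X')=1$.

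Finally, substituting $\pi_1^{\orb}(X')=1$ into the Seifert long exact sequence $\ZZ\to\pi_1(M)\to\pi_1^{\orb}(X')\to1$ shows that $\pi_1(M)$ is a quotient of $\ZZ$, hence abelian; therefore $\pi_1(M)=H_1(M;\ZZ)=0$ by (\ref{eqn:homol2}) (see Theorem~\ref{thm:12}). Thus $M$ is simply connected, and being a compact oriented $5$-manifold it is a Smale--Barden manifold.
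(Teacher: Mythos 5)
Your reduction to the complement $X_0=X-(\tilde C_1\cup\ldots\cup\tilde C_{11}\cup\tilde G)$, the normal generation of $\pi_1(X_0)$ by meridians, and the homological computation $H_1(X_0)=0$ from unimodularity of the configuration are all correct, and they match the framework the paper sets up via the presentation of $\pi_1^{\orb}(X')$. But the step you yourself flag as "the main obstacle" is precisely the entire content of the theorem, and your proposal contains no argument for it. A finitely presented group can be perfect and nontrivial (this is exactly what happens for Poincar\'e-sphere/Brieskorn-type Seifert examples, where the orbifold group has trivial abelianization), so $H_1(X_0)=0$ together with the relations $\g_i^{m_i}=1$ proves nothing about $\pi_1^{\orb}(X')$. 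The relations coming from the punctured spheres $E_j$ and $\ell$ only assert that certain products of conjugates $w\,\g_i^{\pm1}w^{-1}$ are trivial, and without controlling the conjugating words $w$ one cannot extract $\g_i=1$. Moreover, the proposed fix via Zariski--van Kampen is not available here: the surfaces $C_i$ and $G$ are only symplectic, built by hand inside a plumbing (Section \ref{sec:3}), and Section \ref{sec:5} shows that no algebraic configuration with these invariants exists, so there is no pencil/braid-monodromy machinery to invoke; even for genuinely algebraic arrangements, Zariski--van Kampen needs global monodromy data, not just local positivity and transversality at the intersection points.

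For comparison, the paper proves the stronger statement $\pi_1(X_0)=1$, never using the orbifold relations at all, and the mechanism that kills the meridians is group-theoretic rather than homological. By Seifert--van Kampen, since $\pi_1(X)=1$, it suffices to kill the image of $\pi_1$ of each boundary circle bundle $Y_i=\bd B_\epsilon(\tilde C_i)$ (and $Y_{12}=\bd B_\epsilon(\tilde G)$). Two ingredients do this: (a) Lemma \ref{lem:loops-cubic}: the generators $\a,\b$ of $\pi_1$ of the cubic are vanishing cycles of the degeneration $y^2=x^3-r^2x \to y^2=x^3$ to a cuspidal curve, hence bound explicit discs in the complement of the configuration; lifting $\a,\b$ through the sections $\hat\s_i$ and $\hat\tau_j$ kills the generators $\a_i,\b_i$ (and the genus-$3$ generators) in $\pi_1(X_0)$. (b) In $\pi_1(Y_i)$ one has the relation $[\a_i,\b_i]=\g_i^{-1}$, because the Euler number of the normal bundle of $\tilde C_i$ is $-1$ (and $\prod_j[\a^{(j)},\b^{(j)}]=\g_{12}$ for $\tilde G$, of self-intersection $+1$); hence once the $\a_i,\b_i$ die, the meridian $\g_i$ dies with them. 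This commutator identity is the $\pi_1$-level role played by the self-intersections $\pm1$, which your unimodularity argument can only see after abelianization --- and after abelianization the information needed to kill the meridians is irretrievably lost.
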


On the second hand, we have to prove that $M$ cannot admit a semi-regular Sasakian structure. If this were the
case, then there would be a Seifert bundle $M\to Y$, where $Y$ is a K\"ahler orbifold. By \cite[Proposition 10]{MRT},
this orbifold $Y$ is a complex manifold, and as the Sasakian structure is semi-regular, $Y$ is smooth. As the homology
of $M$ is given by (\ref{eqn:homol2}), then Corollary \ref{cor:18} guarantees that $Y$ has $b_1=0$, $b_2=12$
and contains $12$ disjoint smooth complex curves $C_1',\ldots, C_{11}',G'$, where $g(C_i')=1$ and $g(G')=3$.
We prove the corresponding instance of Conjecture \ref{conj:Mu}. Note that this is not covered by Theorem \ref{thm:MRT1}.

\begin{theorem}\label{thm:conj2}
Let $S$ be a smooth complex surface with $H_1(S,\QQ)=0$ and 
containing $D_1,\ldots, D_b$, $b=b_2(S)$, smooth disjoint complex curves with
genus $g(D_i)=g_i>0$, and spanning $H_2(S,\QQ)$. Assume that
$g_i=1$, for $1\leq i\leq b-1$. Then $b\leq 2g^2_b-4g_b+3$.
\end{theorem}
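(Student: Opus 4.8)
The plan is to translate the hypotheses into numerical constraints on the intersection form and the canonical class, and then to run a case analysis by Kodaira dimension, the rational case being the decisive one.

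First I would extract the Hodge-theoretic consequences. Since $H_1(S,\QQ)=0$ we have $b_1=0$, hence $q=0$. The classes $[D_i]$ lie in $H^{1,1}(S)$ and span $H^2(S,\QQ)$, so all of $H^2$ is of type $(1,1)$; this forces $p_g=0$ and therefore $\chi(\cO_S)=1$. Noether's formula $12\chi(\cO_S)=K^2+e(S)$ with $e(S)=2+b$ gives $K^2=10-b$, and the index theorem yields signature $(1,b-1)$. As the $D_i$ are pairwise disjoint and form a rational basis, they diagonalize the intersection form with entries $d_i:=D_i^2$; by the signature exactly one $d_{i_0}$ is positive and the rest are negative (in particular nonzero). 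Writing $K=\sum_i a_iD_i$ and using adjunction $2g_i-2=d_i+K\cdot D_i$ gives $a_i=(2g_i-2)/d_i-1$, so $a_i=-1$ for every elliptic curve ($i<b$) and $a_b=(2g_b-2)/d_b-1$. Since $b=10-K^2$, the asserted bound $b\le 2g_b^2-4g_b+3$ is equivalent to the lower bound $K^2\ge 9-2(g_b-1)^2$.

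Second, several Kodaira classes are disposed of quickly. If $K$ is numerically trivial (the Enriques case $\kappa(S)=0$), then $K\cdot D_i=0$ forces $d_i=2g_i-2=0$ for an elliptic $D_i$, contradicting nondegeneracy, so $\kappa(S)=0$ cannot occur. If $S$ is of general type then $K^2\ge1$, which already gives $K^2\ge 9-2(g_b-1)^2$ as soon as $g_b\ge3$; the borderline small-genus subcases I would tighten using the trace identity below together with Noether's inequality. Combining the two expressions for $K^2$ produces the single working identity
\[
\sum_i d_i \;=\; (10-b)+(4g_b-4)-\frac{(2g_b-2)^2}{d_b},
\]
which, together with the Hodge-index inequality $(A\cdot B)^2\ge A^2B^2$ valid when $A^2>0$, is the computational backbone for the remaining estimates.

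The heart of the argument, and the main obstacle, is the case $\kappa(S)=-\infty$, i.e.\ $S$ rational, where $K^2=10-b$ may be arbitrarily negative and the purely lattice-theoretic data impose no bound: the integrality $d_i\le-1$ only constrains $d_b$ and lets the $b$-dependence cancel in the identity above, while the Hodge-index theorem merely reconfirms $\big(\sum_{i\ne i_0}D_i\big)^2<0$. The genuine bound must come from the geometry of the configuration, so the plan is to contract a maximal system of disjoint $(-1)$-curves to reach a minimal rational model, $\CP^2$ or a Hirzebruch surface, to track the images of $D_1,\dots,D_{b-1},D_b$ under this birational morphism, and to bound the number of pairwise-disjoint positive-genus curves by an intersection-theoretic count on the minimal model. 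On $\CP^2$ the single genus-$g_b$ curve has a controlled degree via the genus-degree formula, and Bezout's theorem limits how many disjoint elliptic (degree-controlled) curves can coexist with it; carrying this bookkeeping through the blow-downs is exactly what should yield the quadratic factor $2(g_b-1)^2$. The properly elliptic case $\kappa(S)=1$ I would handle in the same spirit by passing to the elliptic fibration: fibres have square $0$, so the negative elliptic $D_i$ are multisections or components of reducible fibres, and their number is controlled by $e(S)=2+b$ through the fibre Euler-characteristic formula, again pinned down quantitatively by the invariants of $D_b$.
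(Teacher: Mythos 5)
Your numerical setup (Hodge theory forcing $p_g=q=0$ and $\chi(\cO_S)=1$, the signature $(1,b-1)$, the diagonal basis, adjunction giving coefficient $-1$ on every elliptic curve, and the equivalence of the bound with $K_S^2\ge 7+4g_b-2g_b^2$) agrees with the paper's, but your proof stops exactly where it would need to start. The case you yourself identify as decisive, $S$ rational, is left as a plan (``contract $(-1)$-curves, track the images, apply Bezout'') with no executed argument: blowing down a maximal system of disjoint $(-1)$-curves preserves neither the disjointness nor the smoothness of the $D_i$ (their images on $\CP^2$ or a Hirzebruch surface will in general meet one another and acquire singularities at the centers of the blow-ups), the degrees of those images are not controlled a priori, and no bookkeeping is given that would produce the quadratic term $2(g_b-1)^2$; asserting that the bookkeeping ``should yield'' it is a hope, not a proof. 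The other cases also have real gaps: for general type you invoke $K_S^2\ge 1$, which holds only for \emph{minimal} surfaces, and nothing in the hypotheses makes $S$ minimal --- each blow-up drops $K_S^2$ by $1$, so $K_S^2=10-b$ can be arbitrarily negative on a non-minimal surface of general type; moreover, even granting minimality, for $g_b=2$ you would need $K_S^2\ge 7$, and the promised ``tightening'' is never carried out. Likewise $\kappa=0$ does not force $K_S$ numerically trivial without minimality. Finally, you never determine \emph{which} curve carries the positive self-intersection, which turns out to be an essential point.

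The paper's proof is uniform in the Kodaira dimension and rests on two ingredients absent from your proposal. First, it shows the positive square must sit on the genus-$g_b$ curve: if an elliptic $D_i$ had $D_i^2>0$, then Riemann--Roch together with $h^0(K_S-D_i)=0$ gives $h^0(D_i)\ge 2$, hence an elliptic pencil; after resolving base points, all the other $D_j$ (being disjoint from the fibre $D_i$) must lie in fibres, whose irreducible components have arithmetic genus at most $1$, contradicting $g_b\ge 2$. Second --- the heart --- it studies the linear system $|K_S+D_1|$, where $D_1$ denotes the high-genus curve: the restriction sequence gives $h^0(K_S+D_1)=g\ge 2$, the fixed part $Z$ is shown to be supported on the negative elliptic curves, and on every elliptic curve of self-intersection $-1$ not contained in $Z$ the system has a forced base point (a degree-one divisor on a non-rational curve is rigid). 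Playing these forced base points against $F^2\ge 0$ for the movable part $F=K_S+D_1-Z$ yields $2b\le 4g-2-m_1+K_S^2+\sum_{i\ge 2}m_i$, and substituting $K_S^2=(2g-2-m_1)^2/m_1-\sum_{i\ge 2}m_i$ (computed from the diagonal basis; Noether's formula is not needed in the main argument) leaves a decreasing function of $m_1=D_1^2$ alone, maximized at $m_1=1$, which is exactly the stated bound. If you want to repair your approach, it is this positivity/base-point mechanism on $S$ itself, not the minimal model program, that supplies the missing quantitative input.
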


In particular, the case $b_2=12$, $g_i=1$, for $1\leq i\leq 11$ and $g_{12}=3$ cannot happen.

\begin{corollary} \label{cor:spin} 
Let $M$ be a $5$-dimensional manifold with $H_1(M,\ZZ)=0$ and 
$$
H_2(M,\ZZ)= \ZZ^{11} \oplus \bigoplus_{i=1}^{12} (\ZZ/p^{i})^{2g_i} \, ,
$$
where $g_i=1$ for $1 \le i \le 11$, $g_{12}=3$, and $p$ is a prime number.
Then $M$ does not admit a semi-regular Sasakian structure.
\end{corollary}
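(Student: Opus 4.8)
The plan is to argue by contradiction, reducing the existence of a semi-regular Sasakian structure on $M$ to the configuration of complex curves that Theorem \ref{thm:conj2} forbids. First I would assume that $M$ admits a semi-regular Sasakian structure. As recalled in the introduction, any Sasakian manifold is a Seifert bundle $M\to Y$ over a K\"ahler orbifold $Y$; by \cite[Proposition 10]{MRT} this $Y$ is a complex manifold, and semi-regularity (isotropy locus only in codimension $2$) means its underlying space is a topological manifold, so $Y$ is a \emph{smooth} K\"ahler surface.

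Next I would transfer the topology of $M$ to $Y$ through Corollary \ref{cor:18}. The hypotheses give $H_1(M,\ZZ)=0$ and
\[ H_2(M,\ZZ)=\ZZ^{11}\oplus\bigoplus_{i=1}^{12}(\ZZ/p^i)^{2g_i}, \]
which is precisely the normal form of that corollary with $k=11$ (so $k+1=12$), a single prime $p$, and genera $g_i=1$ for $1\le i\le 11$, $g_{12}=3$. Corollary \ref{cor:18} then yields $H_1(Y,\ZZ)=0$, $H_2(Y,\ZZ)=\ZZ^{12}$, and a ramification locus consisting of $12$ disjoint smooth complex curves $D_1,\dots,D_{12}$ that are linearly independent in rational homology---hence, since $b_2(Y)=12$, actually span $H_2(Y,\QQ)$---with $g(D_i)=g_i$.

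Finally I would apply Theorem \ref{thm:conj2} to the surface $S=Y$ with $b=b_2(Y)=12$. All its hypotheses are met: $H_1(S,\QQ)=0$, the $D_i$ are smooth, pairwise disjoint, of positive genus, span $H_2(S,\QQ)$, and satisfy $g_i=1$ for $1\le i\le b-1=11$. The theorem gives $b\le 2g_b^2-4g_b+3$ with $g_b=g_{12}=3$, i.e.\ $b\le 2\cdot 9-12+3=9$; this contradicts $b=12$. Hence $M$ admits no semi-regular Sasakian structure.

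I expect essentially no obstacle in this step: the entire analytic and geometric difficulty is carried by Theorem \ref{thm:conj2}, and the corollary is a direct combination of that bound with the Seifert-bundle dictionary of Corollary \ref{cor:18}. The only routine checks are that the given homology of $M$ matches the indexing $\bigoplus_{i=1}^{k+1}(\ZZ/p^i)^{2g_i}$ of Corollary \ref{cor:18} term by term, and that ``linearly independent in rational homology'' upgrades to ``spanning'' via the equality $b_2(Y)=12$ of ranks.
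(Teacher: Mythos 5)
Your proof is correct and follows essentially the same route as the paper: assume a semi-regular Sasakian structure, pass to the Seifert bundle $M\to Y$ over a smooth K\"ahler surface via \cite[Proposition 10]{MRT}, invoke Corollary \ref{cor:18} with $k=11$ to produce the twelve disjoint curves spanning $H_2(Y,\QQ)$, and contradict Theorem \ref{thm:conj2} since $b=12 > 2g_{12}^2-4g_{12}+3=9$. The only routine checks you flag (index matching and upgrading independence to spanning) are exactly the ones the paper handles implicitly, so there is nothing to add.
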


This proves Theorem \ref{thm:main}. It remains to see Theorems \ref{thm:main1}, \ref{thm:main2} and \ref{thm:conj2}.
We prove Theorem \ref{thm:main1} in Section \ref{sec:3}, Theorem \ref{thm:main2} in Section \ref{sec:4}
and Theorem \ref{thm:conj2} in Section \ref{sec:5}.

The manifold $M$ in Corollary \ref{cor:spin} is spin if $p=2$, and can be chosen to be spin or non-spin
if $p>2$.

Both here and in \cite{MRT} we have provided the first examples of symplectic $4$-manifolds
containing symplectic surfaces of positive genus and spanning the homology. Whereas the example of \cite{MRT}
is a symplectic $4$-manifold that does not admit a complex structure (see Remark \ref{rem:Roger}), the manifold
constructed here, $X=\CP^2\# 11 \overline{\CP}{}^2$ does admit a Kähler structure. So $X$ is symplectic deformation
equivalent to a Kähler manifold, but the $12$ symplectic surfaces inside it cannot be deformed to complex curves in the way.
We thank Roger Casals from prompting this question to us.

\noindent\textbf{Acknowledgements.} 
We are grateful to Enrique Arrondo, Alex Tralle, Fran Presas and Roger Casals for useful comments.
Thanks to the two anonymous referees that have given us numerous comments.
Second author partially supported by Project MINECO (Spain) PGC2018-095448-B-I00. Fourth author partially supported
by Project MINECO (Spain) MTM2016-78647-P.

\section{Symplectic plumbing}\label{sec:2}

The specific aim of this section is to give suitable local models for a small neighborhood 
of a union of two positively intersecting symplectic surfaces inside a $4$-manifold.
See references \cite{Gay-Stip, Gay-Mark} for related content.

%%%%%%%%%%%%%%%%%%%%%%%%%%%%%%%%%%%%%%%%%%%
\subsection{Definition of symplectic plumbing} \label{sec:plumbing}
%%%%%%%%%%%%%%%%%%%%%%%%%%%%%%%%%%%%%%%%%%%

Let $(S,\o)$ be a compact symplectic surface and $\pi:E\to S$ be a complex line bundle. Topologically, 
$E$ is determined by the Chern class $d=c_1(E)$ which is
the self-intersection of $S$ inside $E$, $d=[S]^2$. We put a hermitian structure in $E$,
so we can define a neighbourhood via a disc bundle of some fixed radius $c>0$,
denoted by $B_c(S)\subset E$. We construct a symplectic form on $B_c(S)$ next.
First, we write  $V'\Subset V$ if $V'$ is an open subset such that its closure
$\overline{V'}\subset V$.

\begin{lemma} \label{lem:2.1}
For small enough $c>0$, $B_c(S)$  admits a symplectic form $\o_E$ 
which is compatible with the complex structure of the fibers of the complex line bundle,
and such that the inclusion $(S,\o) \hookrightarrow (B_c(S),\o_E)$
is symplectic. If $V\subset S$ is a trivializing open set, $E|_V\cong V\x \CC$, and
$V'\Subset V$, we can arrange so that $\o_E|_{B_c(S)\cap E|_{V'}}$ is
the symplectic product structure on $B_c(S)\cap E|_{V'}\cong V'\x B_c(0)$,
with $B_c(0) \subset \CC$ a ball centered at $0$.
\end{lemma}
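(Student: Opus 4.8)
\emph{Proof proposal.} The plan is to produce $\o_E$ by the Thurston (minimal coupling) construction starting from a unitary connection on $E$, and then to obtain the product normal form over $V'$ by arranging that the connection be \emph{trivial}, not merely flat, over $V'$. First I would fix a Hermitian connection $\nabla$ on $E$ and let $\alpha$ denote the associated real connection $1$-form on the unit circle bundle, pulled back to $E\setminus S$. In a local trivialization $E|_V\cong V\x\CC$ with fiber coordinate $z=re^{i\h}$ it reads $\alpha=d\h+\pi^*a$ for a local potential $a\in\O^1(V)$ with $da=f$, the (real) curvature, and $r=|z|$ is the globally defined fiberwise norm. I then set
$$
\o_E=\pi^*\o+\tfrac12\,d(r^2\alpha).
$$
The first point to verify is that $r^2\alpha$ extends smoothly across the zero section: locally $r^2 d\h=x\,dy-y\,dx$ and $r^2\pi^*a$ are smooth and vanish along $S$, so $\o_E$ is a globally defined smooth $2$-form, and it is closed since $d\o_E=\pi^*d\o=0$.

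Next I would expand $d(r^2\alpha)=2r\,dr\wedge\alpha+r^2\pi^*f$, using $r\,dr\wedge d\h=dx\wedge dy$, to obtain
$$
\o_E=\pi^*\o+\tfrac12 r^2\pi^*f+dx\wedge dy+r\,dr\wedge\pi^*a .
$$
Along the zero section ($r=0$) this reduces to $\pi^*\o+dx\wedge dy$, whose square $2\,\pi^*\o\wedge dx\wedge dy$ is nowhere zero, while the remaining two terms are $O(r)$. Since $S$ is compact, nondegeneracy is an open condition that persists on a disc bundle $B_c(S)$ for some uniform small $c>0$; thus $\o_E$ is symplectic there. Restricting to the zero section, $r^2\alpha$ vanishes along $S$, so the pullback by the zero section gives $\o_E|_S=\o$, i.e.\ the inclusion $(S,\o)\inc(B_c(S),\o_E)$ is symplectic. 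Finally, on each fiber $\o_E$ restricts to $dx\wedge dy$ (the $\pi^*\o$, $\pi^*f$ and $\pi^*a$ terms kill vertical vectors), which is the standard area form, so $\o_E$ is compatible with the complex structure $i$ of the fibers.

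For the refinement over $V'$, the key is to choose $\nabla$ so that its potential vanishes on $V'$. Starting from the flat connection attached to the trivialization over $V$ (potential $0$) and any global connection $\nabla_1$ with potential $a_1$ over $V$, I take a cutoff $\chi\colon S\to[0,1]$ equal to $1$ near $\overline{V'}$ and supported in $V$; since connections on the $\U(1)$-bundle $E$ form an affine space over $\O^1(S)$, the local potential $(1-\chi)a_1$ glues with $\nabla_1$ off $\operatorname{supp}\chi$ to a global connection that is trivial on $V'$. Then $a\equiv 0$ and $f=da\equiv 0$ on $V'$, so over $B_c(S)\cap E|_{V'}\cong V'\x B_c(0)$ the displayed formula collapses to $\o_E=\pi^*\o+dx\wedge dy$, exactly the product symplectic structure. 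The main obstacle is precisely this last step: making the global connection genuinely trivial over $V'$ (so that both $a$ \emph{and} its curvature $f$ vanish there) while keeping the construction smooth across the zero section; the rest is the standard coupling computation together with the compactness argument that yields a uniform radius $c$.
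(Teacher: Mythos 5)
Your proof is correct, but it follows a genuinely different route from the paper's. The paper performs a partition-of-unity (Thurston-type) gluing: it covers $S$ by trivializing balls $U_\a$, takes in each trivialization the fiberwise primitive $\eta=u\,dv$ of the standard area form $\o_0=du\wedge dv$, and sets $\o_E=\pi^*\o_S+\sum_\a d\bigl((\pi^*\rho_\a)\cdot(\varpi_\a^*\eta)\bigr)$; nondegeneracy along the zero section plus compactness of $S$ gives the uniform radius $c$, and the product normal form over $V'$ comes for free by choosing the cover to consist of $V$ together with balls contained in $S-\overline{V'}$, so that the only nonzero cutoff over $V'$ is the one attached to $V$. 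You instead use the global minimal-coupling form $\pi^*\o+\tfrac12 d(r^2\a)$ of a unitary connection, and obtain the normal form by flattening the connection (zero potential, hence zero curvature) over $V'$ with a cutoff in the affine space of connections. Both devices are sound and both reduce nondegeneracy to an open-condition-plus-compactness argument. What yours buys: a single global, $S^1$-invariant formula whose restriction to \emph{every} fiber is exactly the standard area form, and a clean separation between the two requirements (the zero-section and fiber conditions come from the formula itself, the normal form from the choice of connection). What the paper's buys: it is more elementary (no connections or circle bundles needed, no discussion of smooth extension of $r^2\a$ across $S$), and one single choice, that of the cover and partition of unity, yields simultaneously the existence statement and the product structure over $V'$. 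One small point common to both arguments and worth making explicit: the identification $B_c(S)\cap E|_{V'}\cong V'\x B_c(0)$, and your assertion that $r=|z|$ is the fiberwise norm in the given trivialization, require the trivialization over $V$ to be unitary for the chosen hermitian structure (equivalently, one rescales the trivializing section to have unit norm); the paper makes the same implicit assumption when it claims that the changes of trivialization preserve $\o_0$.
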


\begin{proof}
Take $S=\bigcup_\a U_\a$ a cover of $S$, with each $U_\a$ symplectomorphic to a ball,
and trivializations  $E|_{U_\a} \cong U_\a \x \CC$.
In the fiber $\CC$ we put coordinates $u+iv$ and consider the standard symplectic form
$\o_0=du \wedge dv= d(u dv)=d \eta$.
Denote $\varpi_\a: U_\a \x \CC \to \CC$ the projection over the second factor,
and take $\rho_\a$ a smooth partition of unity subordinated to the cover $U_\a$ of $S$.
Define
 $$
 \o_E= \pi^* \o_S + \sum_\a d( (\pi^* \rho_\a) \cdot (\varpi_\a^* \eta)).
 $$
For $x\in S$, we have $\o_E|_{E_x} =\sum_\a \rho_\a(x) \omega_0=\omega_0$, using that the
changes of trivializations preserve $\o_0$. Then using the decomposition $T_xE = T_xS \oplus E_x$,
we have that $(\o_E)^2(x) = \o_S(x) \wedge \o_0 >0$. Therefore $\o_E$ is symplectic on 
the zero section $S \subset E$.
Since this is an open condition, it holds in some neighborhood $B_c(S)$ of the zero section.

For the last part, just take an open cover $U_\a$ of $S-V'$ together with $V$ in the construction
above.
\end{proof}

The submanifold $S\subset E$ and
any fiber $E_x\subset E$ are symplectic, and they are symplectically orthogonal.

\smallskip

Now we move to the definition of plumbing as a symplectic neighbourhood of the union of
two intersecting symplectic surfaces $S_1,S_2$. Take points $P_1,\ldots, P_m\in S_1$
and $Q_1,\ldots, Q_m\in S_2$. We define 
 $$
 S=S_1\sqcup S_2/ P_i\sim Q_i, i=1,\ldots, m
  $$
and we can write $S=S_1\cup S_2$. Let now $E_1 \to S_1$ and $E_2 \to S_2$ be two
complex line bundles, where $d_i=c_1(E_i)$ which is
the self-intersection of $S_i$ inside $E_i$, $d_i=S_i^2$. Take hermitian metrics on the 
line bundles, so that $B_c(S_1) \subset E_1$ and $B_c(S_2)\subset E_2$ are symplectic
manifolds for $c>0$ by using Lemma \ref{lem:2.1}.

For each $i=1,\ldots,m$, take small neighbourhoods $B(P_i)\subset S_1$,
symplectomorphic to the ball $B_c(0)$ via $f_{1i}:B(P_i) \to B_c(0)$. Take
a trivialization $\varphi_{1i}:E_1|_{B(P_i)} \stackrel{\cong}{\too} B(P_i) \x \CC$.
Therefore we have
 \begin{equation}\label{eqn:ot1}
  (f_{1i}\x \Id)\circ  \varphi_{1i}: B_c(S_1) \cap E_1|_{B(P_i)} \stackrel{\cong}{\too}  B_c(0) \x B_c(0)
  \end{equation}
Using Lemma \ref{lem:2.1}, we endow $E_1$ with a $2$-form $\o_{E_1}$ such that 
$(B_c(S_1),\o_{E_1})$ is symplectic  and the symplectic form is a product on 
$B_c(S_1) \cap E_1|_{B(P_i)}$. This means that (\ref{eqn:ot1}) is a symplectomorphism.
We do the same for $Q_i\in S_2$, obtaining a symplectomorphism
$f_{2i}:B(Q_i) \to B_c(0)$, a trivialization $\varphi_{2i}:E_2|_{B(Q_i)} \stackrel{\cong}{\too} B(Q_i) \x \CC$,
a symplectic form $\o_{E_2}$ on $B_c(S_2)$, and a symplectomorphism
$$
  (f_{2i}\x \Id)\circ  \varphi_{2i}: B_c(S_2) \cap E_2|_{B(Q_i)} \stackrel{\cong}{\too}  B_c(0) \x B_c(0)
$$
  
Let  $R:B_c(0)\x B_c(0)\to B_c(0)\x B_c(0)$, $R(z_1,z_2)=(z_2,z_1)$, be the map reversal of coordinates, which is a symplectomorphism
swapping horizontal and vertical directions. Then we take the gluing map
 $$
 \Phi_i= \left((f_{2i}\x \Id)\circ  \varphi_{2i}\right)^{-1}\circ R \circ 
 \left((f_{1i}\x \Id)\circ  \varphi_{1i}\right): B_c(S_1) \cap E_1|_{B(P_i)}  \to  B_c(S_2) \cap E_2|_{B(Q_i)}
 $$

\begin{definition}
We define the \emph{symplectic plumbing} $P_c(S_1\cup S_2)$ of $S=S_1\cup S_2$ as the
symplectic manifold
 $$
  X= (B_c(S_1)\sqcup B_c(S_2))/ x\sim \Phi_i(x), x \in B_c(S_1) \cap E_1|_{B(P_i)}, i=1,\ldots, m
  $$
\end{definition}

Note that $S_1\cup S_2\subset P_c(S_1\cup S_2)$ are symplectic submanifolds 
and they intersect transversely.

%%%%%%%%%%%%%%%%%%%%%%%%%%%%%%%%%%%%%%%%%%%
\subsection{Symplectic tubular neighbourhood} \label{subsec:tubular}
%%%%%%%%%%%%%%%%%%%%%%%%%%%%%%%%%%%%%%%%%%%

We need a symplectic tubular neighbourhood theorem for two intersecting surfaces
$S_1\cup S_2$. We start with the case of a single submanifold. We include the proof since
our result is a minor modification of the one appearing in the literature.

\begin{proposition}[Symplectic tubular neighborhood] \label{prop:tubular1}
Suppose that $(X,\o)$ and $(X',\o')$ are two symplectic $4$-manifolds (maybe open)
with compact symplectic surfaces $S \subset X$ and $S' \subset X'$.
Suppose that $S$ and $S'$ are symplectomorphic as symplectic manifolds via $f:S \to S'$,
and assume also that their normal bundles are smoothly isomorphic.
%,equivalently, $S$ and $S'$ have diffeomorphic tubular neighborhoods. 

Let $V,V'$ be tubular neigbourhoods of $S$ and $S'$, with projections $\pi:V\to S$, $\pi':V'\to S'$,
and let $g:V\to V'$ be a diffeomorphism of tubular neighbourhoods of $S$ and $S'$ with $g|_S=f$.
Let $W\subset S$, $W'\subset S'$ such that $g|_{\pi^{-1}(W)}:\pi^{-1}(W)\to \pi'^{-1}(W')$ is a 
symplectomorphism. Suppose that $H^1(W)=0$, and let $\hat W \Subset W$.
Then there are tubular neighborhoods $S\subset U\subset X$ and
$S'\subset U'\subset X'$ which are symplectomorphic via $\varphi:U\to U'$, where $\varphi|_S=f$
and $\varphi|_{U\cap \pi^{-1}(\hat W)}=g$.
\end{proposition}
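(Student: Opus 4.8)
The statement says: given symplectomorphic compact symplectic surfaces $S, S'$ with smoothly isomorphic normal bundles, and a diffeomorphism $g$ of tubular neighborhoods that is already a *symplecto*morphism over some region $W$, we want to promote $g$ to a genuine symplectomorphism of (possibly smaller) tubular neighborhoods, while keeping it unchanged over a slightly smaller region $\hat W \Subset W$. The cohomological hypothesis $H^1(W)=0$ is the tell-tale sign that the proof is via Moser's method (the deformation/homotopy argument), where one solves $d\alpha = \beta$ for a primitive, and vanishing of $H^1$ guarantees the primitive exists on $W$ so that one can patch without disturbing $g$ there.

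Let me sketch the steps. First I would pull back the symplectic form $\omega'$ on $X'$ via $g$ to obtain a second symplectic form $g^*\omega'$ on $V\subset X$. Both $\omega$ and $g^*\omega'$ are symplectic forms defined near $S$, and because $g|_S = f$ is a symplectomorphism $S\to S'$, they agree when restricted to $TX|_S$ — at least along the zero section, and after a standard linear-algebra adjustment (possibly composing $g$ with a fiberwise bundle isomorphism using that the normal bundles are isomorphic) I can arrange that $\omega = g^*\omega'$ on the whole of $TX|_S$, i.e. they agree to zeroth order along $S$. The goal is then to interpolate: set $\omega_t = (1-t)\omega + t\, g^*\omega'$, check this is symplectic in a neighborhood of $S$ (true near $S$ since the two forms agree on $TX|_S$, so the interpolation is nondegenerate on a neighborhood by an open condition), and solve Moser's equation for a time-dependent vector field $X_t$ whose flow $\psi_t$ satisfies $\psi_t^*\omega_t = \omega$. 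Then $\varphi = g\circ\psi_1$ is the desired symplectomorphism $U\to U'$.

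The \textbf{main obstacle}, and where the hypotheses $H^1(W)=0$ and $\hat W\Subset W$ really enter, is controlling the Moser isotopy so that it is the \emph{identity} over $\hat W$. Since $g$ is already symplectic over $\pi^{-1}(W)$, we have $\omega = g^*\omega'$ there, so $\tfrac{d}{dt}\omega_t = g^*\omega'-\omega$ vanishes over $\pi^{-1}(W)$; to run Moser I must write $g^*\omega'-\omega = d\sigma$ for a $1$-form $\sigma$ that I can take to vanish over $\hat W$. Producing such a $\sigma$ is exactly a relative Poincaré-lemma / de Rham problem: the form $g^*\omega'-\omega$ is exact (it is zero along $S$ and we are in a tubular neighborhood which retracts to $S$), and the condition $H^1(W)=0$ lets me modify any chosen primitive by a closed $1$-form on $W$ so that it vanishes on the smaller $\hat W$, using a cutoff supported in $W\setminus\hat W$. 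With $\sigma\equiv 0$ on $\hat W$, the Moser vector field $X_t$ (defined by $i_{X_t}\omega_t = -\sigma$) vanishes there, so its flow fixes $\pi^{-1}(\hat W)$ pointwise, giving $\varphi=g$ on $U\cap\pi^{-1}(\hat W)$ as required.

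Two technical points I would be careful about, but not belabor: shrinking the neighborhood so that $\omega_t$ stays nondegenerate and the flow $\psi_t$ stays defined for all $t\in[0,1]$ (standard, using compactness of $S$ and shrinking the fiber radius $c$); and the initial zeroth-order matching of $\omega$ and $g^*\omega'$ along $S$, which uses the isomorphism of normal bundles to adjust $g$ in the fiber directions before starting Moser. The cohomological bookkeeping for the primitive is the genuinely delicate part, since it must simultaneously (i) exist on the whole tubular neighborhood, (ii) vanish along $S$ so that $\varphi|_S=f$ is preserved, and (iii) vanish on $\pi^{-1}(\hat W)$ so that $\varphi=g$ there; reconciling (i)--(iii) is precisely what $H^1(W)=0$ together with $\hat W\Subset W$ is designed to make possible.
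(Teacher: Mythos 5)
Your proposal is correct and follows essentially the same route as the paper's proof: a relative Moser--Weinstein argument in which one first adjusts $g$ so that $\omega$ and $g^*\omega'$ agree on $TX|_S$, interpolates linearly, builds a primitive $\mu$ of $g^*\omega'-\omega$ vanishing along $S$ and over $\pi^{-1}(\hat W)$ (the latter exactly via $H^1(W)=0$, writing $\mu=df$ on $\pi^{-1}(W)$ and cutting off with a bump function supported in $W$), and finally composes the Moser flow with $g$. The only detail treated more explicitly in the paper is forcing $\mu$ to vanish pointwise on $S$ (done by a partition-of-unity correction chosen compatibly with $\hat W$), which you correctly flag as needed but fold into the general bookkeeping.
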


\begin{proof}
This is an extension of the symplectic tubular neighbourhood theorem  \cite{Silva2}, which is the case where
$W$ is empty. Let $g:V\to V'$ be the diffeomorphism of tubular neighbourhoods where $g|_S=f$. 
We start by isotopying $g$ so that $d_x g:T_xX \to T_{g(x)}X'$ is a linear symplectic map, for all $x\in S$.
%{\color{red} (el referee pregunta si aquí se usa el truco de Moser o qué, esto mejor te lo dejo a ti)}.
We do this without modifying $g$ on $\pi^{-1}(\hat W)$, since $g$ is symplectic there. Then the symplectic orthogonal to $T_xS\subset T_xV$ is sent
to the symplectic orthogonal to $T_{f(x)}S'\subset T_{f(x)}V'$.

We take $\o_0=\o$ and $\o_1=g^*\o'$ and note that  $i^*(\o_1 - \o_0)=0$, where $i:S\to V$ is the 
inclusion map. As $i^*:H^2(V) \to H^2(S)$ is an isomorphism, we have that $[\o_1-\o_0]=0$,
hence there exists a $1$-form $\mu \in \Omega^1(V)$ such that $d \mu = \o_1 - \o_0$. 
We can suppose that $i^* \mu=0$, since otherwise 
we would consider the form $\mu - \pi^* i^* \mu$. 
%, where $\pi:V\to S$ is a retraction (normal projection). 

Take an open set $\tilde W$ such that $\hat W\Subset \tilde W \Subset W$.
We can also suppose that $\mu|_{\pi^{-1}(\tilde W)}=0$. 
As $\o_1-\o_0=0$ on $\pi^{-1}(W)$,  $d\mu=0$ on $\pi^{-1}(W)$, and hence $\mu=df$ for some function $f\in C^\infty(\pi^{-1}(W))$, since 
we are assuming that $H^1(W)=0$. As $i^*\mu=0$ we can change $f$ by $f-\pi^*i^*f$, so that $df=\mu$ and $i^*f=0$.
Let  $\rho$ be a step function on $S$ such that $\rho|_{\tilde W}\equiv 1$ and $\rho \equiv 0$ outside $W$. Then
we can substitute $\mu$ by $\mu-d((\pi^*\rho) f)$. 

We can also suppose that the restriction $\mu|_S=0$.
In local coordinates $(x_1,x_2,y_1,y_2)$ where $S=\{(x_1,x_2,0,0)\}$, we have
$\mu= \sum a_j(x_1,x_2) dy_j + O(y)$.
We cover $S$ with balls $B_\a$, and then $(\mu|_S)|_{B_\a}=\sum a_j^\a dy_j^\a$.
The balls are chosen so that they are inside $S-\hat W$ or inside $\tilde W$. 
Take a partition of unity $\{\rho_\a\}$ subordinated to it. We define $k_\a =\sum a_j^\a y_j^\a$
and $k=\sum \rho_\a k_\a$. For those $B_\a\subset \tilde W$, we can take $k_\a=0$.  
Then $dk|_S= \mu|_S$, and we can substitute $\mu$ by $\mu-dk$. Note that
$k=0$ on $\pi^{-1}(\hat W)$, so we keep $\mu|_{\pi^{-1}(\hat W)}=0$.

Now consider the form $\o_t= t \o_1 + (1-t) \o_0= \o_0 + t \, d \mu$, for $0 \le t \le 1$. 
Since $d_xg$ is a symplectomorphism for all $x\in S$, we have $\o_1|_S=\o_0|_S$ and
hence $\o_t|_S=\o_0|_S$ is symplectic over all points of $S$. So, reducing $V$ if necessary,
$\o_t$ is symplectic on some neighborhood $V$ of $S$.
The equation $\iota_{X_t} \o_t= - \mu$ admits a unique solution $X_t$ which is a vector field on $V$.
By the above, $X_t|_S=0$ and $X_t|_{\hat W}=0$. Take the flow $\varphi_t$ of the family of vector fields $X_t$.
There is some $U\subset V$ such that $\varphi_t(U) \subset V$ for all $t \in [0,1]$.
Moreover $\varphi_0= \Id_{U}$, and $\varphi_t|_S= \Id_S$ and $\varphi_t|_{\hat W}=\Id_{\hat W}$. 
We compute
 \begin{align*}
 \frac{d }{dt}\Big|_{t=s} \varphi_t^* \o_t &= 
  \varphi_s^*\left({L}_{X_s} \o_s\right) +  \varphi_s^* (d \mu)  
 = \varphi_s^* \left(d(\iota_{X_s} \o_s) + \iota_{X_s} d \o_s \right) + \varphi_s^* d \mu  \\
  &= -\varphi_s^* (d\mu) + \varphi_s^* (d \mu) =0.
 \end{align*}
This implies that $\o_0= \varphi_0^* \o_0 = \varphi_1^* \o_1$.
So $\varphi_1:(U, \omega) \to (V,g^*\o')$ is a symplectomorphism.
The composition  $\varphi=g\circ \varphi_1: (U,\omega) \to (V',\o')$ is a symplectomorphism
of $U$ onto $U'=\varphi(U)\subset V'$.
\end{proof}

%%%%%%%%%%%%%%%%%%%%%%%%%%%%%%%%%%%%
\subsection{Symplectic tubular neighbourhood of two intersecting submanifolds}
%%%%%%%%%%%%%%%%%%%%%%%%%%%%%%%%%%%%

Now we move to the case of the union of two intersecting symplectic submanifolds.

\begin{definition}
Let $(X,\o)$ be a symplectic $4$-manifold.
We say that two symplecic surfaces $S_1,S_2\subset X$ intersect $\o$-orthogonally
if for every $\p\in S_1\cap S_2$ there are (complex) Darboux coordinates $(z_1,z_2)$ such
that $S_1=\{z_2=0\}$ and $S_2=\{z_1=0\}$ around $p$.
\end{definition}

By definition, $S_1$ and $S_2$ intersect $\o$-orthogonally in the symplectic plumbing $P_c(S_1\cup S_2)$.

\begin{lemma}[{\cite[Lemma 6]{MRT}}] \label{lem:symplectic-orthogonal}
Let $(X,\o)$ be a symplectic $4$-manifold, and suppose that $S_1, S_2 \subset X$ 
are symplectic surfaces intersecting transversely and positively. 
Then we can perturb $S_1$ to get another surface $S'_1$ in such a way that:
\begin{enumerate}
\item The perturbed surface $S'_1$ is symplectic.

\item The perturbation is small in the $C^0$-sense
and only changes $S_1$ near the intersection points with $S_2$, leaving
these points fixed, i.e.\ $S_1 \cap S_2= S'_1 \cap S_2$.

\item $S'_1$ and $S_2$ intersect $\o$-orthogonally.
\end{enumerate}
\end{lemma}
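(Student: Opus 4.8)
The plan is to reduce the statement to a purely local problem near each intersection point and there to perturb $S_1$ in two stages: first flattening it onto its tangent plane, then rotating that tangent plane into the symplectic-orthogonal position. Since $S_1\cap S_2$ is a finite set of transverse points, I would fix disjoint small balls around them and perform independent perturbations supported in each, so it suffices to treat one point $p\in S_1\cap S_2$. Around $p$ I would apply the symplectic neighbourhood theorem (Darboux for a symplectic submanifold) to obtain complex Darboux coordinates $(z_1,z_2)$ with $p=0$, $\o=\o_0$ standard, and $S_2=\{z_1=0\}$; the symplectic orthogonal to $T_0S_2$ is then $\{z_2=0\}$. As $S_1$ is transverse to $S_2$, near $0$ it is a graph $z_2=h(z_1)$ with $h(0)=0$, and a direct computation of the induced area form shows that $S_1$ is symplectic exactly when $|\bd_{\bar z_1}h|^2-|\bd_{z_1}h|^2<1$. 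The goal becomes: deform $h$ to $0$ near $0$ through functions satisfying this open inequality, with $C^0$-small change, supported near $0$, keeping $h(0)=0$.

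\emph{Stage 1 (flatten to the tangent plane).} Let $\ell=dh(0)$ be the linear part, so $h-\ell=O(|z_1|^2)$. With a cutoff $\b$ equal to $1$ on $B_{\delta/2}$ and $0$ outside $B_\delta$, I would interpolate $h_t=h+t\,\b\,(\ell-h)$ for $t\in[0,1]$. The only dangerous term in $dh_t$ is $d\b\cdot(\ell-h)$, but $|d\b|\le C/\delta$ while $|\ell-h|\le C\delta^2$ on $B_\delta$, so this term is $O(\delta)$; together with $\b(d\ell-dh)\to 0$ by continuity of $dh$ at $0$, this gives $dh_t\to d\ell$ uniformly as $\delta\to 0$. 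Since the linear graph $\ell$ satisfies the symplectic inequality strictly, all $h_t$ are symplectic once $\delta$ is small. After this stage $S_1$ equals the linear plane $\{z_2=\ell(z_1)\}$ on $B_{\delta/2}$ and is unchanged outside $B_\delta$.

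\emph{Stage 2 (rotate to orthogonal).} Writing $\ell(z_1)=a z_1+b\bar z_1$, the symplectic condition reads $|b|^2-|a|^2<1$; this region is connected and contains $(a,b)=(0,0)$, which corresponds to the orthogonal plane $\{z_2=0\}$ (positivity of the intersection is precisely what places $(a,b)$ on this side, rather than in the unreachable region $|b|^2-|a|^2>1$). I would connect $(a,b)$ to $(0,0)$ along $(a_s,b_s)=(1-s)(a,b)$, giving linear symplectic planes $\{z_2=(1-s)\ell(z_1)\}$ that stay transverse and positive to $S_2$, and realize this path as a perturbation supported inside $B_{\delta/2}$, equal to $\ell$ near the boundary and to $0$ near $p$, so that in the end $S_1=\{z_2=0\}$ and $S_2=\{z_1=0\}$ in the coordinates $(z_1,z_2)$; this is exactly $\o$-orthogonality at $p$.

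The main obstacle is maintaining the open symplectic condition throughout Stage 2. A single cutoff interpolation between the two linear planes introduces a term $s'(|z_1|)\,\ell(z_1)$ in the differential whose size is $O(\|\ell\|)$ \emph{independent} of $\delta$ (the factor $|z_1|\sim\delta$ cancels the $1/\delta$ coming from the cutoff), so when $T_0S_1$ is far from orthogonal to $T_0S_2$ this can violate the inequality. I would resolve this by subdividing the path $0=s_0<\cdots<s_N=1$ into many small rotations and carrying each one out on a nested family of geometric annuli $r_k=(\delta/2)\theta^k$; the cost of the $k$-th transition is then $O(\|\ell\|/N)$ uniformly in $k$, so choosing $N$ large keeps every intermediate graph symplectic. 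Collecting the two stages, the resulting $S_1'$ is symplectic, agrees with $S_1$ away from $p$, fixes $S_1\cap S_2$, and meets $S_2$ $\o$-orthogonally, which gives all three conclusions.
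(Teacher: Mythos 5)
Your proposal is correct, but there is nothing in this paper to compare it against: the lemma is stated here as a quotation of Lemma~6 of \cite{MRT} and is not proved in the present text, so the only meaningful comparison is with the argument in that reference. Measured against that, your proof follows essentially the same (and standard) route: localize at the finitely many transverse intersection points; take Darboux coordinates adapted to $S_2$ so that $S_2=\{z_1=0\}$; write $S_1$ as a graph $z_2=h(z_1)$; observe that being symplectic with positive intersection is the open condition $|\partial_{\bar z_1}h|^2-|\partial_{z_1}h|^2<1$; flatten $h$ to its linear part $\ell$ at cost $O(\delta)$ using $h-\ell=O(|z_1|^2)$; and contract the linear graph to $\{z_2=0\}$ inside the region $\{|b|^2-|a|^2<1\}$, whose star-shapedness about the origin (and the fact that one lies in it at all) is exactly where positivity of the intersection enters. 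You also correctly isolated the one genuinely delicate analytic point: a single cutoff across an annulus of bounded modulus costs $O(\|\ell\|)$ in the differential no matter how small $\delta$ is, and this cannot be sidestepped by a linear change of chart, since the stabilizer of $\{z_1=0\}$ in $\mathrm{Sp}(4,\mathbb{R})$ is block-diagonal and hence also preserves $\{z_2=0\}$. Your remedy --- splitting the contraction into $N$ small steps supported on $N$ disjoint nested geometric annuli, each costing $O(\|\ell\|/N)$ --- is sound, and is precisely the discrete form of the usual logarithmic-cutoff trick ($|\rho'|\lesssim \varepsilon/|z_1|$ over an annulus of large modulus). One small precision worth adding: $|\partial_{\bar z_1}h|^2-|\partial_{z_1}h|^2<1$ characterizes graphs that are symplectic \emph{and} positively oriented relative to the $z_1$-projection (symplecticity alone only requires this quantity to be $\neq 1$); this is the right invariant to propagate, it is what the hypothesis $S_1\cdot S_2>0$ at $p$ gives you at the start, and it is preserved through both of your stages, so the argument closes.
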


Let $S=S_1 \cup S_2 \subset X$ be a union of two intersecting symplectic submanifolds of
a symplectic manifold $X$.
We use the expression \emph{tubular neighborhood} of $S$ to refer to a small neighborhood $U$ of $S$
in $X$ such that $S$ is a deformation retract of $U$. 

\begin{theorem}[Symplectic tubular neighborhood] \label{thm:mine}
Suppose that $(X,\o)$ and $(X',\o')$ are two symplectic $4$-manifolds (maybe open)
with compact symplectic surfaces $S_1, S_2 \subset X$ and $S'_1, S'_2 \subset X'$.
Assume that $S_1$ and $S_2$ intersect symplectically orthogonally, and similarly for $S'_1$ and $S'_2$.
Suppose that there is a map $f:S=S_1\cup S_2 \to S'=S'_1\cup S'_2$ which is
a symplectomorphism $f:S_1\to S_1'$ and a symplectomorphism $f:S_2\to S_2'$.
Assume also that the normal bundles $\nu_{S_1}\cong \nu_{S_1'}$ and
$\nu_{S_2}\cong \nu_{S_2'}$. Then, there are tubular neighborhoods $S\subset U\subset X$ and
$S'\subset U'\subset X'$ which are symplectomorphic via $\varphi:U\to U'$, with $\varphi|_S=f$.
\end{theorem}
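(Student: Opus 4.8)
The plan is to glue together the two single-surface tubular neighbourhood theorems (Proposition~\ref{prop:tubular1}) in a way that is compatible along the intersection locus. The key structural observation is that near each intersection point $p \in S_1 \cap S_2$, both $S$ and $S'$ look like the standard symplectically-orthogonal pair of coordinate planes $\{z_2=0\} \cup \{z_1=0\}$ in $(\CC^2,\o_0)$, by the $\o$-orthogonality hypothesis. So the first step is to fix, for each of the finitely many intersection points $p$ (and its image $f(p)$), a pair of Darboux charts in which $S$ and $S'$ are literally the coordinate planes, and to arrange that $f$ in these charts is the identity on the two axes. Using these charts I can build, once and for all, a local symplectomorphism $\Psi_p$ between a small polydisc neighbourhood of $p$ and one of $f(p)$ which restricts to $f$ on both $S_1$ and $S_2$; this is just the product symplectomorphism $B_c(0)\x B_c(0) \to B_c(0)\x B_c(0)$ coming from the two one-dimensional Darboux maps. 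Let $\hat W_1 \subset S_1$ and $\hat W_2 \subset S_2$ be small closed neighbourhoods of the intersection points inside each surface, chosen so that $\Psi_p$ is defined over $\pi^{-1}(\hat W_1) \cup \pi^{-1}(\hat W_2)$.

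**Next I would** apply Proposition~\ref{prop:tubular1} \emph{separately} to $S_1 \subset X$, $S_1' \subset X'$ and to $S_2 \subset X$, $S_2' \subset X'$, but feeding in the already-fixed local symplectomorphism near the intersection points as the prescribed ``$g$'' on $\pi^{-1}(\hat W)$. Concretely, for $S_1$ I take $W = W_1$ to be a slightly larger neighbourhood in $S_1$ of the intersection points with $H^1(W_1)=0$ (an open set retracting onto the finite set of punctures, so a disjoint union of discs — its $H^1$ vanishes), and $g$ on $\pi^{-1}(W_1)$ given by the $\Psi_p$ constructed above. Proposition~\ref{prop:tubular1} then produces a symplectomorphism $\varphi_1 \colon U_1 \to U_1'$ between tubular neighbourhoods of $S_1$ and $S_1'$ with $\varphi_1|_{S_1}=f$ and, crucially, $\varphi_1 = \Psi_p$ (hence $=g$) on $U_1 \cap \pi^{-1}(\hat W_1)$. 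Symmetrically I obtain $\varphi_2 \colon U_2 \to U_2'$ for $S_2$ with $\varphi_2|_{S_2}=f$ and $\varphi_2 = \Psi_p$ on $U_2 \cap \pi^{-1}(\hat W_2)$.

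**The main obstacle**, and the reason the prescribed-data version of Proposition~\ref{prop:tubular1} is needed rather than the plain tubular neighbourhood theorem, is consistency on the overlap: I must glue $\varphi_1$ and $\varphi_2$ into a single map $\varphi$ on a neighbourhood $U$ of all of $S = S_1 \cup S_2$, and this is only possible if $\varphi_1$ and $\varphi_2$ agree wherever their domains meet, namely in the polydisc regions around the intersection points. By construction both $\varphi_1$ and $\varphi_2$ coincide with the same local model $\Psi_p$ on those regions (this is exactly what the $\hat W$-fixing clause of Proposition~\ref{prop:tubular1} buys us), so they patch. I would therefore shrink $U_1,U_2$ to a common polydisc size near each $p$, verify that $U := U_1 \cup U_2$ is a tubular neighbourhood of $S$ (it deformation retracts onto $S$ since each piece retracts onto its surface and they are glued $\o$-orthogonally along the standard local model), and define $\varphi := \varphi_1$ on $U_1$ and $\varphi := \varphi_2$ on $U_2$. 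The overlap agreement makes $\varphi$ well-defined and smooth, it is a symplectomorphism because each $\varphi_i$ is, and $\varphi|_S = f$ since $\varphi_1|_{S_1}=f$ and $\varphi_2|_{S_2}=f$. The only genuinely delicate points to check are that the $\hat W_i$ can be chosen large enough that the local models overlap on an honest open set (so the glued map is smooth, not merely continuous) and that the retraction directions from the two tubular neighbourhoods match along the intersection — both of which follow from having set up the Darboux charts around each $p$ as an honest product splitting $T_p X = T_p S_1 \oplus T_p S_2$ with the two factors symplectically orthogonal.
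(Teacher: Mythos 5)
Your proposal is correct and follows essentially the same route as the paper's proof: build a product symplectomorphism $h_1\times h_2$ in Darboux charts around each intersection point that restricts to $f$ on both axes, feed it as the prescribed local data into the relative tubular neighbourhood theorem (Proposition~\ref{prop:tubular1}) applied separately to $S_1$ and $S_2$, and glue the resulting symplectomorphisms, which agree on the overlap precisely because the tubular neighbourhoods are shrunk until their intersection lies inside the region where both coincide with the fixed local model.
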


\begin{proof}
Take a point $P_i\in S_1\cap S_2$. Let $\varphi_i:B_i \to B_\e(0)\subset \CC^2$ be Darboux
coordinates so that $S_1=\{z_2=0\}$ and $S_2=\{z_1=0\}$, $\varphi_i(P_i)=0$.
For $f(P_i)\in S_1'\cap S'_2$ we also take $\varphi_i':B_i' \to B_\e(0)\subset \CC^2$ Darboux
coordinates so that $S_1'=\{z_2'=0\}$ and $S'_2=\{z_1'=0\}$, $\varphi_i'(f(P_i))=0$.
The composite $(\varphi_i')^{-1} \circ \varphi_i:B_i\to B_i'$ may not coincide with
$f$ on $B_i\cap (S_1\cup S_2)$. To arrange this, take 
 \begin{align*}
 &h_1=\varphi_i' \circ (f|_{B_i\cap S_1}) \circ \varphi_i^{-1}: B_{\e'}(0) \x \{0\} \to B_\e(0) \x \{0\} \\
 &h_2=\varphi_i' \circ (f|_{B_i\cap S_2}) \circ \varphi_i^{-1}: \{0\} \x B_{\e'}(0) \to \{0\} \x B_\e(0) 
 \end{align*}
which are symplectomorphisms onto their image. Then $h=h_1\x h_2$ is a symplectomorphism
of $\CC^2$ on a neighbourhood of the origin. So consider the symplectomorphism
  $$
  \psi_i=(\varphi_i')^{-1} \circ h \circ \varphi_i:W_i\to W_i'
   $$
defined on a neighbourhood $W_i\subset B_i$. It satisfies 
$\psi_i|_{B_i\cap (S_1\cup S_2)}= f|_{B_i\cap (S_1\cup S_2)}$. Fix also $\hat W_i\Subset W_i$,
and denote $W=\bigcup W_i$, $\hat W=\bigcup \hat W_i$, $W_i'=\psi_i(W_i)$, $W'=\bigcup W_i'$,
$\hat W_i'=\psi_i(\hat W_i)$, $\hat W'=\bigcup \hat W_i'$, and $\psi: W\to W'$
the map which is $\psi_i$ on each $W_i$.

Now take small tubular neighbourhoods $U_1,U_2$ of $S_1,S_2$ respectively. Then
$U_1\cap U_2$ is a neighbourhood of the intersection $S_1\cap S_2$, and can be made
as small as we want. We require that $U_1\cap U_2\subset \hat W$. We also
take neighbourhoods $U_1',U_2'$ of $S_1',S_2'$ respectively such that $U_1'\cap U_2'\subset
\hat W'$. We can define diffeomorphisms $g_j:U_j \to U_j'$ with $g_j|_{S_j}=f|_{S_j}$ and
$g_j|_{\tilde W \cap U_j}=\psi|_{\tilde W \cap U_j}$ for some $\hat W\Subset \tilde W \Subset W$, for $j=1,2$.
Apply Proposition \ref{prop:tubular1} to $g_j$, to obtain symplectomorphisms
 $\varphi_j: V_j  \to V_j'$, where $S_j\subset V_j \subset U_j$ and 
$S_j'\subset V_j' \subset U_j'$, such that $\varphi_j|_{S_j}=f|_{S_j}$ and $\varphi_j|_{\hat W\cap V_j}= 
\psi|_{\hat W \cap V_j}$. As $V_1\cap V_2 \subset U_1\cap U_2\subset \hat W$, we have that
 $\varphi_1,\varphi_2$ coincide in the overlap region, defining thus a symplectomorphism
  $$
   \varphi: V_1\cup V_2 \to V_1'\cup V_2'
   $$
 with $\varphi|_S=f|_S$.
\end{proof}

\begin{corollary} \label{cor:PcS}
Let $(X,\o)$ be a symplectic $4$-manifold and $S_1,S_2\subset X$ two compact symplectic surfaces intersecting
symplectically orthogonally. Then there is a neighbourhood $U$ of $S=S_1\cup S_2$ which is symplectomorphic
to a symplectic plumbing $P_c(S)$.
\end{corollary}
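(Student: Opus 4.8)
The plan is to take the abstract symplectic plumbing $P_c(S)$ as the comparison manifold $X'$ in Theorem \ref{thm:mine} and to apply that theorem to the tautological matching of $S$ with the union of the two zero sections. Everything then reduces to checking the hypotheses of that theorem and to a small amount of bookkeeping to land on a genuine plumbing rather than an arbitrary neighbourhood.

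First I would record the data of $S=S_1\cup S_2\subset X$: the self-intersections $d_i=[S_i]^2$ and the finite set of (positive, transverse) intersection points $S_1\cap S_2$. To build the model, I would choose hermitian complex line bundles $E_i\to S_i$ with $c_1(E_i)=d_i$, take the marked points $P_i\in S_1$ and $Q_i\in S_2$ of the plumbing construction to be the intersection points of $S$ (each one regarded once in $S_1$ and once in $S_2$), and form $X'=P_c(S)$ with the symplectic form produced by Lemma \ref{lem:2.1} together with the gluing maps $\Phi_i$. The surfaces of the model are then the zero sections $S_1',S_2'\subset P_c(S)$.

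Next I would verify the hypotheses of Theorem \ref{thm:mine}. Symplectic orthogonality of $S_1'$ and $S_2'$ holds by the remark following the definition of the plumbing, and that of $S_1,S_2$ is assumed. By Lemma \ref{lem:2.1} the inclusion $(S_i,\o|_{S_i})\hookrightarrow(B_c(S_i),\o_{E_i})$ is symplectic and $\o_{E_i}$ restricts to $\o|_{S_i}$ on the zero section, so the tautological identification $f\colon S_i\to S_i'$ is a symplectomorphism; since the marked points are exactly the intersection points of $S$, the two restrictions $f|_{S_1}$ and $f|_{S_2}$ agree on $S_1\cap S_2$ and therefore define a single map $f\colon S\to S'$ of the union. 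Finally $\nu_{S_i}$ is an oriented real rank-$2$ bundle over $S_i$ with Euler number $[S_i]^2=d_i$; such bundles are classified by this integer, so $\nu_{S_i}\cong E_i=\nu_{S_i'}$ smoothly.

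Theorem \ref{thm:mine} then yields tubular neighborhoods $S\subset U\subset X$ and $S'\subset U'\subset P_c(S)$ together with a symplectomorphism $\varphi\colon U\to U'$ satisfying $\varphi|_S=f$. The only point that is not purely formal is that $U'$ is a priori merely some tubular neighborhood of $S'$ rather than a plumbing. I would dispose of this using that the plumbings $P_{c'}(S)$, for $c'\le c$, form a nested exhaustion of a neighbourhood of $S'$: for $c'$ small enough $P_{c'}(S)\subset U'$, and then $\varphi^{-1}(P_{c'}(S))$ is a neighbourhood of $S$ in $X$ that $\varphi$ carries symplectomorphically onto the symplectic plumbing $P_{c'}(S)$, which is exactly the conclusion. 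I expect the main (indeed essentially the only) obstacle to be this bookkeeping — arranging the bundles $E_i$ and the marked points so that $f$ descends to the union and the normal data matches — after which the substance of the statement is entirely contained in Theorem \ref{thm:mine}.
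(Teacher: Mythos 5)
Your proposal is correct and follows essentially the same route as the paper: the paper's proof likewise takes the marked points to be the intersection points, chooses line bundles $E_j\to S_j$ with $c_1(E_j)=[S_j]^2$, forms $P_c(S)$, and applies Theorem \ref{thm:mine} to $S\subset X$ and $S\subset P_c(S)$. Your explicit verification of the hypotheses and the final shrinking step (replacing the neighbourhood $U'$ by a smaller plumbing $P_{c'}(S)\subset U'$ and pulling back) merely spell out details the paper leaves implicit.
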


\begin{proof}
 Let $i_j: S_j \inc S$ be the inclusion map, and denote $\{P_1,\ldots, P_m\}= i_1^{-1}(S_1\cap S_2) \subset S_1$
 and $\{Q_1,\ldots, Q_m\}= i_2^{-1}(S_1\cap S_2) \subset S_2$. Take complex line 
 bundles $E_j\to S_j$ with $c_1(E_j)=d_j=[S_j]^2$, and define a symplectic plumbing $P_c(S_1\cup S_2)$ with
 these data. %Note that the self-intersection $[S_j]^2=d_j$ whenever $P_c(S)$ is embedded in a compact $4$-manifold.
  Now apply Theorem \ref{thm:mine} to $S\subset X$ and $S\subset P_c(S)$.
 \end{proof}

 \begin{corollary} \label{cor:PcS2}
 Let $(S_1,\o_1), (S_2,\o_2)$ and $(S_1',\o'_1), (S'_2,\o'_2)$ be compact symplectic surfaces. Consider a symplectic 
 plumbing $P_c(S_1\cup S_2)$ with $\# S_1\cap S_2 =m$ and $d_j=[S_j]^2$, $j=1,2$, and another symplectic 
 plumbing $P_c(S'_1\cup S'_2)$ with $\# S'_1\cap S'_2 =m'$ and $d'_j=[S_j']^2$, $j=1,2$. If 
 $m=m'$, $\la [\o_j],[S_j]\ra =\la [\o'_j],[S'_j]\ra$ and $d_j=d_j'$, $j=1,2$, 
 then there are neighbourhoods $S_1\cup S_2 \subset U \subset P_c(S_1\cup S_2)$
 and $S_1'\cup S_2' \subset U' \subset P_c(S'_1\cup S'_2)$ which are symplectomorphic.
 \end{corollary}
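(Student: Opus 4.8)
The plan is to reduce the whole statement to the symplectic tubular neighbourhood theorem, Theorem \ref{thm:mine}. In either plumbing the two surfaces intersect $\o$-orthogonally by construction (this was recorded right after the definition of $\o$-orthogonal intersection), so that hypothesis of Theorem \ref{thm:mine} is satisfied for free on both sides. What remains is to verify the two genuine inputs of that theorem: (i) a map $f:S_1\cup S_2\to S_1'\cup S_2'$ that restricts to a symplectomorphism on each of the two components, and (ii) smooth isomorphisms of the normal bundles $\nu_{S_j}\cong\nu_{S_j'}$ covering $f$. Once these are in hand, Theorem \ref{thm:mine} directly produces a symplectomorphism $\f:U\to U'$ of suitable neighbourhoods of $S_1\cup S_2$ and $S_1'\cup S_2'$ with $\f|_S=f$, which is exactly the claim.

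To build $f$, I first treat $S_1$ and $S_1'$. These are compact symplectic surfaces of the same area $\la[\o_1],[S_1]\ra=\la[\o_1'],[S_1']\ra$ and of the same genus, hence diffeomorphic (that the underlying surfaces are diffeomorphic is implicit in the plumbing data, which only records genus, area, self-intersection and the number of nodes). Fixing a diffeomorphism and pulling back, the two area forms on $S_1$ represent the same class in $H^2(S_1;\RR)\cong\RR$, so Moser's theorem yields a symplectomorphism $f_1:S_1\to S_1'$. This $f_1$ need not send the marked intersection points to the marked intersection points, so I postcompose it with a symplectomorphism of $S_1'$ carrying the tuple $f_1(P_1),\ldots,f_1(P_m)$ to $P_1',\ldots,P_m'$. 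Such a symplectomorphism exists because the symplectomorphism group of a connected surface acts transitively on ordered tuples of distinct points: the configuration space of $m$ distinct points on a connected surface is path connected, and a path of configurations is realized by the time-one flow of a compactly supported Hamiltonian isotopy. Performing the same construction for $S_2$, with the primed points labelled so that the identification $P_i\sim Q_i$ is matched by $P_i'\sim Q_i'$, gives $f_2:S_2\to S_2'$. Since $f_1(P_i)=P_i'$ and $f_2(Q_i)=Q_i'$ are the same point in $S'$, the maps $f_1,f_2$ agree on the glued points and therefore assemble into a single $f:S_1\cup S_2\to S_1'\cup S_2'$.

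For the normal bundles, recall that in the plumbing $\nu_{S_j}=E_j$ is the complex line bundle with $c_1(E_j)=d_j=[S_j]^2$, and likewise $\nu_{S_j'}=E_j'$ with $c_1(E_j')=d_j'$. Complex line bundles over a surface are classified by their first Chern class, and $f_j$ is orientation preserving, so the hypothesis $d_j=d_j'$ gives $f_j^*\nu_{S_j'}\cong\nu_{S_j}$ as complex, hence smooth, line bundles over $S_j$; this is precisely a smooth isomorphism of normal bundles covering $f_j$. With (i) and (ii) verified and $\o$-orthogonality automatic, Theorem \ref{thm:mine} applies and finishes the argument. The only delicate step is (i): arranging the per-component symplectomorphisms to match the distinguished intersection points while remaining symplectic, and this is exactly where Moser's theorem together with the transitivity of the symplectomorphism group on finite point configurations is used.
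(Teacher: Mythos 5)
Your proposal is correct and follows essentially the same route as the paper's proof: match each pair $S_j,S_j'$ by an area-preserving (Moser) symplectomorphism adjusted to carry the marked intersection points to the corresponding primed points, glue the two maps into $f:S_1\cup S_2\to S_1'\cup S_2'$, invoke the $\o$-orthogonality built into plumbings, and apply Theorem \ref{thm:mine}. The only difference is that you spell out two points the paper leaves implicit — the transitivity of the symplectomorphism group on ordered point configurations and the identification of normal bundles via $c_1$ — which is a useful, not a divergent, elaboration.
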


\begin{proof}
Note that two compact surfaces $\S,\S'$ are symplectomorphic if and only if they have the same area
$\la [\o],[\S]\ra =\la [\o'],[\S']\ra$. Moreover the symplectomorphism can be chosen so
that it sends some finite collection of $m$ points of $\S$ to another collection of $m$ points
of $\S'$. Applying this to $S_j,S_j'$, we get a symplectomorphism $f_j:S_j \to S_j'$ with
$f_j|_{S_1\cap S_2}:S_1\cap S_2 \to S_1' \cap S_2'$ sending the intersection points
in the required order, $j=1,2$. Therefore $f_1|_{S_1\cap S_2}=f_2|_{S_1\cap S_2}$,
thus defining a map $f:S_1\cup S_2\to S_1'\cup S_2'$. As the intersections are symplectically
orthogonal, we can apply Theorem \ref{thm:mine} to get the stated result.
 \end{proof}

 This gives uniqueness of symplectic plumbings. In particular, they do not depend on the
 choices of symplectomorphisms of the surfaces, or the choice of Darboux coordinates at
 the intersection points.

\begin{remark}
Theorem \ref{thm:mine} holds for a symplectic manifold $X$ of
any dimension, and symplectic submanifolds $S_1,S_2\subset X$ of complementary
dimension intersecting symplectically  orthogonally.

The plumbing can be defined for symplectic manifolds $S_1,S_2$ of any
dimension $2n$, and $P_c(S_1\cup S_2)$ will have dimension $4n$. 
\end{remark}

%%%%%%%%%%%%%%%%%%%%%%%%%%%%%%
\section{A configuration of symplectic surfaces in $\CP^2 \# 11 \overline{\CP}{}^2$.}\label{sec:3}
%%%%%%%%%%%%%%%%%%%%%%%%%%%%%%

\subsection{Homology of $\CP^2 \# 11 \overline{\CP}^2$}
Let $X=\CP^2 \# 11 \overline{\CP}^2$ be the symplectic manifold obtained
by blowing up the projective plane $\CP^2$ at $11$ points $\D=\{P_1,\ldots, P_{11}\}$. We call $h \in H_2(X)$
the homology class of the line, and $e_i$, $1 \le i \le 11$, the homology classes of the
exceptional divisors, so that $H_2(X)= \langle h, e_1, \dots , e_{11} \rangle$.
Moreover, the intersection form of $X$ is diagonal with respect to the basis $\{h, e_1,\ldots, e_{11}\}$.
Now consider the collection of homology classes in $H_2(X)$ given by:
\begin{align*}
c_k &= 3h - \sum_{i \ne k}^{11} e_i, \qquad 1 \le k \le 11, \\
d & = 10 h - \sum_{i=1}^{11} 3 e_i
\end{align*}

\begin{proposition}
The homology classes $\{c_1, \dots , c_{11}, d\}$ form a basis of $H_2(X)$.
The intersection form is diagonal with respect to this basis, and the self-intersections are
$c_k^2=-1$, for $1 \le k \le 11$, and $d^2=1$.
\end{proposition}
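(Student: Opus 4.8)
The plan is to deduce both assertions from direct computations with the diagonal intersection form on the standard basis $\{h,e_1,\dots,e_{11}\}$ of $H_2(X)$, for which $h^2=1$, $e_i^2=-1$, and all mixed products vanish.

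First I would compute the full Gram matrix of $\{c_1,\dots,c_{11},d\}$. The self-intersections are immediate: $c_k^2 = 9h^2 + \sum_{i\neq k}e_i^2 = 9 - 10 = -1$, since $c_k$ contains $10$ exceptional classes, and $d^2 = 100\,h^2 + 9\sum_{i=1}^{11}e_i^2 = 100 - 99 = 1$. For the off-diagonal entries, $c_k\cdot c_l$ with $k\neq l$ receives $9h^2$ together with $e_i^2$ ranging over the nine indices $i\neq k,l$ common to both classes, giving $9-9=0$; and $c_k\cdot d = 30\,h^2 + 3\sum_{i\neq k}e_i^2 = 30 - 30 = 0$. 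This already establishes that the intersection form is diagonal in the new classes, with $c_k^2=-1$ and $d^2=1$.

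It remains to verify that $\{c_1,\dots,c_{11},d\}$ is genuinely a $\ZZ$-basis, rather than merely a $\QQ$-basis or a proper finite-index sublattice. I would argue this through the determinant. Let $A$ be the integer matrix expressing the new classes in the standard basis; then the Gram matrices satisfy $G_{\mathrm{new}} = A\,G_{\mathrm{std}}\,A^{T}$, so $\det G_{\mathrm{new}} = (\det A)^2\,\det G_{\mathrm{std}}$. The step above gives $\det G_{\mathrm{new}} = (-1)^{11}\cdot 1 = -1$, while $\det G_{\mathrm{std}} = (-1)^{11} = -1$; hence $(\det A)^2 = 1$, so $\det A = \pm 1$ and the twelve classes indeed form a basis of $H_2(X)$. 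Equivalently, and more concretely, one can invert explicitly: writing $S=\sum_{i=1}^{11}e_i$, one checks that $\sum_{k=1}^{11}c_k = 33h - 10S$ and $d = 10h - 3S$ form a $2\times 2$ system with determinant $33\cdot(-3)-(-10)\cdot 10 = 1$, so $h$ and $S$ lie in the $\ZZ$-span of the new classes; then $e_k = c_k - 3h + S$ lies in it as well, recovering the whole standard basis over $\ZZ$.

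Since the intersection computations are entirely mechanical, the only genuine point is this last integrality step. I expect no serious obstacle: the unimodularity of the intersection form of $X$ makes the determinant argument decisive, and the explicit inversion furnishes an independent check that the change of basis is invertible over $\ZZ$.
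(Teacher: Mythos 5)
Your proposal is correct and takes essentially the same approach as the paper: compute the Gram matrix of $\{c_1,\dots,c_{11},d\}$ directly from the diagonal form on $\{h,e_1,\dots,e_{11}\}$, observe it is diagonal with determinant $-1$, and conclude from unimodularity of $H_2(X)$ that the change of basis is invertible over $\ZZ$. Your explicit inversion via $S=\sum_{i=1}^{11}e_i$ is a pleasant independent check, but the determinant argument alone is exactly the paper's proof.
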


\begin{proof}
It follows from the fact that $e_i \cdot h=0$ for all $i$, $e_i^2=-1$ and $h^2=1$.
This implies that the determinant of the intersection form with respect to this basis is $-1$, 
hence it is a basis over $\ZZ$.
\end{proof}

Our focus is to prove that the basis $\{c_1, \dots , c_{11}, d\}$ of $H_2(X)$ can be realized
by symplectic surfaces. For this, we need the following configuration
of symplectic surfaces in $\CP^2$:
\begin{itemize}
\item Eleven symplectic surfaces $C_1, \dots , C_{11}$ such that its homology classes are
$[C_i]=3h$ in $\CP^2$. These surfaces $C_i$, being cubics, must have $g=1$ 
by the symplectic adjunction formula. The surface $C_i$ is required to pass through
the $10$ points in $\D-\{P_i\}$, but not through $P_i$. Therefore, the proper transform
$\tilde C_i$ of $C_i$ in the blow-up $X=\CP^2 \# 11 \overline{\CP}{}^2$ of $\CP^2$ at 
$S$, has homology class $[\tilde C_i]=c_i$. 

\item The intersection  $C_i \cap C_j$ contains the $9$ points $\D- \{P_i,P_j\}$, for $i\neq j$.
Note that the algebraic intersection is $C_i\cdot C_j=9$.
If these intersections are transverse and positive (e.g.\ if $C_i$ are holomorphic around the intersection points)
and if there are no more intersections, then the proper transfroms $\tilde C_i, \tilde C_j$ are disjoint.

\item One singular symplectic surface $G$ such that $[G]=10h$ and $G$ has $11$
ordinary triple points at the points of $\D$. By the adjunction formula the genus of $G$ is
 $$
 g= \frac12 (10-1)(10-2) -11 \frac{3\cdot 2}2 = 36-33 =3.
 $$
If $G$ is holomorphic at a neighbourhood of the triple points, and the branches intersect
transversely (and hence also positively), then the proper transform $\tilde G$ of $G$ in the 
blow-up $X=\CP^2 \# 11 \overline{\CP}{}^2$ of $\CP^2$ at 
$S$, has homology class $[\tilde G]=10h - 3(e_1+\ldots + e_{11})=d$. 
Moreover, if there are no more singularities, then $\tilde G$ is a smooth symplectic surface in $X$.

\item The intersections $C_i \cap G$ contain the $10$ points $\D- \{P_i\}$.
Note that the algebraic intersection is $C_i\cdot G=30$.
If the intersections with each of the three branches at each intersection point
are transverse and positive (e.g.\ if $C_i, G$ are holomorphic around the intersection points),
and if there are no more intersections, then these account for all intersections.
In the blow-up $X$, the proper transfroms $\tilde C_i,\tilde G$ are disjoint.

\end{itemize}

Our aim now is to construct these surfaces in $\CP^2$. For this, we will make the 
construction in a local model, and then we will transplant it to $\CP^2$.

%%%%%%%%%%%%%%%%%%%%%%%%%%%%%%%%%
\subsection{Construction of a local model} 
%%%%%%%%%%%%%%%%%%%%%%%%%%%%%%%%%

Now we are going to construct the required $11$ surfaces of genus $1$ and the singular surface 
of genus $3$, in a local model.  The local model is as follows: take a genus $1$ complex curve
$C$ and a rational complex curve $L\cong \CP^1$. Take three points $Q_1,Q_2,Q_3\in C$ and
another three $Q_1',Q_2',Q_3'\in L$. Take a line bundle $E\to C$ of degree $9$ and
a line bundle $E'\to L$ of degree $1$, and perform the plumbing as given in Section \ref{sec:plumbing}.
This produces a symplectic manifold $P_c(C\cup L)$, which contains $C\cup L$.

\begin{proposition}
 Let $C'\subset \CP^2$ and $L'\subset \CP^2$ be a smooth cubic and a line in the complex plane, intersecting
 transversely. Then $P_c(C\cup L)$ can be symplectically embedded in a 
 neighbourhood of $C'\cup L'$, where $C$ is sent to $C'$ and $L$ is sent to a $C^0$-small perturbation of $L'$,
 preserving the intersection points.
 \end{proposition}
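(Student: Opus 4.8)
The plan is to prove this by applying the uniqueness of symplectic plumbings (Corollary~\ref{cor:PcS2}) to identify the abstract plumbing $P_c(C\cup L)$ with a genuine symplectic neighbourhood of $C'\cup L'$ inside $\CP^2$. The first step is to arrange $C'$ and $L'$ to intersect symplectically orthogonally: since $C'$ is a smooth cubic and $L'$ a line meeting it transversely and positively (they are complex curves, so the intersections are automatically positive and transverse for generic choice), Lemma~\ref{lem:symplectic-orthogonal} lets me perturb $L'$ by a $C^0$-small amount, fixing the three intersection points $C'\cap L'$, so that the perturbed $L'$ meets $C'$ $\o$-orthogonally. This is where the claim ``$L$ is sent to a $C^0$-small perturbation of $L'$'' comes from: I perturb $L'$ rather than $C'$, leaving the cubic untouched.

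Next I would invoke Corollary~\ref{cor:PcS} applied to the pair $S_1=C'$, $S_2=L'$ (after the perturbation) inside $X=\CP^2$. This produces a neighbourhood $U$ of $C'\cup L'$ in $\CP^2$ that is symplectomorphic to \emph{some} symplectic plumbing $P_c(C'\cup L')$, built from complex line bundles $E_j\to S_j$ with $c_1(E_j)=[S_j]^2$. Here the relevant self-intersection numbers inside $\CP^2$ are $[C']^2=9$ (the cubic has $(3h)^2=9$) and $[L']^2=1$ (the line has $h^2=1$), and the number of intersection points is $\#(C'\cap L')=C'\cdot L'=3$. These match exactly the combinatorial data used to build the abstract model $P_c(C\cup L)$, where $E\to C$ has degree $9$, $E'\to L$ has degree $1$, and there are three marked points $Q_1,Q_2,Q_3$ and $Q_1',Q_2',Q_3'$.

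The final step is to compare the abstract plumbing $P_c(C\cup L)$ with the plumbing $P_c(C'\cup L')$ coming from $\CP^2$, using Corollary~\ref{cor:PcS2}. For this I need the three numerical hypotheses of that corollary: equal numbers of intersection points ($m=m'=3$), equal self-intersection data ($d_1=d_1'=9$, $d_2=d_2'=1$), and equal symplectic areas $\la[\o],[C]\ra=\la[\o'],[C']\ra$ and $\la[\o],[L]\ra=\la[\o'],[L']\ra$. The self-intersections and the count of marked points are fixed by the construction, so the only real freedom I must exercise is in the \emph{choice of symplectic areas} on the abstract factors $C$ and $L$: I simply normalize the symplectic forms on the abstract model so that the area of $C$ equals the area of $C'$ (the restriction of the Fubini–Study form to the cubic) and the area of $L$ equals that of $L'$. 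Since Lemma~\ref{lem:2.1} leaves the areas of the zero sections as free parameters (they are inherited from $(S,\o_S)$), this normalization is harmless. With all the hypotheses of Corollary~\ref{cor:PcS2} satisfied, I obtain a symplectomorphism between neighbourhoods of $C\cup L$ and of $C'\cup L'$, sending $C\to C'$ and $L\to L'$, which after composing with the neighbourhood symplectomorphism of Corollary~\ref{cor:PcS} gives the desired embedding $P_c(C\cup L)\hookrightarrow \CP^2$.

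The main obstacle, such as it is, is bookkeeping rather than conceptual: one must check that the genus, degree, and intersection data of the abstract curves $C$ (genus $1$) and $L$ ($\cong\CP^1$) genuinely agree with those of a cubic and a line in $\CP^2$, and that the area-matching can be achieved simultaneously with the self-intersection matching without conflict. Because the self-intersection of a symplectic surface is a topological quantity independent of its area, these two requirements decouple, so there is no genuine tension; the proof reduces to verifying that $9$, $1$, and $3$ are the correct values and then citing Corollaries~\ref{cor:PcS} and~\ref{cor:PcS2}.
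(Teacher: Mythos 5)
Your proposal is correct and takes essentially the same route as the paper: perturb $L'$ via Lemma~\ref{lem:symplectic-orthogonal} so that it meets $C'$ symplectically orthogonally (leaving the cubic and the three intersection points fixed), then apply Corollary~\ref{cor:PcS} to identify a neighbourhood of the resulting configuration with a symplectic plumbing. Your additional explicit appeal to Corollary~\ref{cor:PcS2}, checking $m=3$, $d_1=9$, $d_2=1$ and normalizing the areas of the abstract $C$ and $L$, merely spells out the identification of that plumbing with the model $P_c(C\cup L)$, which the paper leaves implicit, so it is a sound elaboration rather than a different argument.
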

 
 \begin{proof}
 We start by modifying $L'$ to $L''$ using Lemma \ref{lem:symplectic-orthogonal}, 
 so that $C'$ and $L''$ intersect symplectically
 orthogonally. By Corollary \ref{cor:PcS}, a small neighbourhood of $C'\cup L''$ is symplectomorphic to
 a small neighbourhood of the plumbing of $C\cup L$, that is some $P_c(C\cup L)$, for $c>0$ small, and
 the symplectomorphism sends $C$ to $C'$ and $L$ to $L''$.
 \end{proof}

Therefore, to prove Theorem \ref{thm:main1}, it is enough to prove the following:

\begin{theorem}
There are $11$ points  $P_1,\ldots, P_{11}$ in $P_c(C\cup L)$, and symplectic surfaces 
 $C_1,C_2,\ldots, C_{11},G \subset P_c(C\cup L)$ such that: 
 \begin{itemize}
 \item $C_i$ is a section of a complex line bundle $E\to C$ of degree $3$, 
 and $P_j \in C_i$ for $j\neq i$, $P_i \notin C_i$. In particular, they have genus $1$.
 \item The surfaces $C_i, C_j$, $i \neq j$ intersect exactly at 
 $\{P_1, \ldots , P_{11} \}-\{P_i,P_j\}$, positively and transversely.
 \item $G$ is a genus $3$ singular symplectic surface whose only singularities
 are $11$ triple points at $P_i$ (with different branches intersecting positively).
 Moreover $G$ intersects each $C_i$ only at the points $P_j$, $j\neq i$, 
 and all the intersections of $C_i$ with the branches of $G$ are positive and transverse.
 \end{itemize}
\end{theorem}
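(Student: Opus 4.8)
The plan is to construct all the surfaces piece by piece over the two components $C$ and $L$ of the plumbing and to fuse the pieces across the three plumbing regions by the coordinate-reversal map $R(z_1,z_2)=(z_2,z_1)$. The reason this can be done symplectically, while the analogous algebraic configuration is obstructed, is exactly the Cayley--Bacharach relation: two cubics meet in nine points, and any cubic through eight of them is forced through the ninth, so honest cubics $C_1,\dots,C_{11}$ through ten of eleven points cannot exist. The swap $R$ decouples the incidences seen over $C$ from those seen over $L$, which provides precisely the extra degree of freedom needed to move the forced ninth point to a new location (the passage from $P_9$ to $P_{10},P_{11}$ in the heuristic).

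First I would fix the eleven points and realize each $C_i$ over the $C$-side as a symplectic section $s_i$ of a line bundle over $C$ (hence automatically of genus $1$), prescribed to pass through the ten points $P_j$ with $j\neq i$ and to avoid $P_i$, and then complete it over the $L$-side through the swap so that the glued surface is a smooth closed genus-$1$ symplectic surface. Near each point of $\D$ I would take the local models to be honestly holomorphic, using the product structure furnished by Lemma \ref{lem:2.1} at the plumbing points; this forces every intersection to be positive and transverse. For a fixed pair $i\neq j$ the incidence conditions already pin $C_i\cap C_j$ to the nine prescribed points $\D-\{P_i,P_j\}$, and since the homological intersection number $C_i\cdot C_j=9$ is saturated by these nine positive transverse points, no stray intersection can occur. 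Corollary \ref{cor:PcS2} guarantees that the outcome is independent of the auxiliary choices, so it is enough to exhibit one set of compatible local data.

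For $G$ I would build a three-valued section over $C$ whose three sheets are forced to coincide over each base point corresponding to a $P_j$, producing an ordinary triple point there once the three branches are arranged holomorphically and pairwise transversally. The essential use of $L$ here is to thread and cross-connect the three sheets through the swap so that they close up into a single \emph{irreducible} symplectic surface of geometric genus $3$, the value predicted by the adjunction/Pl\"ucker count recorded above. The intersections with each $C_i$ are then automatically accounted for: $C_i$ meets the three branches of $G$ at each of the ten points $P_j$, $j\neq i$, giving exactly $3\cdot 10=30$ positive transverse points, which saturates $C_i\cdot G=30$ and leaves no further intersection.

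The main obstacle is the global simultaneity of all these local prescriptions. Each surface must be symplectic everywhere, not only near $\D$; the holomorphic germs imposed at the many incidence and triple points must be interpolated by symplectic sections without creating any spurious zero of a difference $s_i-s_j$ or any extra branch of $G$; and the strands routed through the swap must close up to surfaces of exactly the right genus. Because positivity ties each homological intersection number to an exact geometric count, there is no slack whatsoever: a single unaccounted intersection would destroy either the disjointness of the proper transforms in the blow-up or the adjunction genus of $G$. Carrying out this interpolation for all eleven sections together with the trisection $G$, while keeping everything holomorphic at the shared points of $\D$ and compatible across the three swaps, is the crux of the argument.
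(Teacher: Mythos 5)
Your outline reproduces the paper's architecture in broad strokes (graphs of sections of a line bundle over $C$ for the genus-$1$ surfaces; three sheets over $C$ cross-connected through $L$ to close up the genus-$3$ surface), but two of your key claims do not hold up, and together they leave the actual content of the proof missing. The central gap is your exclusion of stray intersections by ``saturation'': you argue that since the nine prescribed positive transverse points already account for $C_i\cdot C_j=9$ (and the $30$ branch intersections account for $C_i\cdot G=30$), no further intersection can occur. This inference is only valid when \emph{all} intersections are known to be positive, which is automatic for $J$-holomorphic curves with respect to a common almost complex structure but not for the surfaces at hand: they are holomorphic only near $\D$, and elsewhere merely smooth symplectic, so extra intersections could perfectly well appear in canceling pairs of opposite signs without changing the homological count. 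This is precisely why the paper does not argue homologically at all: it writes explicit sections $\s_j(z)=\prod_{i\neq j}(1-z/(\l z_i))$ and $\t_k$, extends them by cutoff functions, and rules out unwanted intersections by quantitative estimates (choosing $\l$ small so that the discs $B(z_j,M_0|z_j|\l)$, $B(w_k,M|w_k|\l)$ are pairwise disjoint, checking the regions $|z|\le 1/2$, $1/2\le|z|\le 3/4$, $|z|\ge 3/4$ separately), and then scales by $\e$ so the graphs are $C^1$-close to the zero section, hence symplectic. That verification is exactly the ``interpolation'' you acknowledge deferring at the end; it is the substance of the theorem, not a routine finishing step.

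Secondly, you misattribute the circumvention of Cayley--Bacharach to the coordinate swap $R$, and accordingly propose to thread each $C_i$ through the plumbing onto the $L$-side. In the paper the eleven genus-$1$ surfaces never touch the $L$-side: each is the graph of a \emph{global} section $\hat\s_j$ of $E\to C$, closed and of genus $1$ simply because $C$ is, and the algebraic obstruction is evaded because these sections are holomorphic only near the incidence points and smoothly (non-holomorphically) interpolated elsewhere. The plumbing with $L$ enters only in the construction of $G$: the sections $\t_k$ have simple poles at the auxiliary points $Q_k$, and $L$ minus three discs caps off the three punctured tori through the swap, yielding one genus-$3$ surface. Routing the $C_i$ through the swap is not only unnecessary but would require a separate argument that the glued surfaces still close up with genus $1$, and would place pieces of the $C_i$ in the region where $G$ contains $L$, creating new intersection problems. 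Finally, for the gluing of $G$ to be symplectic one must make the coordinate $z'=1/\t_k$ near $Q_k$ Darboux, which requires perturbing the symplectic form as in Lemma \ref{lem:juan}; your proposal does not address this point.
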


To be more concrete, we do as follows. We fix a complex structure on $C$, and a degree $9$ 
% holomorphic structure on the 
complex line bundle $E\to C$. This is going to be as follows: take a 
complex disc $D \subset C$, which we assume as the radius $1$ disc $D=D(0,1)\subset \CC$.
Let $V=C-\bar D(0,1/2)$, and consider the change of trivialization given by the function $g(z)=z^9$
con $D(0,1)-\bar D(0,1/2)$. This means that $E$ is formed by gluing $E|_D=D\x \CC$ with $E|_V=V\x\CC$
via $(z,y) \sim (z, z^9 y)$. %We give $E$ the induced holomorphic structure, 
%and we also 
We endow $E$ with an auxiliary  hermitian metric which is of the form
$h(z)=1$ on the trivialization $E|_D$. 
We will choose the points $Q_1,Q_2,Q_3\in D \subset C$. We also take a complex line bundle $E' \to L$
of degree $1$, 
for which we fix a %holomorphic and 
hermitian structure. % on $E'$ and a symplectic structure on the neighbourhood
Fixing three points $Q_1',Q_2',Q_3'\in L$, we perform the plumbing $P_c(C\cup L)$.

%%%%%%%%%%%%%%%%%%%%%%%%%%%%%%%%%
\subsection{Construction of the genus $1$ surfaces} \label{subsec:genus1}
%%%%%%%%%%%%%%%%%%%%%%%%%%%%%%%%%

The genus $1$ symplectic surfaces will be constructed as sections of the line bundle $E\to C$. 
Consider the previous cover $C=V \cup D$ and trivializations
$E|_V \cong V \x \CC$ and $E|_D \cong D \x \CC$.
Fix distinct numbers $z_1,\ldots, z_{10},z_{11} \in D$ with $z_{11}=0$ the origin.
Take $\l>0$ a small positive real number to be fixed later. Take the points
 \begin{equation}\label{eqn:pointsP}
 P_j=(\l z_j,0) , j=1,\ldots, 10, \text{ and } P_{11}=(0,1)
 \end{equation}
in $E$, in the given trivialization $E|_D=D\x \CC$. We define $11$ holomorphic local sections in the chart $D \subset C$ as
 $$
 \s_j(z)=\prod_{i\neq j}^{10} \left(1- \frac{z}{\l z_i} \right), \quad j=1,\ldots, 10,
 $$
and $\s_{11}(z)=0$. 

Clearly $\s_j(z)=\s_{11}(z)=0$ at the $9$ points $\l z_1, \ldots, \widehat{\l z_j}, \ldots, \l z_{10}$.
Also, for $1 \leq j < k \leq 10$, we have that $\s_j(z)=\s_k(z)$ at the $9$ points given by 
 \begin{equation}\label{eqn:9}
 \l z_1, \ldots, \widehat{\l z_j}, \ldots, \widehat{\l z_k}, \ldots, \l z_{10}, z_{11}=0 .
 \end{equation}
All the intersections of the graphs are transverse and positive since the points $\l z_i$ are simple roots and $\s_j$ are holomorphic sections.
By construction, the graph $\G(\s_j)$ of the local section $\s_j$ 
in the trivialization $E|_{D} \cong D \x \CC$ 
contains the set of points $\{P_1, \ldots, P_{11}\}-\{P_j\}$, as desired.

Now we move to the trivialization $E|_V$. Let us see that we can extend the sections $\s_j$
to all of $V$ without introducing any new intersection points between their graphs. 
For $z \in D \cap V$, the sections $\s_j$ become, for $|z| \geq 1/2$, in the trivialization of $E|_V \cong V\x \CC$,
 $$
 \tilde \s_j = z^{-9}\prod_{i\neq j}^{10} \left(1- \frac{z}{\l z_i} \right)
 = \l^{-9} A z_j \prod_{i\neq j}^{10} \left( 1-\frac{\l z_i}{z} \right), \quad A=- (z_1\ldots z_{10})^{-1},
 $$
and $\tilde \s_{11}=0$. Then $\tilde \s_j$ has the form 
 $$
 \tilde \s_j= A \lambda^{-9} z_j (1+ \l f_j(z,\l)),
 $$
where 
 $$
  f_j(z,\l)= \frac{1}{\lambda}\left( \prod_{i\neq j}^{10} \left( 1-\frac{\l z_i}{z} \right) -1\right)
  $$ 
is a holomorphic function of $z$ 
depending on the parameter $\l$, such that
$$
|f_j(z, \l)| \leq M_0 \text{ , for } \l \le \tfrac{1}{4}, |z| \ge \tfrac{1}{2},
$$
being $M_0$ a constant depending only on $z_1, \dots , z_{11}$.

Let $\rho$ be the smooth non-increasing function  with $\rho(r)=0$ for $r\geq 3/4$ 
and $\rho(r)=1$ for $r \leq 2/3$. Here $r=|z|$ is the radius in the disc $D$. 
Now we modify the local sections $\tilde \s_j$ to sections $\hat \s_j$ that
can be extended to global sections in $E \to C$.
We define for $z \in U$, $|z| \ge 1/2$,
\begin{align} \label{eqn:a1}
 \hat \s_j(z) & = \rho(|z|) \tilde\s_j(z)+ (1-\rho(|z|)) \lambda^{-9}A z_j = \lambda^{-9}Az_j \left(1+ \lambda \rho(|z|) f_j(z,\lambda)\right). 
\end{align}
We also put $\hat \s_{11}=0$.

We have that $\hat \s_j= \tilde \s_j$ in $\{1/2 \le |z| \le 2/3\}$, 
so $\hat \s_j$ extends to the trivialization $E|_{D}$ as $\s_j$ in $\{|z| \le 1/2\} \subset D$. 
Moreover, $\hat \s_j(z)= Az_j$ is constant for $|z| \ge 3/4$, 
so $\hat \s_j$ extends to all $V$, hence they give global sections in the line bundle $E \to C$.
We call $\hat \s_j$ these global sections, and $\G(\hat \s_j)$ their graphs.

Now let us check that no undesired intersection points are introduced between any pair of surfaces $C_j$, $1 \le j \le 11$.
On $|z| \le 1/2$, $\hat \s_j=\tilde\s_j$, so $\tilde\s_j$, $\tilde\s_k$, $j\neq k$, have $9$ intersection points 
given by (\ref{eqn:9}), which are the set $\{P_1, \ldots , P_{11}\} - \{P_j,P_k\}$. As $\tilde\s_j$ and $\tilde\s_k$
are holomorphic there, and the roots are simple, the intersections are positive and transverse.

For $|z|\geq 3/4$, $\hat\s_j =\lambda^{-9}Az_j$, $j=1,\ldots, 10$ and $\hat\s_{11}=0$. Therefore
the sections do not intersect since the $\{z_j\}$ are distinct points.
Now assume that
$1/2 \le |z| \le 3/4$. If $\hat \s_j(z) = \hat \s_k(z)$ with $k \ne j\leq 10$ then 
$$
 z_j+ z_j\lambda\rho(|z|) f_j(z,\l) = z_k + z_k\lambda\rho(|z|) f_k(z,\l).
$$
Taking $\l>0$ small enough, the discs $B(z_j,M_0|z_j| \l)$ and $B(z_k,M_0|z_k| \l)$ are all pairwise disjoint, 
so the above equality does not happen.
Analogously, if $\hat \s_j(z)=\hat\s_{11}(z)=0$ for $1/2 \le |z| \le 3/4$
we have a contradiction as long as $\l$ is small enough so that 
the discs $B(z_j,\l |z_j| M_0)$ do not contain the origin.

Finally  considering $\hat \s_j^\e= \e \hat \s_j$, being $\e>0$ small enough, the intersections 
of the graphs remains the same except that $P_{11}$ is changed to the point $(0,\e)$.
This ensures that the graphs are all contained in the given neighbourhood $B_c(C)$, for any $c>0$ given
beforehand. Moreover the graphs become $C^1$-close to the zero section 
$C \subset E$, %(which is holomorphic), 
in particular the graphs are symplectic surfaces of $B_c(E)$.

%%%%%%%%%%%%%%%%%%%%%%%%%%%%%%%%%%%%
\subsection{Construction of the genus $3$ surface}
%%%%%%%%%%%%%%%%%%%%%%%%%%%%%%%%%%%%

The genus $3$ surface will be constructed inside the neighbourhood $P_c(C\cup L)$ of
$C\cup L$, where $C$ is the genus $1$ surface and $L$ the genus $0$ surface,
both intersecting at three points. We will take $3$ sections of
the bundle $E\to C$, all of them passing through the eleven points
$P_1,\ldots, P_{11}$. In this way we get the $11$ triple points.
Then we add the line $L$, and glue the three sections with $L$
around the intersection points of $L$ and $C$. Let us carry out the details.

As before, take the previous cover $C=V \cup D$, $D=D(0,1)$, $V=C-\bar D(0,1/2)$, 
and trivializations $E|_D \cong D \x \CC$ and $L|_V \cong V \x \CC$,
with change of trivialization $g(z)=z^9$. We have fixed 
$z_1,\ldots, z_{10},z_{11}=0 \in D$ and the points
$$
 P_j=(\l z_j,0) , j=1,\ldots, 10, \text{ and } P_{11}=(0,1)
  $$
in $E|_D=D\x \CC$, where $0<\l \leq 1/4$ is some small number as arranged in Section \ref{subsec:genus1}.

We choose another three distinct values $w_1,w_2,w_3\in D$, different to $z_1,\ldots, z_{11}$.
We take the points 
\begin{equation} \label{eqn:points_Q}
 Q_1=(\l w_1,0), Q_2=(\l w_2,0), Q_3=(\l w_3,0),
\end{equation}
in the trivialization $E|_D=D\x \CC$. %Actually $Q_k\in C\subset E$.
Consider (meromorphic) sections $\tau_k$, defined in $D - \{Q_k\}$ by the formula
$$
\tau_k(z)= \frac{\prod_{i=1}^{10} \left(1-\frac{z}{\l z_i}\right)} {1-\frac{z}{\l w_k}}\, ,
$$
for $k=1,2,3$. 
The graph of $\tau_k$ passes through all $11$ points (\ref{eqn:pointsP}). 

Let us see that we can extend the sections $\tau_k$ to the trivialization $E|_V$,
giving thus sections over $C - \{Q_k\}$. 
For $z \in D \cap V$, i.e.\ $|z| \ge 1/2$, we express $\tau_k$ in the trivialization $L|_V$, which is given by
 $\tilde \tau_k(z)=z^{-9} \tau_k(z)$. 
\begin{align*}
\tilde \tau_k(z) =z^{-9}  \frac{\prod_{i=1}^{10} \left(1-\frac{z}{\l z_i}\right)} {1-\frac{z}{\l w_k}} 
 = \lambda^{-9}A w_k \frac{\prod_{i=1}^{10} \left(1-\frac{\l z_i}{z}\right)} {1-\frac{\l w_k}{z}} = \lambda^{-9}Aw_k(1+ \lambda g_k(z,\l)),
\end{align*}
 where $A=-(z_1\cdots z_{10})^{-1}$ as before, and
 $$
  g_k(z,\l)= \frac{1}{\l} \left(\frac{\prod_{i=1}^{10} \left(1-\frac{\l z_i}{z}\right)} {1-\frac{\l w_k}{z}}-1\right)
  $$
are bounded functions for $|z| \ge 1/2$ and $0 < \l \le 1/4$,  say $|g_k(\l,z)| \le M$, for $M>0$ a constant.

Let $\rho:[0, \infty) \to \RR$ be a non-increasing smooth function 
such that $\rho(r)=1$ for $r \le 1/2$ and $\rho(r)=0$ 
for $r \ge 3/4$.
Now we modify $\tilde \tau_k(z)$ for $z \in D \cap V$, i.e.\ $|z| \ge 1/2$. 
Consider
$$
\hat \tau_k(z)= \lambda^{-9}Aw_k (1 + \rho(|z|) \l g_k(\l,z)).
$$
Clearly, $\hat \tau_k(z)=\tilde \tau_k(z)$ for $|z| \le 1/2$, so $\hat \tau_k$ extends to
the trivialization $E|_D$. Also, for $|z| \ge 3/4$ 
we have $\hat \tau_k(z)=\lambda^{-9}Aw_k $ is constant
so $\hat \tau_k$ extends to all the trivialization $L|_V$.
This yields a global section defined in $C - \{Q_k\}$, 
given by $\tau_k$ in $\{|z| \le 1/2\} \subset D$, and by $\hat \tau_k$ in $V$. 
We call from now on $\hat \tau_k$ this global section.
Let us denote 
 $$
 \Theta_k =\G(\hat\tau_k)= \{ (z,\hat \tau_k(z))\, | \, z \in C - \{Q_k\} \}
 $$ 
the graph of $\hat \tau_k$.

Let us see that the graphs $\Theta_1, \Theta_2, \Theta_3$
only intersect at the points $P_1, \dots, P_{10}, P_{11}$, i.e.\ that the sections only coincide
for the values $\l z_1, \ldots, \l z_{10}, z_{11}=0$. Let $j\neq k$.
On $|z|\leq 1/2$, $z \ne \l w_j, \l w_k$, if
$\hat \tau_j(z)=\hat \tau_k(z)$, then 
$$
\frac{\prod_{i=1}^{10} \left(1-\frac{z}{\l z_i}\right)} {1-\frac{z}{\l w_j}}=
\frac{\prod_{i=1}^{10} \left(1-\frac{z}{\l z_i}\right)} {1-\frac{z}{\l w_k}}.
$$
Hence either $z= \l z_i$ for some $1 \le i \le 10$ or $\frac{z}{\l w_j}=\frac{z}{\l w_k}$. 
The latter implies $z=0=z_{11}$.

For $|z| \ge 3/4$, if $\hat \tau_j(z)=\hat \tau_k(z)$ 
then $\lambda^{-9}Aw_j=\lambda^{-9}Aw_k$, which is false since $w_j\neq w_k$.
Finally, for $1/2 \le |z| \le 3/4$, if $\hat \tau_j(z)=\hat \tau_k(z)$ then 
$$
 w_j + w_j\rho(|z|) \l  g_j(\l,z))  = w_k+ w_k\rho(|z|) \l  g_k(\l,z)).
$$
Choosing $\l$ small enough, we have that the discs $D(w_j , \l |w_j|M)$ and $D(w_k , \l |w_k| M)$ do not intersect. 
So the above equality does not happen.

Finally, let us check the intersections of $\Theta_k$ with $\G(\hat\s_j)$.
Take $|z| \le 1/2$. Suppose that $\tau_k(z)=\s_j(z)$. This implies that 
$$
\s_j(z) \frac{1-\frac{z}{\l z_j}}{1-\frac{z}{\l w_k}} = \s_j(z),
$$
hence either $\s_j(z)=0$ or $\frac{z}{\l z_j} = \frac{z}{\l w_k}$.
In the first case we have that $z=\l z_i$ for some $i \ne j$.
In the second case we have that either $z=0=z_{11}$, 
or $\l z_j=\l w_k$ which is not possible because the points $w_k$ are different from the points $z_j$.

Suppose now that $1/2 \le |z| \le 3/4$ and $\s_j(z)=\tau_k(z)$. Then
$$
 z_j + z_j\rho(|z|) \l f_j(z,\l) =  w_k + w_k\rho(|z|) \l g_k(\l,z).
$$
if we take $\l$ small, the discs $D(z_{j}, M_0 |z_j| \l)$ and $D(w_k, M |w_k| \l)$
are disjoint, so the above equality is impossible. 
Finally, if $|z| \ge 3/4$ and $\tau_k(z)=\s_j(z)$ then $\lambda^{-9}Az_j=\lambda^{-9}Aw_k$, and this is false.

%%%%%%%%%%%%%%%%%%%%%%%%%%%%%%
\subsection{Gluing the transversal in the plumbing}
%%%%%%%%%%%%%%%%%%%%%%%%%%%%%%

The plumbing $P_c(C\cup L)$ is defined only for $c>0$ small enough. Let us arrange that our sections lie
inside it suitably. 
For this let $N>0$ be an upper bound of all $|\hat\s_j|$, $j=1,\ldots, 11$ such that $(|\hat\tau_k|)^{-1} ([N,\infty)) \subset
B(Q_k)$, where $B(Q_k) \subset D_{1/2}$ are small balls around $Q_k$, $k=1,2,3$. 
Recall that $\hat\tau_k=\tau_k$ is holomorphic on $B(Q_k)-\{Q_k\}$. 
As $\tau_k$ has a simple pole at $Q_k$, we have that 
 $$
 z'=z_k'=h_k(z)= \frac{1}{\tau_k(z)}
 $$
is a biholomorphism from a neighbourhood of $Q_k$ (that we keep calling $B(Q_k)$) to a ball $B_c(0)$. We take the
coordinate $z'_k$ on $B(Q_k)$. We need to modify the symplectic form so that $z'_k$ is also a Darboux coordinate.

\begin{lemma} \label{lem:juan}
 Consider the disc $D=D(0,1)$. We can perturb the standard symplectic form $\o_D$ to a nearby symplectic form $\o'_D$
 such that, maybe after reducing the balls $B(Q_k)$, the coordinate $z_k'$ are Darboux. The perturbation is 
 made only on a (slightly larger) ball around $Q_k$, and keeping the total area.
\end{lemma}

\begin{proof}
 We write $z'=z_k'=x'+iy'$. The standard symplectic form $\o_D$ on the coordinates $z$ is clearly K\"ahler, therefore
 it is also K\"ahler for the holomorphic coordinate $z'$. In particular, it has a K\"ahler potential $\phi(x',y')$,
 with $\o_D=\bd \bar\bd \phi(x',y')$. We can assume that $\phi$ has no linear part, 
 so $\phi(x',y')=\phi_2(x',y')+\phi_3(x',y')$, where $\phi_2(x',y')$
 is quadratic and $|\phi_3|=O(|(x',y')|^3)$. Then take some bump function $\rho$ that vanishes on a neighbourhood $B_\eta(Q_k)$
 of $Q_k$ (the size measured with respect to the radial coordinate $r'=|z'|$), 
 and $\rho\equiv 1$ on a slightly larger neighbourhood $B_{2\eta}(Q_k)$, $|d\rho|=O(\eta^{-1})$
 and $|\nabla d\rho|=O(\eta^{-2})$. Set 
  $\o_D'= \bd\bar\bd (\phi_2 + \rho\phi_3)$. Then $|\o'_D-\o_D|=O(\eta)$, and $\o'_D$ is standard on $(B_\eta(Q_k),z')$.
  As the difference $\o'_D-\o_D=\bd \bar\bd ((\rho-1)\phi_3)=d(\bar\bd (\rho-1)\phi_3)$ is exact and compactly supported,
  the total area remains the same.
\end{proof}

\begin{remark} 
Lemma  \ref{lem:juan} also holds in higher dimension. More concretely,
let $(Z,\o,J)$ be a K\"ahler manifold of real dimension $2n$, $p \in Z$
and $\f: U \to B \subset \CC^n$ holomorphic coordinates around $p$.
Then there exists a symplectic form $\o'$ on $Z$ so that $(Z,\o',J)$
is K\"ahler, $\o'=\o$ in $Z - U$, and $\o'$ is a linear symplectic form near $p$ on the coordinates $\f$,
in some smaller neighborhood $V\subset U$.
Moreover, the cohomology classes $[\o]=[\o']$. %total volume of $Z$ is the same with respect to $\o$ and $\o'$.
\end{remark}

\medskip

Over $E|_{B(Q_k)}\cong B_c(0)\x \CC$, the section $\tau_k$ is given by $v=1/z'$ (making $c>0$ smaller if needed), 
writing $z'=z_k'$ for brevity.
For $Q_1',Q_2',Q_3'\in L$, take  holomorphic balls $B(Q_j')\cong B_c(0)$, and arrange the
symplectic structure on $L$ to be standard over them. Finally, take symplectic structures on the total spaces
of the complex line bundles $\pi:E\to C$ and $\pi':E'\to L$ so that they are product symplectic structures
on $B_c(C)\cap E|_{B(Q_k)} \cong B(Q_k) \x B_c(0)$ and 
$B_c(L)\cap E'|_{B(Q'_k)} \cong B(Q'_k) \x B_c(0)$, respectively. The plumbing $P_c(C\cup L)$ is
done by gluing $B_c(C)$ and $B_c(L)$ along $R:B(Q_k) \x B_c(0) \to B(Q'_k) \x B_c(0)$, the map reversal of 
coordinates. Note that the uniqueness result of Corollary \ref{cor:PcS2} allows to do the plumbing with these choices.
We only have to take care of keeping the total areas $\la [C], [\o_{E}]\ra$ and $\la [L],[\o_{E'}]\ra$ fixed.

Now take $\e>0$ small enough so that:
 \begin{itemize}
 \item The graphs of the sections $\s_j^\e=\e \s_j$ are inside $B_c(C)$. For this $\e N<c$ is enough.
 \item The graphs of the sections $\s_j^\e$ are $C^1$-close to the zero section. This implies that these graphs are symplectic (a submanifold
 $C^1$-close to a symplectic one is symplectic).
 \item All sections $\tau_k^\e=\e\tau_k$ satisfy $|\tau_k^\e|<c$ on $C-B(Q_k)$, so the graph
 $\Theta_k^\e$ of $\tau_k^\e$ satisfies that $\Theta_k^\e \cap \pi^{-1}(C-B(Q_k)) \subset B_c(C)$. For this it is enough that $\e N<c$ again.
 \item The graphs of the sections $\tau_k^\e$ are $C^1$-close to the zero section on $C-B(Q_k)$, so they are symplectic.
% Note that the section $\tau_k^\e$ is holomorphic on $B(Q_k)$, in particular, automatically symplectic there.
 \end{itemize}

Now we look at the graph $\Theta_k^\e \cap (E|_{B(Q_k)})$. We have
 \begin{align*}
\Theta_k^\e \cap (E|_{B(Q_k)}) &\cong \left\{(z',v) \in B_c(0)\x \CC\, \Big|\,  v= \frac{\e}{z'}\right\} \\
  & =\left\{(z',v) \in B_c(0)\x \CC\, \Big| \, |v| \geq \e c^{-1}, z'= \frac{\e}{v}\right\} .
\end{align*}
Make $\e>0$ smaller if necessary, so that $\e c^{-1} \leq c/2$. Take $\rho(r)$ a smooth non-increasing
function so that $\rho(r)=1$ for $r\leq 1/2$ and $\rho(r)=0$  for $r\geq 3/4$. Define
 $$
 \hat\Theta_k^\e = \left\{(z',v) \in B_c(0)\x \CC\, \Big| \, \e c^{-1} \leq |v| \leq c,  z'= \e \rho(|v|/c) \frac{1}{v}\right\} 
 $$
This can be smoothly glued to  $\bar\Theta_k^\e =\Theta_k^\e \cap \pi^{-1}(C-B(Q_k))$. On the part
of the plumbing corresponding to $E'\to L$, this has the form 
$z'= \e \rho(|v|/c) \frac{1}{v}$ on $B(Q'_k)\x B_c(0)$, where $v$ is the coordinate for $B(Q_k')$ and
$z'$ is the vertical coordinate. Note that this can be extended as $z'=0$ in the bundle $E'\to L$,
over $L-(B(Q_1')\cup B(Q_2')\cup B(Q_3'))$.
The resulting smooth manifold is
  \begin{equation}\label{eqn:D}
  G=\bigcup_{k=1,2,3} \left(\bar\Theta_k^\e \cup \hat\Theta_k^\e\right) \cup \big(
 L-(B(Q_1')\cup B(Q_2')\cup B(Q_3'))\big).
  \end{equation}

Clearly, as $|v|\geq \e c^{-1}$ for the points of $\hat\Theta_k^\e$, there are no new intersections
with the graphs $\G(\s_j^\e)$ or $\Theta^\e_l$, $l\neq k$, since they are bounded by $\e N$, and 
we can take $c<N^{-1}$ to start with.

The graphs $\hat\Theta_k^\e$ are symplectic, since the graphs of
$z'= \e \rho(|v|/c) \frac{1}{v}$ are symplectic over $|v|\geq c/2$, by taking $\e>0$ small enough
so that it is $C^1$-close to the zero section $z'=0$ of the bundle $E'\to L$. On $\e c^{-1}\leq |v|\leq c/2$, the graph
coincides with $z'= \e \frac{1}{v}$, which is holomorphic hence symplectic.

\begin{remark}
The homology class of the graph $\G(\hat\s_j)$ in $P_c(C\cup L)$ is equal to $[C]$, since they are sections
of $E\to C$. The manifold $G$ of (\ref{eqn:D}) can be retracted to $3[C]+[L]$ in $P_c(C\cup L)$, by making $\e\to 0$.

When we embed $P_c(C\cup L)\hookrightarrow \CP^2$, the class $[C]\mapsto 3h$, and $[L]\mapsto h$,
where $h$ is the class of the line in $\CP^2$. Hence $[G]\mapsto 10h$, so $G$ has degree $10$.

The genus of $G$ is $3$ since topologically it is the gluing (connected sum) of three punctured surfaces of genus $1$ 
(given by the graphs of the sections 
$\Theta_k$, $k=1,2,3$), with a sphere with $3$ holes given by $L-(B(Q_1')\cup B(Q_2')\cup B(Q_3'))$.
\end{remark}

%%%%%%%%%%%%%%%%%%%%%%%%%%%%%%%%%%%%%%%
\section{Fundamental group of the K-contact $5$-manifold} \label{sec:4}
%%%%%%%%%%%%%%%%%%%%%%%%%%%%%%%%%%%%%%%

Let $X$ be the symplectic manifold constructed as the symplectic blow-up
of $\CP^2$ at the eleven points $P_1,\ldots, P_{11}$. 
The underlying smooth manifold is $X=\CP^2 \# 11\overline{\CP^2}$, with $b_2=12$.
It has $11$ surfaces of genus $1$, named $\tilde C_1,\ldots, \tilde C_{11}$, 
and a genus $3$ surface $\tilde G$ all of them disjoint.
We set the isotropy of $\tilde C_i$ to be $\ZZ/(p^i)$, $i=1,\ldots, 11$, and that of $\tilde G$ to be $\ZZ/(p^{12})$, 
for a fixed prime $p$. This determines a symplectic orbifold $X'$ uniquely by \cite[Proposition 7]{MRT}.

We start by computing the \emph{orbifold fundamental group} $\pi_1^{\orb}(X')$ of $X'$.
The reader can find alternative definitions  in \cite[Chapter 13]{Thu} and in \cite[Definition 4.3.6]{BG}.
We only need a presentation of $\pi_1^{\orb}(X')$, which follows from \cite[Th\'eor\`eme A.1.4]{HD}. For this,
fix a base point $p_0\in X'$. Take loops from $p_0$ to a point near $\tilde C_i$, followed by a loop
$\delta_i$ around $\tilde C_i$, and going back to $p_0$, $i=1,\ldots, 11$. In the same vein,
we add another loop $\delta_{12}$ around $\tilde G$. Then 
 $$ 
 \pi_1^{\orb}(X')= \frac{\pi_1(X-(\tilde C_1\cup \ldots \cup \tilde C_{11}\cup \tilde G))}{\la 
 \delta_1^p,\ldots, \delta_{11}^{p^{11}}, \delta_{12}^{p^{12}}\ra}\, .
 $$
Let us see that $\pi_1^{\orb}(X')$ is trivial.
It suffices to see that $\pi_1(X-(\tilde C_1\cup \ldots \cup \tilde C_{11}\cup \tilde G))$
is trivial. We start with a lemma.
 
\begin{lemma} \label{lem:loops-cubic}
We can arrange a complex cubic curve and a complex line $C', L' \subset \CP^2$
intersecting transversally, such that a small neighborhood $B_\epsilon(C' \cup L')$  of $C' \cup L'$
satisfies the following: there are generators of $\pi_1(C')$ represented by loops $\a,\b$
away from $B_\epsilon(L')$, that can be homotoped (outside
$B_\epsilon(L')$) to loops $\hat\a,\hat\b$ in $\bd B_\epsilon(C')$. 
The loops $\hat\a,\hat\b$ are contractible in
%for a loop $\g$ in $C'$, we can homotop it to a loop $\hat \g$
%that does not touch $B_\epsilon(C \cup L)$, and $\hat\g$ is contractible in 
$\CP^2 - (B_\epsilon(C' \cup L'))$.
\end{lemma}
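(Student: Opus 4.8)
The plan is to reduce the statement to a linking-number computation by exploiting that $C'\cup L'$ is a \emph{nodal} curve. First I would fix the tubular neighbourhood $B_\epsilon(C')$ to be the normal disc bundle of the smooth cubic $C'$; since $[C']^2=9$, its boundary $\partial B_\epsilon(C')$ is the circle bundle of Euler number $9$ over the torus $C'$. Because $C'$ and $L'$ meet transversally at three points $R_1,R_2,R_3$, the open surface $C'-\{R_1,R_2,R_3\}$ still carries embedded loops $\a,\b$ generating $\pi_1(C')\cong\ZZ^2$, and these can be taken inside $C'-B_\epsilon(L')$. Over this open (hence homotopy-equivalent to a wedge of circles) surface the normal bundle is trivial, so pushing $\a,\b$ off along a chosen normal framing yields loops $\hat\a,\hat\b\subset\partial B_\epsilon(C')$, with the homotopy carried out entirely outside $B_\epsilon(L')$, as required by the statement.

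Next I would identify the target group. Since $\CP^2-B_\epsilon(C'\cup L')$ deformation retracts onto $\CP^2-(C'\cup L')$, and $C'\cup L'$ has only nodes (the three transverse intersections), the theorem of Deligne and Fulton shows that $\pi_1\big(\CP^2-(C'\cup L')\big)$ is \emph{abelian}, hence equal to $H_1$. A Mayer--Vietoris (or Alexander-duality) computation gives $H_1\big(\CP^2-(C'\cup L')\big)=\big(\ZZ\,\mu_C\oplus\ZZ\,\mu_L\big)/(3\mu_C+\mu_L)\cong\ZZ$, generated by the cubic meridian $\mu_C$, with $\mu_L=-3\mu_C$. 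Consequently a loop in the complement is null-homotopic if and only if its class in $\ZZ$ vanishes, that is, if and only if $\mathrm{lk}(\cdot,C')-3\,\mathrm{lk}(\cdot,L')=0$.

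It then remains to evaluate the two linking numbers for $\hat\a$ (and identically for $\hat\b$). Because $\hat\a$ lies near $C'$ and away from $L'$, it sits inside $\CP^2-L'\cong\CC^2$, which is simply connected; thus $\hat\a$ bounds a disc disjoint from $L'$ and $\mathrm{lk}(\hat\a,L')=0$. For the cubic, changing the normal framing of the pushoff by a fibre rotation of winding $k$ replaces $\hat\a$ by $\hat\a\cdot\mu_C^{\,k}$ and hence changes $\mathrm{lk}(\hat\a,C')$ by $k$; since the winding of the section along $\a$ and along $\b$ is governed by $H^1(C'-\{R_i\};\ZZ)$, these two windings can be prescribed independently. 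I would therefore choose the framing so that $\mathrm{lk}(\hat\a,C')=\mathrm{lk}(\hat\b,C')=0$. Then $[\hat\a]=[\hat\b]=0$ in $H_1=\pi_1(\CP^2-(C'\cup L'))$, and by abelianness these loops are contractible in $\CP^2-B_\epsilon(C'\cup L')$.

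The crux of the argument is the nodality of $C'\cup L'$: it is precisely this that collapses the potentially non-abelian $\pi_1$ of the complement down to $\ZZ$ and lets the whole question be settled by linking numbers. The one point demanding care is the framing bookkeeping in the last step --- one must check that adjusting the normal pushoff to kill $\mathrm{lk}(\hat\a,C')$ and $\mathrm{lk}(\hat\b,C')$ simultaneously is compatible with keeping the homotopies supported in $C'-B_\epsilon(L')$; this is where the freedom afforded by being able to \emph{arrange} the configuration is used. A more computational alternative, avoiding Deligne--Fulton, would run a Zariski--van Kampen analysis of the pencil of lines through a point of $L'$, whose generic fibre is $\CC$ minus the three points of $C'\cap\ell$, but this seems heavier than necessary.
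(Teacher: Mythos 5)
Your proof is correct, but it follows a genuinely different route from the paper's. The paper argues by explicit degeneration: it takes the family of cubics $C_r=\{y^2=x^3-r^2x\}$ collapsing to the cuspidal rational curve $C_0=\{y^2=x^3\}$, notes that the two explicit vanishing cycles $\a_r,\b_r$ generate $\pi_1(C_r)$, and uses the discs swept out by $\a_t,\b_t$, $t\in[0,r]$, as concrete null-homotopies in the complement of the cubic; the tubular neighbourhood is built from the nearby curves $C_s$, $|s-r|<\e$, so that $\hat\a=\a_{r-\e}$, $\hat\b=\b_{r-\e}$ lie on its boundary, and $L'$ is then chosen (a perturbation of the line at infinity) away from all the loops and homotopies. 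That argument is elementary and hands you explicit contracting discs, and it explains the phrase ``we can arrange'' in the statement: the configuration is special. You instead prove the statement for an \emph{arbitrary} smooth cubic and transverse line: push $\a,\b$ off along a framing of the trivialized normal bundle, compute $H_1(\CP^2-(C'\cup L'))\cong\ZZ$ with $\mu_L=-3\mu_C$, invoke the Deligne--Fulton theorem that nodal plane curve complements have abelian fundamental group, and then twist the framings along $\a$ and $\b$ independently to kill the two classes. This buys a stronger and cleaner statement, but at the price of a deep theorem (Zariski's conjecture) where the paper uses none; the freedom to retwist is genuinely needed in your argument, since pushoffs with different windings differ by meridians, which are nontrivial in the complement --- and this is compatible with the lemma, which only asserts the existence of some such $\hat\a,\hat\b$. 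Your bookkeeping is sound: changing the winding by $k$ changes the class by $k\mu_C$, the two windings are independent because $[\a],[\b]$ restrict to a basis of $H^1(\a\cup\b;\ZZ)\cong\ZZ^2$, and every pushoff homotopy is supported in the normal discs over $\a\cup\b\subset C'-B_\e(L')$, so the support condition is automatic. One small slip to fix: the deformation retraction goes the other way --- $\CP^2-(C'\cup L')$ retracts onto $\CP^2-B_\e(C'\cup L')$, not vice versa --- which is in fact the direction you need to transfer the null-homotopy from the open complement to the complement of the neighbourhood.
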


\begin{proof}
We consider a particular family of complex cubics in $\CP^2$ given by the affine equations
$C_r=\{ y^2=x^3-r^2 x\}$, with $r>0$ small. As $r \to 0$, the cubic $C_r$ collapses to
a cuspidal rational curve $C_0=\{y^2=x^3\}$, which has trivial first homology group. 
It is known \cite{Le} that the vanishing cycles generate the homology $H_1(C_r)$.
Here we give an explicit description, as the loops 
 \begin{align*}
  \a_r&=\{(x,y)| x\in  [-r,0], y\in \RR, y^2=x^3-r^2 x\}, \\
  \b_r &=\{(x,y)| x=-x'\in [0,r], y=iy' \in i\RR, (y')^2 = (x')^3-r^2 x'\}.
  \end{align*}
Note that $\a_r,\b_r$ intersect transversally at one point, hence they generate $\pi_1(C_r)\cong \ZZ^2$.
The homotopies given by $\a_t$, $t\in [0,r]$, and $\b_t$, $t\in [0,r]$ (with base-point at $(0,0)$) 
produce discs that contract $\a_r,\b_r$. These discs do not intersect $C_r$. Now fix some $C'=C_r$ and
take a tubular neighbourhood $B_\e(C')$ by considering all $C_s$ with $|s-r|<\e$. Then we can homotop
the loops $\a=\a_r,\b=\b_r$ to $\hat\a=\a_{r-\e},\hat\b=\b_{r-\e}$ which lie at the boundary,
and can be contracted outside $B_\epsilon(C')$.

Finally, take a complex line $L'\subset \CP^2$ intersecting transversally $C'$, but well away from
the loops $\a_r,\b_r$ and the homotopies above (e.g.\ a small perturbation of the line at infinity).
Therefore all previous statement happen outside $B_\e(L')$.
\end{proof}

\begin{proposition}
We have that the fundamental group  
$\pi_1(X-(\tilde C_1\cup \ldots \cup \tilde C_{11}\cup \tilde G))=1$. In particular, $\pi_1^{\orb}(X')=1$.
\end{proposition}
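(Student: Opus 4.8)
The plan is to show that the fundamental group of the complement is generated by the meridians of the surfaces, and then to kill each meridian using Lemma \ref{lem:loops-cubic}.

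\textbf{Reduction to meridians.} Write $\Sigma=\tilde C_1\cup\ldots\cup\tilde C_{11}\cup\tilde G$. Since $X=\CP^2\#11\overline{\CP}{}^2$ is simply connected and $\Sigma$ is a disjoint union of smooth real-codimension-$2$ submanifolds, I would first argue that $\pi_1(X-\Sigma)$ is normally generated by the meridians $\delta_1,\ldots,\delta_{11},\delta_{12}$: any loop in $X-\Sigma$ bounds a disc in $X$, and after putting the disc in general position it meets $\Sigma$ transversally in finitely many points, each contributing a conjugate of a meridian. Hence it suffices to prove that each $\delta_i$ is null-homotopic in $X-\Sigma$.

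\textbf{Expressing meridians as commutators.} For a smooth surface $D\subset X$ of genus $g$ and self-intersection $e=[D]^2$, the boundary $\partial B_\epsilon(D)$ of a tubular neighbourhood is the circle bundle of Euler number $e$ over $D$, whose fundamental group satisfies
$$\prod_{k=1}^{g}[a_k,b_k]=\mu^{e},$$
where $\mu$ is the fibre class (the meridian) and $a_k,b_k$ are lifts of generators of $\pi_1(D)$. Pushing this relation into $X-\Sigma$ via the inclusion $\partial B_\epsilon(D)\hookrightarrow X-\Sigma$, and using the self-intersections from the Proposition above, I would treat the two types of surface. For each $\tilde C_i$ one has $g=1$ and $e=c_i^2=-1$, so $\delta_i$ is (up to inverse) the single commutator $[a_1,b_1]$ of loops lying on $\tilde C_i$. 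For $\tilde G$ one has $g=3$ and $e=d^2=1$, so $\delta_{12}=\prod_{k=1}^{3}[a_k,b_k]$ is a product of commutators of loops on $\tilde G$. Here the blow-up is essential: in $\CP^2$ the cubic has self-intersection $9$, which would only give $\delta^{9}=1$, whereas the value $e=-1$ in $X$ exhibits the meridian directly as a commutator.

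\textbf{Killing the loops.} It remains to show that the loops $a_k,b_k$ are null-homotopic in $X-\Sigma$. The point is that all the surfaces $C_1,\ldots,C_{11},G$ and all the blown-up points lie inside the embedded plumbing $P_c(C\cup L)$, hence inside the neighbourhood $B_\epsilon(C'\cup L')$ of the reference cubic and line of Lemma \ref{lem:loops-cubic}. Under this embedding the generators of $\pi_1(\tilde C_i)$, and likewise those coming from each of the three genus-$1$ pieces $\Theta_k$ of $\tilde G$, are homotopic to the vanishing cycles $\hat\alpha,\hat\beta$ of the lemma, since the relevant curves are $C^1$-close sections of $E\to C$ and $C$ is identified with $C'$. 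By the lemma these loops bound discs in $\CP^2-B_\epsilon(C'\cup L')$; as such discs avoid the neighbourhood they miss every blown-up point, lift to $X$, and avoid $\Sigma$ entirely. Thus each $a_k,b_k$ is trivial in $\pi_1(X-\Sigma)$, so every meridian is a product of commutators of trivial elements and hence trivial. This gives $\pi_1(X-\Sigma)=1$, and the presentation of $\pi_1^{\orb}(X')$ as a quotient of this group yields $\pi_1^{\orb}(X')=1$.

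\textbf{Main obstacle.} I expect the delicate point to be that the contracting discs must avoid \emph{all} twelve surfaces simultaneously, not merely the cubic whose loop is being contracted, and that the generators of each $\pi_1(\tilde C_i)$ and of the three pieces of $\tilde G$ must be correctly matched with the vanishing cycles of Lemma \ref{lem:loops-cubic}. This is precisely what the concentration of the whole configuration inside $B_\epsilon(C'\cup L')$, together with the fact that the vanishing-cycle discs escape that neighbourhood, is meant to guarantee.
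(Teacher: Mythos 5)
Your proposal is correct, and its essential mechanism coincides with the paper's: both proofs rest on the circle-bundle relation $\prod_k[a_k,b_k]=\mu^{e}$ on the boundary of a tubular neighbourhood of each surface, which is useful precisely because the blow-up makes $e=\pm1$, and both then contract the lifted generators $a_k,b_k$ by pushing them vertically out of the plumbing and applying Lemma \ref{lem:loops-cubic}. The only real difference is the reduction step. Writing $\Sigma$ for the union of the twelve surfaces, the paper decomposes $X=W\cup W'$, with $W$ a union of tubular neighbourhoods of the surfaces joined by arcs and $W'$ the complement of smaller neighbourhoods, and invokes Seifert--Van Kampen together with $\pi_1(X)=1$ to reduce to killing the loops of each boundary $Y_i=\bd B_\epsilon(\tilde C_i)$ and $Y_{12}=\bd B_\epsilon(\tilde G)$, whose fundamental groups it then presents explicitly. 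You instead show that the meridians normally generate $\pi_1(X-\Sigma)$ by the transverse-disc argument, and kill each meridian by exhibiting it as a product of commutators of killable loops. These reductions buy the same thing, but yours is slightly more robust as stated: what Van Kampen plus $\pi_1(X)=1$ literally yields is that $\pi_1(X-\Sigma)$ is \emph{normally} generated by the image of $\pi_1(W\cap W')$ (surjectivity can fail in this generality, as the complement of a knot in $S^3$ shows), and normal generation is also exactly what your meridian argument produces and all that either proof needs. The geometric input you would still have to write out in full --- that the lifted loops can be moved along the fibres of $E\to C$ to the boundary of the plumbing without meeting the other curves (possible because over $C-D$ all the curves are distinct constant sections), and that the contracting discs of Lemma \ref{lem:loops-cubic} avoid the entire configuration and hence lift to the blow-up --- is the same input the paper uses, and you have identified it correctly as the delicate point.
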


\begin{proof}
We constructed $C_1,\ldots, C_{11},G$ inside a plumbing $\mathbf{P}=P_c(C\cup L)$, and then we
have transferred it to a neighbourhood $\mathbf{P}'=P_c(C'\cup L'')$ of
a cubic $C'$ and a perturbation $L''$ of a line $L'$ in $\CP^2$. Note that $C'\cup L''$ is smoothly
isotopic to $C'\cup L'$. Then we blew-up at the eleven points $P_1,\ldots, P_{11}$ which lie
inside $\mathbf{P}'$, and took the proper transforms $\tilde C_1,\ldots, \tilde C_{11},\tilde G \subset \tilde{\mathbf{P}}{}'$,
where $\tilde{\mathbf{P}}{}'$ is the blow-up of $\mathbf{P}'$. Let $B_\epsilon(\tilde C_i), B_\epsilon(\tilde G)\subset \tilde{\mathbf{P}}{}'$ be small and
disjoint tubular neighbourhoods of $\tilde C_i, \tilde G$, $i=1,\ldots, 11$, respectively.

Put $X=W \cup W'$, with 
$$
W=\bigcup_i B_{2\epsilon}(\tilde C_i) \cup B_{2\epsilon}(\tilde G) \cup T_0 \,\, , \quad 
W'= X - \big(\bigcup_i B_{\epsilon}(\tilde C_i) \cup B_{\epsilon}(\tilde G) \big)
$$
where $T_0$ denotes an open contractible set constructed by fattening paths joining the base point
with the tubular neighbourhoods $B_{2 \epsilon}(\tilde C_i), B_{2 \epsilon}(\tilde G)$.
As $\pi_1(X)$ is trivial, Seifert Van-Kampen theorem shows that the map
 $$
 \pi_1(W \cap W') \longrightarrow \pi_1(W') \cong \pi_1(X-(\tilde C_1\cup \ldots \cup \tilde C_{11}\cup \tilde G))
 $$
is surjective. Note that $W \cap W'$ is homotopy equivalent to the wedge sum
$Y_1 \vee \dots \vee Y_{11} \vee Y_{12}$, 
where $Y_i=\bd B_\epsilon(\tilde C_i)$ is the boundary of a small tubular neighbourhood of $C_i$, and $Y_{12}=\bd B_\epsilon(\tilde G)$.
Hence it is enough to see that every loop in $Y_i$ for $1 \le i \le 11$
and every loop in $Y_{12}$ are contractible in $\pi_1(X-(\tilde C_1\cup \ldots \cup \tilde C_{11}\cup \tilde G))$. 

Take the plumbing $\mathbf{P}=P_c(C\cup L)$ and the curves $C_1,\ldots, C_{11},G$. 
We have decomposed $C=D\cup V$, where $D$ is a disc, so we may take $\a,\b$ inside $C-D$. 
For each of the cubics $C_i$, $\pi_1(C_i)$ is generated by loops $\a_i,\b_i$ which can
be taken by lifting $\a,\b$ via the sections $\hat\s_i$, $i=1,\ldots, 11$, of the complex line bundle $E\to C$. 
For the curve $G\subset \mathbf{P}$ of genus $3$, we have generators 
$\a^{(1)},\b^{(1)},\a^{(2)},\b^{(2)},\a^{(3)},\b^{(3)}$ of
the fundamental group $\pi_1(G)$ with $\prod_{j=1}^3 [\a^{(j)},\b^{(j)}]=1$. These can be
taken by lifting the loops $\a,\b$ via the sections $\hat\tau_j$, $j=1,2,3$.
The base point is also chosen outside the disc $D$.
In  $\mathbf{P}-(C_1\cup \ldots C_{11}\cup G)$, we can move vertically (along the fiberwise
directions of the bundle $E\to C$) all the loops $\a_i,\b_i, \a^{(j)}, \b^{(j)}$ without
touching the other curves. Once we reach the boundary of $\mathbf{P}\cong\mathbf{P}'$, these can be contracted in the complement
$\CP^2-\mathbf{P}'$ by Lemma \ref{lem:loops-cubic} above.

Now we blow-up inside $\mathbf{P}$ the eleven points $P_1,\ldots, P_{11}$ to obtain
$\tilde{\mathbf{P}}$ and the proper transforms $\tilde C_1,\ldots, \tilde C_{11},\tilde G$. 
Consider $Y_i=\bd B_\epsilon(\tilde C_i)$ as before. This is a circle bundle $S^1 \to Y_i 
=\bd B_\epsilon(\tilde C_i) \to \tilde C_i$
with Chern class $c_1(Y_i)=[\tilde C_i]^2=-1$. 
We have a short exact sequence 
$$
0 \to \pi_1(S^1) \to \pi_1(Y_i) \to \pi_1(\tilde C_i) \to 0.
$$
Since we are away from the blow-up locus we call the generators of $\pi_1(\tilde C_i)$ again $\a_i, \b_i$.
The loop $[\a_i,\b_i]$ can be homotoped in $B_\epsilon(\tilde C_i)$
to the base point through a homotopy transversal to $\tilde C_i$. This homotopy
intersects $\tilde C_i$ in $\tilde C_i^2=-1$ points counted with signs. 
Via the retraction
$B_e(\tilde C_i)-\tilde C_i \to Y_i$, 
this gives a homotopy in $Y_i$ between the lifting of $[\a_i,\b_i]$ and
$\g_i^{-1}$, where $\g_i$ is the loop going along the fiber $S^1$.
We conclude that
  $$
   \pi_1(Y_i)= \la \a_i,\b_i,\g_i\,  | \, [\a_i,\b_i]=\g_i^{-1} ,\g_i \text{ central}\ra.
   $$
Note that $\a_i,\b_i$ can be moved to $Y_i$ without touching the other cubics $\tilde C_j$ and then contracted in $\CP^2-\mathbf{P}$ via the blow-up map. 
The conclusion is that $\a_i$ and $\b_i$ can be contracted to a point through a homotopy in $X-(\tilde C_1\cup \ldots \cup \tilde C_{11}\cup \tilde G)$.
Therefore the same happens to $\g_i$.

Analogously, $Y_{12}=\bd B_\epsilon(\tilde G)$
 is a circle bundle $S^1\to Y_{12}=\bd B_\epsilon(\tilde G) \to \tilde G$
with Chern class $c_1(Y_{12})=[\tilde G]^2=1$. Denoting by $\g_{12}$ the loop along the fiber $S^1$, we have that
  $$
   \pi_1(Y_{12})= \la \a^{(1)},\b^{(1)},\a^{(2)},\b^{(2)},\a^{(3)},\b^{(3)}, \g_{12}| \prod_{j=1}^3 [\a^{(j)},\b^{(j)}]=\g_{12},
    \g_{12} \text{ central}\ra.
   $$
The loops $\a^{(j)},\b^{(j)}$ can be moved to the boundary $Y_{12}$ and then contracted in $\CP^2-\mathbf{P}$ via the blow-up map. Thus 
the same happens to $\g_{12}$. So all generators of $\pi_1(\bd B_\epsilon(\tilde C_i))$, $i=1,\ldots,11$, and of $\pi_1(\bd B_\epsilon(\tilde G))$, become
trivial in $\pi_1(X-(\tilde C_1\cup \ldots \cup \tilde C_{11}\cup \tilde G))$.
This concludes the proof.
\end{proof}

Once we have the symplectic orbifold $X'$, we construct
a Seifert bundle $M\to X'$ with primitive Chern class $c_1(M/e^{2\pi i/\mu})=[\omega]$.
This is a K-contact manifold, which is simply-connected.

\begin{theorem} 
The $5$-manifold $M$ is simply-connected, hence it is a Smale-Barden manifold.
\end{theorem}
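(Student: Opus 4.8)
The plan is to combine the triviality of the orbifold fundamental group, just established in the preceding Proposition, with the long exact homotopy sequence of the Seifert bundle $\pi:M\to X'$. First I would invoke the exact sequence
$$
\pi_1(S^1)=\ZZ \too \pi_1(M) \too \pi_1^{\orb}(X') \too 1
$$
recalled in the Introduction. Since the preceding Proposition gives $\pi_1^{\orb}(X')=1$, the map $\ZZ\to\pi_1(M)$ is surjective, so $\pi_1(M)$ is a cyclic group; in particular it is abelian. The one point to keep track of here is that the image of $\ZZ$ in $\pi_1(M)$ is generated by the class of a generic $S^1$-fiber, which is central, so that $\pi_1(M)$ is genuinely cyclic.

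Being abelian, $\pi_1(M)$ coincides with its abelianization $H_1(M,\ZZ)$. The homology of $M$ was computed in (\ref{eqn:homol2}) by means of Theorem \ref{thm:12}, and there $H_1(M,\ZZ)=0$. Hence $\pi_1(M)\cong H_1(M,\ZZ)=0$, so $M$ is simply connected. As $M$ is a compact oriented $5$-manifold, by definition it is a Smale-Barden manifold.

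The genuinely hard part of this argument lies not in these final formal deductions but in the preceding Proposition, where one must verify that every loop around each of the isotropy surfaces $\tilde C_i$ and $\tilde G$ becomes contractible in $\pi_1(X-(\tilde C_1\cup \ldots \cup \tilde C_{11}\cup \tilde G))$, using the explicit plumbing description and Lemma \ref{lem:loops-cubic}. Once $\pi_1^{\orb}(X')=1$ is in hand, the passage to $\pi_1(M)$ is immediate from the Seifert exact sequence together with the homology computation, so I expect no further obstacle in proving the statement as formulated.
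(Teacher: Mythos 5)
Your proposal is correct and follows essentially the same route as the paper: the exact sequence $\pi_1(S^1)=\ZZ \to \pi_1(M) \to \pi_1^{\orb}(X')=1$ (from \cite[Theorem 4.3.18]{BG}) forces $\pi_1(M)$ to be a quotient of $\ZZ$, hence abelian, and then $\pi_1(M)=H_1(M,\ZZ)=0$ by Theorem \ref{thm:12}. The only superfluous point is your appeal to centrality of the fiber class: surjectivity of $\ZZ\to\pi_1(M)$ alone already makes $\pi_1(M)$ cyclic, so no centrality argument is needed.
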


\begin{proof}
 By \cite[Theorem 4.3.18]{BG}, we have an exact sequence $\pi_1(S^1)=\ZZ \to \pi_1(M) \to \pi_1^{\orb}(X')=1$.
 In particular, $\pi_1(M)$ is abelian. Therefore $\pi_1(M)=H_1(M,\ZZ)=0$, by Theorem \ref{thm:12}.
\end{proof}

%%%%%%%%%%%%%%%%%%%%%%%%%%%%%%%%%%%%%%%
\section{Non-existence of an algebraic surface with the given pattern of curves} \label{sec:5}
%%%%%%%%%%%%%%%%%%%%%%%%%%%%%%%%%%%%%%%

In this Section we show that it is not possible to construct an algebraic surface with the same 
configuration of complex curves as the manifold we constructed in Section \ref{sec:3}, that is 
$12$ disjoint complex curves spanning $H_2(S,\QQ)$, one of genus $3$ and all the others 
of genus $1$. More concretely, we prove Theorem \ref{thm:conj2}. 

\begin{theorem}\label{thm:a-g}
Suppose $S$ is a complex surface with $b_1=0$ and disjoint smooth complex curves 
spanning $H_2(S,\QQ)$, one of them of genus $g\geq 1$ and all the others elliptic (thus of genus $1$). 
Then $b_2\leq2g^2-4g+3$.
\end{theorem}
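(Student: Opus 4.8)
The plan is to exploit the numerical constraints imposed by the intersection form together with the adjunction formula and Hodge-theoretic positivity. Let $S$ be a complex surface with $b_1=0$ and disjoint smooth curves $D_1,\dots,D_b$ spanning $H_2(S,\QQ)$, where $g(D_i)=1$ for $i<b$ and $g(D_b)=g\geq 1$. Write $D_i^2 = d_i$ for the self-intersections. Since the $D_i$ are disjoint, $D_i\cdot D_j = 0$ for $i\neq j$, so the intersection form is \emph{diagonal} with respect to this basis: its matrix is $\operatorname{diag}(d_1,\dots,d_b)$. This is the key rigidity. The first step I would take is to record what adjunction gives: for each elliptic curve, $2g(D_i)-2 = 0 = D_i^2 + K\cdot D_i$, so $K\cdot D_i = -d_i$; for the genus $g$ curve, $K\cdot D_b = 2g-2 - d_b$. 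Here $K$ is the canonical class of $S$.

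The second step is to pin down the canonical class itself. Because the $D_i$ span $H_2(S,\QQ)$, I can write $K = \sum_i a_i D_i$ rationally, and the diagonality of the intersection form lets me solve for the coefficients immediately: pairing with $D_j$ gives $K\cdot D_j = a_j d_j$, so $a_j = (K\cdot D_j)/d_j = -1$ for $j<b$ (provided $d_j\neq 0$) and $a_b = (2g-2-d_b)/d_b$. Thus $K$ is almost $-\sum D_i$, corrected in the last slot. I would then compute $K^2 = \sum_i a_i^2 d_i$ using diagonality and compare it with Noether's formula $K^2 + \chi_{\mathrm{top}}(S) = 12\chi(\mathcal{O}_S)$. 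Since $b_1=0$ and $b_2=b$, we have $\chi_{\mathrm{top}}=2+b$, and $\chi(\mathcal{O}_S)=1+p_g$, so $K^2 = 12(1+p_g) - (2+b) = 10 + 12p_g - b$. This turns the geometric problem into a numerical inequality once I control $K^2$ and the signs/values of the $d_i$.

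The third step is to bound the $d_i$. The signature theorem plus the diagonal form tells me how many $d_i$ are positive versus negative: the signature $\tau = \sum_i \operatorname{sgn}(d_i)$ equals $b_2^+ - b_2^-$, and Hodge index says $b_2^+ = 1 + 2p_g$ (as $b_1=0$). So exactly $1+2p_g$ of the $d_i$ are positive and the remaining $b-1-2p_g$ are negative. For a smooth curve of genus $0$ one needs $d_i\geq -1$ by adjunction, but here every $g_i\geq 1$, and a smooth curve with $d_i<0$ and $g_i\geq 1$ forces, via adjunction $K\cdot D_i = 2g_i-2-d_i > 0$, certain positivity of $K$ against it; for the elliptic curves this reads $K\cdot D_i = -d_i$, so a \emph{negative} $d_i$ gives $K\cdot D_i>0$. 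I expect the crux to be combining these: the positive-square curves contribute to $K^2$ while Hodge index caps their number, and the elliptic curves with negative square force $K$ to be effective-ish, constraining $p_g$. \textbf{The main obstacle} I anticipate is controlling the genus-$g$ curve's self-intersection $d_b$ and its interaction with $p_g$: the final bound $b\leq 2g^2-4g+3$ is quadratic in $g$, which strongly suggests that the sharp estimate comes from an application of the Hodge index inequality (Hodge--Riemann) to the pair consisting of $D_b$ and $K$ (or a sublattice), yielding $(K\cdot D_b)^2 \geq K^2\, d_b$ when both lie in appropriate positive/negative cones, and then feeding $K\cdot D_b = 2g-2-d_b$ through to extract a bound on $b$ via $K^2 = 10+12p_g-b$. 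Making the positivity hypotheses of the index theorem rigorous in the presence of the disjoint configuration, and optimizing over the free parameters $d_b$ and $p_g$ to land exactly on $2g^2-4g+3$, is where the real work will be.
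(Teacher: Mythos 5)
Your opening moves (diagonality of the intersection form, adjunction, solving for $K$ in the basis $\{D_i\}$, Noether's formula) coincide with the paper's first steps, but the strategy has a fatal gap at the end, plus a smaller omission. The omission: you carry $p_g$ as a free parameter to be ``optimized over''. In fact $p_g=0$ is forced at the outset: the classes $[D_i]$ are of type $(1,1)$ and span $H^2(S,\QQ)$, so $h^{1,1}=b_2$, $h^{2,0}=0$, hence $\chi(\cO_S)=1$, $K^2=10-b$, and the signature is $(1,b-1)$, i.e.\ exactly \emph{one} $D_i$ has positive square. Even then, you would still need to decide \emph{which} curve is the positive one; for $g\geq 2$ the paper needs a real argument here (a positive-square elliptic curve would give a pencil, hence an elliptic fibration in which all the other disjoint curves sit in fibers, contradicting $g>1$), and nothing in your proposal addresses this.

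The fatal gap is that your endgame cannot produce any bound on $b$. Say $D_b$ is the positive curve, $d_b>0$ and $d_i=-m_i<0$ for $i<b$. Diagonality gives $K^2=\frac{(2g-2-d_b)^2}{d_b}-\sum_{i<b}m_i$, so the Hodge index inequality you want to invoke, $(K\cdot D_b)^2\geq K^2\,d_b$, reduces to $d_b\sum_{i<b}m_i\geq 0$: it is automatically implied by the diagonal structure and carries no information. More decisively, the complete list of numerical constraints at your disposal (diagonality, adjunction, Noether, signature) admits solutions for arbitrarily large $b$: take $d_b=1$, so the Noether constraint reads $(2g-3)^2-\sum_{i<b}m_i=10-b$, which is solved by positive integers $m_i$ whenever $(2g-3)^2\geq 9$, i.e.\ for every $g\geq 3$ and every $b$. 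Hence no manipulation of these identities alone can yield $b\leq 2g^2-4g+3$; a geometric input is indispensable. In the paper this input is the linear system $|K_S+D_1|$ (where $D_1$ is the genus-$g$ curve): it is nonempty of dimension $g-1$ because $h^0(K_S+D_1)=h^0(K_{D_1})=g$, and after splitting off the fixed part $Z$, the free part $F$ satisfies $F^2\geq 0$ while every elliptic curve of square $-1$ not contained in $Z$ forces a base point of $F$ (a degree-one effective divisor on an elliptic curve is a fixed point). Counting those base points against $F^2$ is precisely what puts $b$ on the small side of an inequality, giving $2b\leq 4g-2-m_1+\frac{(2g-2-m_1)^2}{m_1}\leq 4g^2-8g+6$. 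Your proposal has no counterpart to this linear-system/base-point count, and without one the approach stalls exactly where you flagged ``the real work''.
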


\begin{proof}
Let $S$ be a complex surface with $b_1=0$, containing 
disjoint complex curves spanning $H_2(S,\QQ)$, one of them, say $D_1$, of genus $g$ 
and the other curves $D_2, \ldots, D_{b_2}$ all of genus $1$.

As $\set{D_1,\ldots, D_{b_2}}$ is a basis of $H_2(S,\QQ)$, the Poincar\'e duals $[D_1],\ldots ,[D_{b_2}]$ 
are a basis of $H^2(S,\QQ)$. Furthermore, these classes are all of type $(1,1)$, so we 
have that $h^{1,1}=b_2$ and the geometric genus is $p_g=h^{2,0}=0$. The irregularity is $q=h^{1,0}=0$ since $b_1=0$.
In particular, $S$ is an algebraic surface \cite{BPV}. The holomorphic Euler characteristic is 
 \begin{equation}\label{eqn:chios}
 \chi(\cO_S)=1-q+p_g=1.
 \end{equation}
By the Riemann-Hodge relations, the signature of $H^{1,1}(S)$ is $(1,b_2-1)$. Therefore, 
the self-intersection of one of the $D_i$'s is positive and it is negative for the others.

\noindent {\bf Case 1.} Assume for the moment that $g=g(D_1)\geq 2$. 
We show first that $D_1^2>0$. Suppose otherwise that $D_i^2>0$ for one of the genus $1$ curves. 
After reordering, we can suppose this is true for $D_2$.
By the adjunction formula, $K_S\cdot D_2+D_2^2=2g(D_2)-2=0$, so $K_S\cdot D_2=-D_2^2$. 
And, by Riemann-Roch, we have,
 $$
 \chi(D_2) = \chi(\cO_S) + \frac{D_2^2 - K_S\cdot D_2}{2} = 1 + D_2^2.
 $$
Hence using Serre duality,
 $$
  h^0(D_2)+h^0(K_S-D_2)=h^0(D_2)+ h^2(D_2) \geq \chi(D_2)= 1+D_2^2\geq2.
  $$ 
Also, from the exact short sequence 
$0\to\cO_S(K_S-D_2)\to\cO_S(K_S)\to\cO_{D_2}(K_S|_{D_2})\to0$ we deduce that 
$h^0(K_S-D_2)=0$, since $h^0(K_S)=h^{2,0}(S)=0$. Thus $h^0(D_2)\geq 2$ and 
we can consider a pencil $\PP^1\leq|D_2|$. This gives a rational map $S\dashrightarrow\CP^1$ 
and, after blowing-up the base points of the pencil, an elliptic fibration $\tilde S\to \CP^1$, 
with the proper transform of $D_2$ as a smooth fiber. However, since they are pairwise disjoint, all the 
proper transforms of $D_i$, $i\neq 2$ have to lie in fibers. To see this, 
consider the projections of the proper transforms of $D_i$ by the elliptic fibration $\tilde S\to \CP^1$.
These projections must be Zariski-closed, connected subsets of $\CP^1$. As they are not all
$\CP^1$ since they do not intersect the proper transform of $D_2$, they should be points. 
In particular, since all the fibers are 
connected, the arithmetic genus of each irreducible component of a fiber has to be at most $1$, which 
gives rise to a contradiction as $g(D_1)=g>1$. Therefore, $D_1^2>0$ and $D_2^2,\ldots, D_{b_2}^2<0$.

Denote $m_1=D_1^2$ and $m_i=-D_i^2$, $i=2,\ldots, b_2$. All of the $m_i$'s are positive integers. 
And write $K_S\equiv\sum_{i=1}^{b_2}\lambda_iD_i$ its homology class in $H_2(S,\QQ)$, with $\lambda_i\in\QQ$. 
Notice that $K_S\cdot D_i=\lambda_iD_i^2$, from where $\lambda_i=\frac{K_S\cdot D_i}{D_i^2}$. And, by the adjunction formula,
\begin{align} \label{eqn:JR}
 &K_S\cdot D_1=2g(D_1)-2-D_1^2=2g-2-m_1, \nonumber \\
 &K_S\cdot D_i=2g(D_i)-2-D_i^2=m_i, \quad i\geq2.
\end{align}
Therefore
 $$
  K_S\equiv\frac{2g-2-m_1}{m_1}D_1-\sum\limits_{i=2}^{b_2}D_i\, ,
  $$
and we get
 \begin{equation}\label{eqn:o1}
  K_S^2=\frac{(2g-2-m_1)^2}{m_1}-\sum_{i=2}^{b_2}m_i\, .
 \end{equation}

Consider the following short exact sequence of sheaves,
 $$
  0\too \cO(K_S) \too\cO(K_S+D_1)\too \cO_{D_1}(K_{D_1}) \too0,
  $$
where $K_{D_1}=(K_S+D_1)|_{D_1}$, by adjuction. This gives a long exact sequence in cohomology,
 $$
 0\too H^0(K_S)\too H^0(K_S+D_1)\too H^0(K_{D_1})\too H^1(K_S)\too\ldots
 $$
where $H^0(K_S)=H^{2,0}(S)=0$ and $H^1(K_S)=H^1(\mathcal{O}_S)=H^{0,1}(S)=0$. So we have 
an isomorphism $H^0(K_S+D_1) \cong H^0(K_{D_1})$, and we deduce that $h^0(K_S+D_1)=h^0(K_{D_1})=g$. 
In particular, the linear system $|K_S+D_1|$ is not empty, and it has dimension $g-1\geq 1$. 
Let $Z=Z(|K_S+D_1|)$ be the fixed part of $|K_S+D_1|$ (that is, the largest effective divisor such that $D\geq Z$
for all $D\in |K_S+D_1|$). 
Notice that $Z\cdot D_1=0$, since the restriction of the linear system to $D_1$, $|(K_S+D_1)|_{D_1}|=|K_{D_1}|$, 
has no fixed points, as $g\geq 2$.

Write now $Z$ as an effective divisor $Z=\sum_{i=1}^{b_2}\alpha_iD_i+T$ where $\alpha_i$ 
are non-negative integers and $T$ is an effective divisor not containing any of the $D_i$'s. 
Notice that the latter implies $T\cdot D_i\geq0$, for all $i$.
Since $0=Z\cdot D_1=\alpha_1m_1+T\cdot D_1$, we have $\alpha_1=0$ and $T\cdot D_1=0$. 
So we can write $Z=\sum_{i=2}^{b_2}\alpha_iD_i+T$, and $T$ does not intersect $D_1$.

Let us see that $T=0$. Write $T\equiv\sum_{i=1}^{b_2}\mu_iD_i$ its homology class in $H_2(S,\QQ)$, 
with $\mu_i\in\QQ$. First note that, since $T\cdot D_1=0$, it is $\mu_1=0$, so $T\equiv\sum_{i=2}^{b_2}\mu_iD_i$.  
For $i\geq2$, $0\leq T\cdot D_i=-\mu_im_i$, hence $\mu_i\leq 0$. 
Let $n\geq1$ be an integer such that $n\mu_i\in\ZZ$ for all $i$.  Hence $nT$ is efective and $-nT= \sum (-n\mu_i)D_i$ is
also effective. This implies that $nT=0$ and thus $T=0$. This means that the fixed part is $Z=\sum_{i=2}^{b_2}\alpha_iD_i$.

Write $|K_S+D_1|= Z+ |F|$, where $F$ is the free part, which is a fully movable divisor. We look now
at the self-intersection $F^2=(K_S+D_1-Z)^2\geq 0$. Recall that the self-intersection of a fully movable divisor is $F^2\geq 0$,
since taking a different $F'\equiv F$, such that $F$ and $F'$ do not share components, then $F^2=F\cdot F'\geq 0$. 

Let $j\geq2$ and suppose both that $m_j=1$ and $D_j\nleq Z$. In this case, the restriction of an effective divisor
$C\in|K_S+D_1|$ to $D_j$ is an effective degree $1$ divisor, since $(K_S+D_1)\cdot D_j= K_S\cdot D_j=m_j=1$, by (\ref{eqn:JR}).
So $C\cap D_j$ is a point $P_j$. Furthermore, 
since $D_j$ is not a rational curve, any pair of linearly equivalent points are actually equal. Therefore 
$P_j\in D_j$ is a fixed point of $|K_S+D_1|$ and, since $Z\cap D_j=\emptyset$, $P_j\in F$. So $P_j$ 
is counted in the self-intersection $(K_S+D_1-Z)^2$.

There are at most $\sum_{i=2}^{b_2}m_i-(b_2-1)$ curves among $D_2,\ldots, D_{b_2}$ with $m_i>1$. 
So there are at least $2(b_2-1)-\sum_{i=2}^{b_2}m_i$ curves with self-intersection $-1$. 
Hence there are at least $2(b_2-1)-\sum_{i=2}^{b_2}m_i-r$ fixed points $P_j\in D_j$ of some
$|(K_S+D_1-Z)|_{D_j} |$, where $r=\#\set{\alpha_i>0}$, and thus
 \begin{equation}\label{eqn:o2}
  (K_S+D_1-Z)^2\geq2(b_2-1)-\sum_{i=2}^{b_2}m_i-r.
 \end{equation}
Note that in case we have $2(b_2-1)-\sum_{1=2}^{b_2}m_i-r\leq0$ we cannot assure the 
existence of any fixed point but the inequality still holds since $(K_S+D_1-Z)^2\geq0$.

We now compute $(K_S+D_1-Z)^2$,
 \begin{align*}
  (K_S+D_1)^2 &=K_S^2+2K_S\cdot D_1+D_1^2=K_S^2+2(2g-2-m_1)+m_1 = \\ 
  &=K_S^2+4g-4-m_1\, , \\
 (K_S+D_1-Z)^2&=(K_S+D_1)^2-2(K_S+D_1)\cdot Z+Z^2 = \\ 
  &=K_S^2+4g-4-m_1-2\sum_{i=2}^{b_2}\alpha_im_i-\sum_{i=2}^{b_2}\alpha_i^2m_i\, .
  \end{align*}
Thus (\ref{eqn:o2}) gives 
 $$
 K_S^2+4g-4-m_1-2\sum_{i=2}^{b_2}\alpha_im_i-\sum_{i=2}^{b_2}\alpha_i^2m_i\geq2(b_2-1)-\sum_{i=2}^{b_2}m_i-r\, ,
 $$
from where
 \begin{align*}
 2b_2 &\leq 4g-2-m_1+K_S^2+\sum_{i=2}^{b_2}m_i+r-2\sum_{i=2}^{b_2}\alpha_im_i-\sum_{i=2}^{b_2}\alpha_i^2m_i \\
 &\leq 4g-2-m_1+K_S^2+\sum_{i=2}^{b_2}m_i+r-3r\leq4g-2-m_1+K_S^2+\sum_{i=2}^{b_2}m_i \, .
  \end{align*}
Using (\ref{eqn:o1}), we have
 $$
  2b_2\leq4g-2-m_1+\frac{(2g-2-m_1)^2}{m_1}\, .
  $$
The expression on the right is a decreasing function on $m_1$. Therefore, we can bound it by its value in $m_1=1$, that is
 $$
 2b_2\leq 4g-3+(2g-3)^2=4g^2-8g+6.
 $$
Hence $b_2\leq 2g^2-4g+3$, as required.

\noindent \textbf{Case 2.} Suppose now $g=1$. Using the adjunction formula, we get
$K_S\equiv-\sum_{i=1}^{b_2}D_i$.
And using the same argument as above, we have that $h^0(K_S+D_1)=h^0(K_{D_1})=g=1$. 
Thus, there is an effective divisor in $S$ linearly equivalent to $K_S+D_1=-\sum\limits_{i=2}^{b_2}D_i$ 
which is clearly anti-effective if $b_2\geq2$. Therefore, $b_2\leq1$.
\end{proof}

\bigskip

%\begin{remark}
Let us end up by giving a different proof of the non-existence of a Kähler surface $S$ with $b_1=0$ and $b_2=12$,
containing disjoint smooth complex curves 
spanning $H_2(S,\QQ)$, one of them of genus $g=3$, all the others of genus $g_i=1$. It makes very specific use
of the numbers at hand.

We follow the notations in the proof of Theorem \ref{thm:a-g}. We have the curves $D_1,D_2,\ldots, D_b$, $b=12$,
with $D_1^2=m_1$, $D_i^2=-m_i$, $2\leq i\leq b$, and all $m_i$'s are positive integers. The curve $D_1$ has
genus $g=3$ and $D_i$ have genus $1$, $2\leq i\leq b$. 
By (\ref{eqn:chios}) and Noether's formula \cite{BPV} we have that 
 $$
 \frac{1}{12}(K_S^2+c_2(S)) = \chi(\cO_S)=1-q+p_g=1.
 $$
Note that $c_2(S)=\chi(S)=2+b$, where $b=b_2=12$ and $b_1=b_3=0$. Therefore
$K_S^2=10-b=-2$. Now (\ref{eqn:o1}) says that
 $$
 -2= K_S^2= \frac{(4-m_1)^2}{m_1} - m_2-\ldots - m_b  \leq \frac{(4-m_1)^2}{m_1} - 11,
 $$
using that $g=3$. Therefore $(4-m_1)^2 \geq 9m_1$, which is rewritten as $(m_1-16)(m_1-1)\geq 0$. 

If $m_1\geq 16$, then  the curve $D_1$ of genus $g=3$ has self-intersection $D_1^2\geq 2g+1$. 
The argument of \cite[Theorem 32]{MRT} concludes that $b\leq 2g+3$. 
This is a contradiction since $g=3$ and $b=12$.

Therefore we have that $m_1=1$. So
 $$
 K_S= 3 D_1-D_2 - \ldots - D_b
 $$
and $K_S^2=-2=9-m_2-\ldots -m_b \leq 9-11=-2$. Therefore there must be equality and $m_2=\ldots =m_b=1$.
The basis $\{D_1,D_2,\ldots ,D_b\}$ is a diagonal basis of $H_2(S,\ZZ)$.
Now we try to reconstruct $S$ in ``reverse''. Let $H,E_2,\ldots, E_b \in H_2(S,\ZZ)$ be defined by the equalities:
 \begin{align*}
 D_1 &=  10 H- 3E_2-\ldots -3E_b \, ,\\
 D_j &=  (3H -E_2-\ldots - E_b) + E_{j}, \qquad j=2,\ldots, b.
 \end{align*}
This is solved as:
 $$
 \begin{array}{rl}
  H &=   10 D_1 - 3D_2-  \ldots - 3D_b \, ,\\
% \sum\limits_{k=2}^b E_k & = 33 D_1 - 10 \sum_{k=2}^b D_k \\
 E_j &=  %D_j - (3H-\sum E_k) = 
 3D_1 +D_j -\sum\limits_{k=2}^b D_k \, , \ j= 2,\ldots, b, \\
 K_S &= - 3H + \sum\limits_{k=2}^b E_k \, .
 \end{array}
 $$
The following self-intersections are easily computed:
 \begin{align*}
&  H^2=1, \quad & & H\cdot E_j=0, \ j=2,\ldots, b,\\
 & E_j^2=-1, \quad & & E_j \cdot E_k =0, \ j\neq k, \\
 & K_S\cdot H=-3, \quad &&K_S\cdot E_j=-1, \ j=2,\ldots, b, \\ 
&  D_j\cdot E_j=0, \qquad   & & D_j\cdot E_k=1, \ j\neq k.
 \end{align*}

Now let us prove that the classes $H,E_2, \ldots, E_b$ are defined by effective divisors. First
$\chi(H)= 1 + \frac{H^2-K_S\cdot H}{2} = 3$.
Also $h^0(K_S-H)=0$ since $K_S-H\leq K_S$ and $h^0(K_S)=0$.
Thus it must be $h^0(H)\geq 3$ and $H$ is effective.
Next $\chi(E_j)=1+\frac12 (E_j^2-K_S\cdot E_j)=1$. 
Also $h^0(K_S-E_j)=0$ since $(K_S-E_j) =-D_j <0$. Therefore $h^0(E_j)\geq 1$ and $E_j$ is also effective.

Next note that $K_S+D_j=E_j$. Consider the long exact sequence in cohomology associated to the exact sequence
$$
 0\to \cO(K_S)\to \cO(K_S+D_j)\to \cO_{D_j}(K_{D_j}) \to 0.
 $$ 
 As $H^0(K_S)=H^1(K_S)=0$, we have that $h^0(E_j)=h^0(K_S+D_j)=h^0(\cO_{D_j}(K_{D_j}))=1$, since 
 $D_j$ is an elliptic curve. Also $h^2(E_j)=h^0(K_S-E_j)=0$, and hence $h^1(E_j)=0$ since $\chi(E_j)=1$.

Consider now the exact sequence
 \begin{equation}\label{eqn:oo2}
 0 \to \cO \to \cO(D_2+\ldots + D_b) \to \bigoplus_{j=2}^b \cO_{D_j}(D_j) \to 0,
 \end{equation}
which holds since $D_j$ are disjoint. As $D_j^2=-m_j=-1$, and $D_j$ is an elliptic curve,
we have $h^0(\cO_{D_j}(D_j))=0$ and $h^1(\cO_{D_j}(D_j))=1$. Therefore
(\ref{eqn:oo2}) and the fact that $h^1(\cO)=h^2(\cO)=0$ implies that 
$h^0(D_2+\ldots + D_b)=1$ and $h^1(D_2+\ldots+D_b)= b-1=11$.
Using that $3D_1 \equiv D_2+\ldots + D_{b-1}+E_b$, we have an exact sequence
 $$
 0 \to \cO(E_b) \to \cO(3D_1) \to \bigoplus_{j=2}^{b-1} \cO_{D_j}(E_b)  \to 0.
 $$
As $D_j\cdot E_b=1$ and $D_j$ is elliptic, we have $h^0( \cO_{D_j}(E_b) )=1$. Then
$h^0(3D_1)=b-1=11$, using $h^0(E_b)=1$ and $h^1(E_b)=0$ computed before. % and $h^1(3D_1)=$, using $h^

Now we compute $h^0(3D_1)$ in a different way. We have exact sequences:
\begin{align*}
 & 0 \to \cO  \to  \cO(D_1) \to \cO_{D_1}(D_1) \to 0 \\
 & 0 \to \cO(D_1)  \to  \cO(2D_1) \to \cO_{D_1}(2D_1) \to 0 \\
 & 0 \to \cO(2D_1) \to  \cO(3D_1) \to \cO_{D_1}(3D_1) \to 0 
\end{align*}
so 
 \begin{align}\label{eqn:ooo}
 h^0(3D_1)&\leq h^0(2D_1)+ h^0(\cO_{D_1}(3D_1))  \nonumber\\ &\leq 
h^0(D_1)+ h^0(\cO_{D_1}(2D_1))+ h^0(\cO_{D_1}(3D_1)) \nonumber\\ & \leq
1+h^0(\cO_{D_1}(D_1)) +h^0(\cO_{D_1}(2D_1)) +h^0(\cO_{D_1}(3D_1)).
\end{align}
We use Clifford's theorem \cite[p.\ 107]{ACGH} that says that for a curve of genus $g\geq 1$
and a divisor $D$ of degree $0\leq d \leq 2g-2$, we have $h^0(D) \leq \left[\frac{d}{2}\right]+1$.
Applying this to the curve $D_1$, we have $h^0(\cO_{D_1}(D_1))\leq 1$, 
$h^0(\cO_{D_1}(2D_1))\leq 2$, and $h^0(\cO_{D_1}(3D_1))\leq 2$, recalling that $D_1^2=1$.
Therefore (\ref{eqn:ooo}) says that it $h^0(3D_1)\leq 1+1+2+2=6$. This is a contradiction with
the previous computation of $h^0(3D_1)$.

\medskip

%%%%%%%%%%%%%%%%%%%%%%%%%%%%%%%%%%%%%%%%
\section{The second Stiefel-Whitney class of the Smale-Barden manifold}
%%%%%%%%%%%%%%%%%%%%%%%%%%%%%%%%%%%%%%%%

We end up with a proof of the last comments in the Introduction. 

We start by computing the Stiefel-Whitney class $w_2(M)$ of the $5$-manifold $M$ constructed in Corollary
\ref{cor:spin}. This manifold is a Seifert circle bundle $\pi:M\to X'$ over the cyclic $4$-orbifold constructed
with the manifold $X=\CP^2\#11\overline{\CP}{}^2$ of Section \ref{sec:3}, 
ramified over the curves $\tilde C_1,\ldots, \tilde C_{11},\tilde C_{12}=\tilde G$, with multiplicities $m_i=p^i$,
$i=1,\ldots, 12$, and $p$ a fixed prime.

\begin{remark}
 We could use other powers of $p$ for the $m_i$'s, as long as they are distinct. Also we can use $m_i=p^i\tilde m_i$,
 with $\gcd(\tilde m_i,p)=1$. However, for the computations below we stick to our choice $m_i=p^i$.
\end{remark}

The Seifert bundle $\pi:M\to X'$ is determined by the Chern class
 $$
 c_1(M/X')=c_1(B)+\sum \frac{b_i}{m_i} [\tilde C_i],
 $$
where $b_i$ are called the orbit invariants (they should satisfy $\gcd(b_i,m_i)=1$), and
$B$ is a suitable line bundle over $X$. By Theorem \ref{thm:12}, we have to impose the condition 
that the cohomology class
 $$
 c_1(M/e^{2\pi i \mu})=\mu \, c_1(B) +\sum b_i\frac{\mu}{m_i}[\tilde C_i] =
 p^{12} c_1(B) +\sum b_i p^{12-i} [\tilde C_i] 
 $$
is primitive and represented by some orbifold symplectic form $[\hat \o]$, being $\mu=\lcm(m_i)=p^{12}$. 
The proof of \cite[Lemma 20]{MRT} shows that in order to ensure this
we can take $b_i=1$ and a class $a=c_1(B) \in H^2(X,\ZZ)$ with
$a=\sum a_i  [\tilde C_i]$ and $\gcd(pa_1+1, p^2a_2+1)=1$.
Certainly, with this condition on the class $a$ and the numbers $b_i$, the Chern class is 
$$
c_1(M/e^{2\pi i \mu})=\sum p^{12-i}( p^i a_i + 1)[\tilde C_i]
$$
so it is primitive.

Let us see that we can ensure that $\gcd(pa_1+1, p^2a_2+1)=1$.
Take any $a_2 \in \ZZ$  and write 
$$
p^2a_2 +1 = \prod_{j=1}^l q_j^{m_j}
$$
with $q_j$ different primes. The condition $\gcd(pa_1+1, p^2a_2+1)=1$ is equivalent to the condition 
that $q_j$ does not divide $pa_1 + 1$ for all $j$. Since $q_j$ and $p$ are coprime, by Bezout's identity 
we have $1+ \a p= \b q_j$ for some $\a, \b \in \ZZ$.
In fact, all the numbers $\a, \b$ satisfying that condition are of the form 
$(\a,\b)=(\a_j + t q, \b_j +tp), t \in \ZZ$.
Applying this to $q_1, \ldots, q_l$ we get that the condition is that $a_1$
does not belong to the set 
 \begin{equation}\label{eqn:A}
 A=\{\a_1 + tq_1\, | \, t \in \ZZ\} \cup \dots \cup \{\a_l + t q_l\, |\, t \in \ZZ\}.
 \end{equation}
The set $A$ above is a union of arithmetic progressions of ratios $q_1, \dots, q_l$ which are
different primes. By the chinese remainder theorem there is $\alpha \pmod{\prod q_i}$
so that $\alpha\equiv \alpha_i \pmod q_i$ for all $i$. Therefore the set $\ZZ-A$ modulo $\prod q_i$
contains $\phi(\prod q_i)=\prod (q_i-1)$ elements. In particular it is infinite and of positive density.

The conclusion is that possible choices of $a_1, \dots , a_{12}$ 
consist of choosing freely $a_2, \ldots , a_{12} \in \ZZ$, and then choose $a_1 \in \ZZ - A$.

\begin{proposition}
If $p=2$ then $M$ is spin. If $p>2$ then we can arrange $c_1(B)$ and $b_i$ so that $M$ is spin or non-spin.
\end{proposition}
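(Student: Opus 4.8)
The plan is to exploit the K-contact splitting $TM=\RR\,\xi\oplus\SD$, where $\RR\,\xi$ is the trivial real line bundle spanned by the Reeb field and $\SD=\ker\eta$ carries the complex structure $J$. Then $w(M)=w(\SD)$, and since $\SD$ is a complex rank-$2$ bundle we get $w_2(M)=c_1(\SD)\bmod 2\in H^2(M,\ZZ/2)$. As $d\pi$ identifies $\SD$, as a complex bundle, with $\pi^*$ of the orbifold tangent bundle of $X'$, we have $c_1(\SD)=\pi^*c_1^{\orb}(X')$, with $c_1^{\orb}(X')=c_1(X)-\sum_{i=1}^{12}(1-\tfrac{1}{m_i})[\tilde C_i]$ (writing $\tilde C_{12}:=\tilde G$). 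I would introduce the integral classes $\g_i:=\pi^*\!\big(\tfrac{1}{m_i}[\tilde C_i]\big)\in H^2(M,\ZZ)$, so that $\pi^*[\tilde C_i]=m_i\g_i$.

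Two computations drive the argument. First, using $c_1(X)=\sum_{k=1}^{11}[\tilde C_k]-3[\tilde G]$ (equivalently $K_X=3[\tilde G]-\sum_k[\tilde C_k]$, as computed in Section \ref{sec:5}), the expansion
\[
c_1(\SD)=\pi^*c_1(X)-\sum_{i=1}^{12}(m_i-1)\g_i=\sum_{i=1}^{11}\g_i+(1-4p^{12})\g_{12}
\]
reduces mod $2$, since $1-4p^{12}$ is odd, to the clean formula $w_2(M)=\sum_{i=1}^{12}\g_i$ in $H^2(M,\ZZ/2)$. Second, the Euler class of the Seifert bundle pulls back to zero: from $c_1(M/X')=c_1(B)+\sum_i\frac{b_i}{m_i}[\tilde C_i]$, $\pi^*c_1(M/X')=0$, and $c_1(B)=\sum_i a_i[\tilde C_i]$, one obtains the relation
\[
\sum_{i=1}^{12}(a_im_i+b_i)\,\g_i=0 \qquad\text{in } H^2(M,\ZZ).
\]

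For $p=2$ this already closes the case using only the relation: the condition $\gcd(b_i,m_i)=1$ forces every $b_i$ odd while each $a_im_i$ is even, so every coefficient $a_im_i+b_i$ is odd and reducing the relation mod $2$ gives $\sum_i\g_i\equiv 0$, i.e.\ $w_2(M)=0$; hence $M$ is spin for every admissible choice. For $p>2$ all $m_i$ are odd, so the reduced relation is $\sum_i(a_i+b_i)\g_i\equiv 0$. Choosing the $a_i,b_i$ with every $a_i+b_i$ odd makes this relation coincide with $w_2(M)=\sum_i\g_i$, forcing $w_2(M)=0$. This is compatible with $\gcd(b_i,p)=1$ and the primitivity condition $\gcd(pa_1+1,p^2a_2+1)=1$ analysed above, which for odd $p$ leave both parities of each $a_i+b_i$ available (either by choosing $a_1\in\ZZ-A$ of suitable parity, or by switching some $b_i$ between $1$ and $2$).

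It remains to produce a non-spin $M$ when $p>2$, for which I would show that $w_2(M)=\sum_i\g_i$ is nonzero as soon as some $a_i+b_i$ is even. Since $m_i$ is odd we have $\g_i\equiv\pi^*[\tilde C_i]\bmod 2$, hence the conceptual identity $w_2(M)=\bar\pi^*\,w_2(X)$ for the underlying map $\bar\pi:M\to X$, with $w_2(X)=\sum_{i=1}^{12}[\tilde C_i]\bmod 2\neq 0$. The main obstacle, and the only nonformal point, is to compute $\ker\bar\pi^*$ on $H^2(\,\cdot\,,\ZZ/2)$, i.e.\ to rule out any mod-$2$ relation among the $\g_i$ other than the reduced one; equivalently, to know the $\g_i$ generate $H^2(M,\ZZ)$ with the single primitive relation above. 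I would obtain this from the Gysin sequence of the honest circle bundle underlying $M$, whose integral Euler class is the primitive class $e=c_1(M/e^{2\pi i/\mu})=\sum_i p^{12-i}(p^ia_i+b_i)[\tilde C_i]$ of Theorem \ref{thm:12}(3): as $\mu=p^{12}$ is odd the relevant fibre map has odd degree and does not change the mod-$2$ kernel, so $\ker\bar\pi^*=\langle e\bmod 2\rangle$, which is $1$-dimensional because $e$ is primitive. Since $e\equiv\sum_i(a_i+b_i)[\tilde C_i]\bmod 2$, we conclude $w_2(M)=0$ exactly when $w_2(X)=e\bmod 2$, that is when all $a_i+b_i$ are odd; any admissible choice violating this yields a non-spin $M$.
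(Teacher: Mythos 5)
Your proposal is correct in substance but takes a genuinely different route from the paper. The paper's proof is essentially a citation: it invokes Proposition 13 and formulas (2) and (3) of \cite{MT2} to obtain both the expression for $w_2(M)$ and the kernel of $\pi^*$ on mod-$2$ cohomology, and then only has to decide when $w_2(X)$ lies in that kernel. You instead derive everything from scratch: $w_2(M)$ comes from the splitting $TM=\RR\,\xi\oplus\SD$ together with $c_1(\SD)=\pi^*c_1^{\orb}(X')$ and the (correct) computation $c_1(X)=\sum_{k\leq 11}[\tilde C_k]-3[\tilde G]$ (indeed $\sum_k c_k-3d=3h-\sum e_i$), while the kernel of $\bar\pi^*$ mod $2$ comes from the factorization $M\to M/(\ZZ/\mu)\to X$, the ordinary Gysin sequence, and the odd-degree transfer. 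This is self-contained where the paper is not, and it explains both MT2 formulas at once: your single identity $w_2(M)=\sum_i \g_i \bmod 2$ specializes to $\bar\pi^*w_2(X)$ when all $m_i$ are odd, and to the $p=2$ statement otherwise. Your final criterion (spin if and only if all $a_i+b_i$ are odd) is exactly the paper's condition that $c_1(B)+\sum b_i[\tilde C_i]$ be congruent to $\sum[\tilde C_i]$ modulo $2$.

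There is one step you assert rather than prove, and it is the one place where the argument could leak: the \emph{integral} relation $\sum_i(a_im_i+b_i)\g_i=0$ in $H^2(M,\ZZ)$ (and, similarly, the integral validity of $c_1(\SD)=\bar\pi^*c_1(X)-\sum_i(m_i-1)\g_i$, since the orbifold Chern class identity is a priori an identity in $H^2(X,\QQ)$). What your own Gysin argument gives for free is only $\bar\pi^*e=0$, i.e.\ $\mu\cdot\sum_i(a_im_i+b_i)\g_i=0$, so a priori the relation holds up to $\mu$-torsion; and for $p=2$ one has $\mu=2^{12}$, so the mod-$2$ reduction of a $\mu$-torsion class need not vanish --- precisely in the case where you need the relation on the nose. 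The gap is easily closed in the situation at hand: by construction (see (\ref{eqn:homol2}) and Theorem \ref{thm:12}) the Seifert bundle has $H_1(M,\ZZ)=0$, so by the universal coefficient theorem $H^2(M,\ZZ)\cong\Hom(H_2(M,\ZZ),\ZZ)\cong\ZZ^{11}$ is torsion-free; hence $\mu$-torsion classes vanish and rational identities between integral classes hold integrally, after which all your mod-$2$ reductions are legitimate. You should state this explicitly, since without it both the $p=2$ case and the spin half of the $p>2$ case are unjustified; the remaining ingredients (transfer injectivity for odd $\mu$, the mod-$2$ Gysin kernel $\langle e\bmod 2\rangle$, primitivity of $e$ giving $e\not\equiv 0$, and the parity bookkeeping for the admissible $a_i,b_i$) are sound.
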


\begin{proof}
If $p=2$, then \cite[Proposition 13]{MT2} says that the map $\pi^*:H^2(X,\ZZ_2) \to H^2(M,\ZZ_2)$ is zero,
since the $[\tilde C_i]$ are in $\ker \pi^*$ and they span the cohomology. By formula (3) in \cite{MT2}, we have
 $w_2(M)=\pi^*(w_2(X)+\sum b_i [\tilde C_i]+c_1(B))=0$.
 
 If $p>2$, then \cite[Proposition 13]{MT2} says that the map $\pi^*:H^2(X,\ZZ_2) \to H^2(M,\ZZ_2)$ 
 has one-dimensional kernel spanned by $c_1(B)+ \sum b_i [\tilde C_i]$. Formula (2) in \cite{MT2}
 says that $w_2(M)=\pi^*(w_2(X))$. As $X=\CP^2\#11\overline{\CP}{}^2$, $w_2(X)=H+E_1+\ldots + E_{11}=
 \tilde C_1+\ldots+\tilde C_{12} \pmod{2}$. The manifold $M$ is spin or non-spin according to 
 whether $c_1(B)+ \sum b_i [\tilde C_i]$ is proportional to $\sum [\tilde C_i]$ or not$\pmod2$. 
 We can arrange that $M$ is non-spin by taking say $a_2$ odd. To get $M$ spin we need to take
 all $a_2,\ldots, a_{12}$ even, and then choose $a_1\in \ZZ-A'$, where $A'$ is defined
 as (\ref{eqn:A}) but including also $q_0=2$, $\a_0=0$ (note that all $q_j$ are odd in this case).
\end{proof}

\begin{remark}\label{rem:Roger}
We end up with the proof that the symplectic manifold produced in \cite{MRT} does not admit a complex structure.
That manifold $Z$ is constructed as follows: take the $4$-torus $\TT^4$ with coordinates $(x_1,x_2,x_3,x_4)$, 
and take $2$-tori $T_{12},T_{13},T_{14}$ along the directions $(x_1,x_2),(x_1,x_3),(x_1,x_4)$, respectively. We
arrange these $2$-tori to be disjoint and make them symplectic. Now perform Gompf connected sums \cite{Gompf}
with $3$-copies of the rational elliptic surface $E(1)$ along a fiber $F$. This produces 
 $$
 Z'=\TT^4\#_{T_{12}=F} E(1)\#_{T_{13}=F} E(1)\#_{T_{14}=F} E(1).
 $$
Now we blow-up twice to get $Z=Z'\# 2\overline{\CP}{}^2$. Then $Z$ is simply connected and 
it has $b_2(Z)=36$ and contains $36$ disjoint surfaces,
of which $31$ of genus $1$ and negative self-intersection, and two of genus $2$ and three of genus $3$, all of 
positive self-intersection (see Theorem 23 in \cite{MRT}). Then $b_2^+(Z)=5$ and $b_2^-(Z)=31$.

Suppose that $Z$ admits a complex structure. First, for a complex manifold $b_2^+=1+p_g$ and $b_2^-=h^{1,1}+p_g-1$,
thus $b_2^- \geq b_2^+ -1$, as $h^{1,1}\geq 1$. 
This implies that the orientation of $Z$ has to be the same as a complex manifold. Also $p_g=4$
and $h^{1,1}=28$. Using Noether's formula,
 $$
  \frac{K_Z^2+c_2}{12}=\chi(\cO_Z)=1-q+p_g=5.
   $$
As $c_2=\chi(Z)=38$, we get $K_Z^2=22$. As $K_Z^2>9$, the Enriques classification \cite[page 188]{BPV} implies that $Z$ is
of general type.

Next we use the Seiberg-Witten invariants of $Z$. For a minimal surface $X$ of general type, the only Seiberg-Witten basic classes
are $\pm K_X$ (see Proposition 2.2 in \cite{FM}). If $Z$ is the blow-up of $X$ at $s$ points, then the basic classes of $Z$
are $\kappa_Z=\pm K_X \pm E_1 \pm \ldots \pm E_s$, where $E_1,\ldots, E_s$ are the exceptional divisors. Note that
$\kappa_Z^2=K_X^2-s=K_Z^2=22$. 

Now we compute the Seiberg-Witten basic classes of $Z$.
The only Seiberg-Witten basic class of $\TT^4$  is $\kappa=0$. The Seiberg-Witten basic classes of a Gompf connected sum
along a torus can be found in \cite[Corollary 15]{Park}. Using the relative Seiberg-Witten invariant of $E(1)$ in \cite[Theorem 18]{Park},
we have that the Seiberg-Witten invariants satisfy
 $$
  SW_{X\#_F E(1)}=SW_X\cdot (e^F+ e^{-F}),
  $$ 
for a $4$-manifold $X$. Therefore the basic
classes of $Z'$ are only $\kappa'=\pm T_{12}\pm T_{13}\pm T_{14}$.
When blowing-up, the basic classes of $Z$ are $\kappa_Z=\pm T_{12}\pm T_{13}\pm T_{14} \pm E_1\pm E_2$. Then
$\kappa_Z^2=-2$, which is a contradiction.
\end{remark}


\begin{thebibliography}{33}

\bibitem{ABCKT} \textsc{J.~Amor\'os, M.~Burger, K.~Corlette, D.~Kotschick, D.~Toledo}, 
{\it Fundamental groups of compact K\"ahler manifolds}, Math.\ Surveys and Monographs \textbf{44}, Amer.\ Math.\ Soc., 1996.

\bibitem{ACGH} \textsc{E. Arbarello, M. Cornalba, P. Griffiths, J. Harris}, 
{\it Geometry of Algebraic Curves, Volume I}. Springer-Verlag, 1985.

\bibitem{Barden} \textsc{Barden}, \emph{Simply Connected Five-Manifolds}, Annals of Mathematics, 1965.

\bibitem{BPV} \textsc{W. Barth, C. Peters, A. Van de Ven}, {\it Compact Complex Surfaces}, Springer, 1984.

\bibitem{BFMT} \textsc{I. Biswas, M. Fern\'andez, V. Mu\~noz, A. Tralle},  
{\it On formality of orbifolds and Sasakian manifolds}, J. Topology, 9 (2016), 161-180.

\bibitem{BG} \textsc{C. Boyer, K. Galicki}, {\it Sasakian Geometry}, Oxford Univ. Press, 2007.

\bibitem{Silva2} \textsc{A. Cannas da Silva}, {\it Lectures on Symplectic Geometry}, Lecture Notes in Mathematics, 1764. Springer-Verlag, 2008.

\bibitem{CNY} \textsc{B. Cappelletti-Montano, A. de Nicola, I. Yudin}, {\it Hard Lefschetz theorem for Sasakian manifolds}, Arxiv:1306.2896.

\bibitem{CNMY} \textsc{B. Cappelletti-Montano, A. de Nicola, J.C. Marrero, I. Yudin}, {\it Examples of
compact K-contact manifolds with no Sasakian metric}, Arxiv:1311.3270.


\bibitem{DGMS}
\textsc{P.~Deligne, P.~Griffiths, J.~Morgan and D.~Sullivan}, {\it Real homotopy theory of K\"ahler manifolds}, Invent.\ Math.\ {\bf 29} (1975), 245-274.

\bibitem{FM} \textsc{R. Friedman, J.  Morgan}, {\it Algebraic surfaces and Seiberg-Witten invariants}, arxiv:alg-geom/9502026.
 
\bibitem{Gay-Stip} \textsc{D. Gay, A. Stipsicz}, \emph{Symplectic surgeries and normal surface singularities}, Algebr. Geom. Topol. Volume 9, Number 4 (2009), 2203-2223

\bibitem{Gay-Mark} \textsc{D. Gay, T. Mark}, \emph{Convex plumbings and Lefschetz fibrations}, Journal of Symplectic Geometry, 2013

\bibitem{Gompf} \textsc{R. Gompf}, {\it A new construction of symplectic manifolds}, Annals of Math. (2) 142 (1995), 537-696.

\bibitem{HT} \textsc{B. Hajduk, A. Tralle}, {\it On simply-connected compact K-contact non-Sasakian manifolds}, J. Fixed Point Theory Appl. 16 (2014), 229-241.

\bibitem{HD} \textsc{A. Haefliger, Q. N. Du}, {\it Appendice: une pr\'esentation du groupe fondamental d'une orbifold}, 
Ast\'erisque {\bf 116} (1984), 98--107, Transversal structure of foliations (Toulouse, 1982). 

\bibitem{JK} \textsc{J.-B. Jun, U.K. Kim}, \emph{On 3-dimensional almost contact metric manifolds},
Kyungpook Math. J. 34 (2) (1994), 293 – 301.

\bibitem{Kollar} \textsc{J. Koll\'ar}, {\it Circle actions on simply connected $5$-manifolds}, Topology, 45 (2006) 643-672.

\bibitem{Le} \textsc{S. Lefschetz}, {\it L'analysis situs et la g\'eom\'etrie alg\'ebrique}, Gauthier-Villars (1924).

\bibitem{Lin} \textsc{Y. Lin}, {\it Lefschetz contact manifolds and odd dimensional symplectic geometry}, Arxiv:1311.1431.

\bibitem{MRT} \textsc{V. Mu\~noz, J. Rojo, A. Tralle}, {\it Homology Samale-Barden manifolds with K-contact but not Sasakian structures}, Internat.\  Math.\ Research Notices (2018), to appear.

\bibitem{MT}\textsc{V. Mu\~noz, A. Tralle}, {\it Simply connected K-contact and Sasakian manifolds of dimension 7}, Math. Z. 281 (2015), 457-470.

\bibitem{MT2}\textsc{V. Mu\~noz, A. Tralle}, {\it On the classification of Smale-Barden manifolds with Sasakian structures}, preprint, 2020.

\bibitem{Ruk} \textsc{P. Rukimbira}, \emph{Chern-Hamilton conjecture and K-contactness}, Houston J. Math, 1995.

\bibitem{Smale} \textsc{S. Smale}, \emph{On the Structure of 5-Manifolds}, Annals of Mathematics, 1962.

\bibitem{OT} \textsc{J. Oprea, A. Tralle}, Symplectic Manifolds with no Kaehler structure, 1997, Springer.

\bibitem{Park}
\textsc{D. Park}, {\it A Gluing formula for the Seiberg-Witten invariant along $T^3$}, Michigan Math. J. 50 (2002), 593-611.

\bibitem{Tievsky} \textsc{A. Tievsky}, {\em Analogues of K\"ahler geometry on Sasakian manifolds}, Ph.D.\ Thesis, MIT, 2008.

\bibitem{Thu}  \textsc{W. Thurston}, {\em The geometry and topology of $3$-manifolds}, Mimeographed Notes, Princeton University, 1979.


\end{thebibliography}
\end{document}